\newtheorem{thm}{Theorem}
\newtheorem{cor}{Corollary}
\newtheorem{lem}{Lemma}
\theoremstyle{definition}
\newtheorem{defn}{Definition}
\newtheorem{rem}{Remark}
\newcommand{\R}{\mathbb{R}}
\newcommand{\N}{\mathbb{N}}
\renewcommand{\S}{\mathcal{S}}
\newcommand{\dive}{\operatorname{div}}
\newcommand{\PV}{\operatorname{P.V.}}
\newcommand{\vep}{\varepsilon}
\newcommand{\K}{\mathcal{K}}
\begin{document}

\contributionauthor{Pablo Ra\'ul Stinga}
\affil{Department of Mathematics, Iowa State University, 396 Carver Hall, Ames, IA 50011,
United States of America, e-mail: stinga@iastate.edu}
\runningauthor{Pablo Ra\'ul Stinga}
\contributiontitle{User's guide to the fractional Laplacian and the method of semigroups}
\runningtitle{User's guide}

\abstract{The \textit{method of semigroups} is a unifying, widely applicable,
general technique to formulate and analyze fundamental aspects of fractional powers of operators $L$
and their regularity properties in related functional spaces.
The approach was introduced by the author and Jos\'e L.~Torrea in 2009 (arXiv:0910.2569v1).
The aim of this chapter is to show how the method works in the particular case of
the fractional Laplacian $L^s=(-\Delta)^s$, $0<s<1$.
The starting point is the semigroup formula for the fractional Laplacian.
From here, the classical heat kernel permits us to obtain the pointwise formula for $(-\Delta)^su(x)$.
One of the key advantages is that our technique relies on the use of heat kernels, which allows for applications
in settings where the Fourier transform is not the most suitable tool.
In addition, it provides explicit constants that are
key to prove, under minimal conditions on $u$, the validity of the pointwise limits
$$\lim_{s\to1^-}(-\Delta)^su(x)=-\Delta u(x)\quad\hbox{and}\quad\lim_{s\to0^+}(-\Delta)^su(x)=u(x).$$
The formula for the solution to the Poisson problem
$(-\Delta)^su=f$ is found through the semigroup approach as the inverse of the fractional Laplacian
$u(x)=(-\Delta)^{-s}f(x)$ (fundamental solution).
We then present the Caffarelli--Silvestre extension problem, whose
explicit solution is given by the semigroup formulas
that were first discovered by the author and Torrea. With the extension technique, an interior Harnack
inequality and derivative estimates for fractional harmonic functions can be obtained.
The classical H\"older and Schauder estimates
$$(-\Delta)^{\pm s}:C^\alpha\to C^{\alpha\mp 2s}$$
are proved with the method of semigroups in a rather quick, elegant way.
The crucial point for this will be the characterization of H\"older and Zygmund spaces
with heat semigroups.}

\keywords{method of semigroups; fractional Laplacian; extension problem; regularity estimates}
\classification{35R11; 26A33; 58J35}

\makecontributiontitle
\DOI{}

\newpage

\section{Introduction}

Fractional powers, both positive and negative, as well as complex, of linear operators appear in many areas of mathematics.
In particular, the fractional powers of the Laplacian are nowadays classical objects.
Fractional operators appear in
potential theory \cite{Berg,Bogdan,Landkof,Song-Vondracek},
probability \cite{Applebaum, Bass-Levin, Bertoin, Bochner, Bochner-book, BB, Botcher, Courrege, Molchanov-Ostrovskii},
fractional calculus and hypersingular integrals \cite{Gatto-Segovia-Vagi, Samko-hyper, Samko,Schilling,SSV},
harmonic analysis \cite{Auscher, Bochner-book,Gatto-Segovia-Vagi, Seeley, Stein-rojo,Stein},
functional analysis \cite{Balakrishnan,Grubb-regularity,Kato,Komatsu,Lions-Magenes,Yosida},
and pseudo-differential operators \cite{Eidelman,Grubb-Hormander,Jacob1,Jacob2,Jacob3, JS}.

In recent years, the fractional Laplacian or, more generally, nonlocal equations of fractional order,
gained a lot of attention from the partial differential equations research community.
It can be said that the main driving force for this has been the fundamental work of Luis A.~Caffarelli
and his collaborators, see
\cite{Caffarelli-Salsa-Silvestre, Caffarelli-Silvestre CPDE, Caffarelli-Silvestre-ARMA,
Caffarelli-Silvestre-CPAM, Caffarelli-Silvestre-Annals, Caffarelli-Vasseur, Silvestre Thesis, Silvestre CPAM}, just to mention a few.

To introduce the notion of fractional Laplacian, let $u$ be a function in the Schwartz class $\mathcal{S}=\S(\R^n)$, $n\geq1$.
The Fourier transform of $u$, denoted by $\widehat{u}$, is also in $\S$. For the Laplacian $-\Delta$ on $\R^n$ we have
$$\widehat{(-\Delta)u}(\xi)=|\xi|^2\widehat{u}(\xi)\quad\hbox{for every}~\xi\in\R^n.$$
The fractional Laplacian $(-\Delta)^s$, $0<s<1$, is then defined in a natural way as
\begin{equation}\label{eqFractionalLaplacian:Fouriertransformdefinition}
\widehat{(-\Delta)^su}(\xi)=|\xi|^{2s}\widehat{u}(\xi).
\end{equation}

\subsection{A few applications}

Let us begin by briefly describing problems in probability, financial mathematics, elasticity
and biology where fractional powers of differential operators appear. 

\vskip 0.2cm
\noindent\texttt{I. L\'evy processes.} Let $(X_t:t\geq0)$ be a symmetric $2s$-stable ($0<2s\leq2$) $\R^n$-valued
L\'evy process starting at $0$. By the L\'evy-Khintchine formula \cite{Applebaum,Bertoin}
the characteristic function of $X_t$ is
$\mathbb{E}(e^{i\xi\cdot X_t})=e^{-t\kappa^{2s}|\xi|^{2s}}$, $\xi\in\R^n$, $t\geq0$, 
for some positive constant $\kappa$ that for simplicity we take equal to $1$. For $u\in\mathcal{S}$ set
$T_tu(x)=\mathbb{E}(u(X_t+x))$, $x\in\R^n$, $t\geq0$.
Then, by Fubini's Theorem, $\widehat{T_tu}(\xi)=e^{-t|\xi|^{2s}}\widehat{u}(\xi)$.
Therefore, the function $v(x,t)=T_tu(x)$ solves the fractional diffusion equation
$$\begin{cases}
\partial_tv=-(-\Delta)^sv&\hbox{in}~\R^n\times(0,\infty)\\
v(x,0)=u(x)&\hbox{on}~\R^n.
\end{cases}$$

There is a Markov process corresponding to the fractional powers of the Dirichlet Laplacian $-\Delta_D$
in a smooth bounded domain $\Omega$. The process can be obtained as follows: we first kill a Wiener 
process $W$ at $\tau_\Omega$, the first exit time of $W$ from $\Omega$, and then we subordinate
the killed Wiener process using an $s$-stable subordinator $T_t$. This subordinated process has generator $(-\Delta_D)^s$,
see \cite{Song-Vondracek}.

\vskip 0.2cm
\noindent\texttt{II. Financial mathematics.} For a symmetric $2s$-stable L\'evy process $X_t$
with $X_0=x$ consider the optimal stopping time $\tau$ to maximize the function
$$u(x)=\sup_\tau\mathbb{E}\left[\varphi(X_\tau):\tau<\infty\right]$$
where $\varphi\in C_0(\R^n)$. Then $u$ is a solution to the free boundary problem
\begin{equation}\label{eq:obstacleproblem}
\begin{cases}
    u(x)\geq\varphi(x)&\hbox{in}~\R^n\\
    (-\Delta)^su(x)\geq0&\hbox{in}~\R^n \\
    (-\Delta)^su(x)=0&\hbox{in}~\{u(x)>\varphi(x)\}.
\end{cases}
\end{equation}
This obstacle problem arises as a pricing model for American options
\cite{Cont-Tankov,Silvestre Thesis,Silvestre CPAM}. 

\vskip 0.2cm
\noindent\texttt{III. Elasticity, biology.}
An equivalent formulation of the problem Antonio Signorini posed in \cite{Signorini} consists in finding the configuration of an elastic membrane in equilibrium that stays above some given \textit{thin} obstacle. In mathematical terms,
given $\varphi\in C_0(\R^n)$, the solution to the Signorini problem is the function $U=U(x,y)$, $x\in\R^n$, $y\geq0$, that satisfies
$$\begin{cases}
\partial_{yy}U+\Delta_xU=0&\hbox{in}~\R^n\times(0,\infty) \\
U(x,0)\geq\varphi(x)&\hbox{on}~\R^n \\
\partial_yU(x,0)\leq0&\hbox{on}~\R^n \\
\partial_yU(x,0)=0&\hbox{in}~\{U(x,0)>\varphi(x)\},
\end{cases}$$
see for example \cite{Caffarelli-Salsa-Silvestre,Duvaut-Lions}.
A simple observation gives an equivalent description of the problem as an obstacle problem for the fractional Laplacian.
The solution to $\partial_{yy}U+\Delta_xU=0$ with boundary data $u(x):=U(x,0)$
is given by convolution with the Poisson kernel in the upper half space:
\begin{equation}\label{eq:Poissonsemigroup}
U(x,y)=e^{-y(-\Delta_x)^{1/2}}u(x).
\end{equation}
Taking the derivative of $U$ with respect to $y$ and evaluating it at $y=0$ gives
$$\partial_yU(x,y)\big|_{y=0}=-(-\Delta_x)^{1/2}u(x).$$
Hence, the Signorini problem can be rewritten as
$$\begin{cases}
    \nonumber\partial_{yy}U+\Delta_xU=0&\hbox{in}~\R^n\times(0,\infty) \\
    \label{5}U(x,0)\geq\varphi(x)&\hbox{on}~\R^n \\
    \label{6}(-\Delta_x)^{1/2}U(x,0)\geq0&\hbox{on}~\R^n \\
    \label{7}(-\Delta_x)^{1/2}U(x,0)=0&\hbox{in}~\{U(x,0)>\varphi(x)\}.
\end{cases}$$
In other words, the Signorini problem is equivalent
to the obstacle problem \eqref{eq:obstacleproblem} for $s=1/2$ through the relation $u(x)=U(x,0)$ given by
\eqref{eq:Poissonsemigroup}.

Consider next a Signorini problem where the Laplacian $-\Delta_x$
is replaced by another partial differential operator $L$ in a domain $\Omega\subseteq\R^n$.
For example, $L$ can be the Dirichlet Laplacian $-\Delta_D$ (meaning the elastic membrane is 
kept at zero level on $\partial\Omega$) or the heat operator $\partial_t-\Delta$ (this becomes a model for
semipermeable walls, like a cell membrane on $y=0$, see \cite{Duvaut-Lions}).
The associated Poisson semigroup
$$U(x,y)=e^{-yL^{1/2}}u(x)$$
is the solution to
$$\begin{cases}
\partial_{yy}U-LU=0&\hbox{in}~\Omega\times(0,\infty)\\
U\big|_{y=0}=u&\hbox{on}~\Omega
\end{cases}$$
and satisfies
$$\partial_yU\big|_{y=0}=-L^{1/2}u.$$
Then the Signorini problem for $L$ in place of $-\Delta_x$ can be formulated
for $u=U\big|_{y=0}$ in an equivalent way as an obstacle problem for $L^{1/2}$:
$$\begin{cases}
    u\geq\varphi&\hbox{in}~\Omega \\
    L^{1/2}u\geq0&\hbox{in}~\Omega \\
    L^{1/2}u=0&\hbox{in}~\{u>\varphi\},
\end{cases}$$
see \cite{ACM, Caffarelli-Stinga, Stinga, Stinga-Torrea-SIAM}.

\vskip 0.2cm

Our list of problems above does not pretend to be exhaustive at all. Just to mention some more, there are applications in
fluid mechanics \cite{Caffarelli-Vasseur,Constantin},
fractional kinetics and anomalous diffusion \cite{Metzler-Klafter,Sokolov-Klafter-Blumen,Zaslavsky},
strange kinetics \cite{Shlesinger-Zaslavsky-Klafter},
fractional quantum mechanics \cite{Laskin 1,Laskin 2},
L\'evy processes in quantum mechanics \cite{Petroni-Pusterla},
plasmas \cite{Allen},
electrical propagation in cardiac tissue \cite{Bueno-Orovio},
and biological invasions \cite{Bere}.

\subsection{The method of semigroups}

Consider the situation where we have derived a model (usually a nonlinear PDE problem) that involves a fractional power of some 
partial differential operator $L$. As we saw before, $L$ can be a Laplacian or a heat operator, or even an operator
on a manifold \cite{Banica, DeNapoli-Stinga} or a lattice in the case of discrete models \cite{CRSTV}.
Then we are faced at least with the following basic questions.

\vskip 0.2cm
\noindent\textbf{(I)} \textbf{Definition and pointwise formula for fractional operators.} For a general operator $L$, classical functional analysis gives several ways to define $L^s$ according to its analytical properties. Nevertheless, a pure abstract formula is not useful to treat concrete PDE problems and a more or less explicit pointwise expression for $L^su(x)$ is needed in many cases. The starting point for the \textbf{method of semigroups}
is the formula
$$L^su=\frac{1}{\Gamma(-s)}\int_0^\infty\big(e^{-tL}u-u\big)\,\frac{dt}{t^{1+s}}\quad0<s<1$$
where $\Gamma(-s)=\frac{\Gamma(1-s)}{-s}$ is the Gamma function evaluated at $-s$. Here
$v=e^{-tL}u$ is the \textbf{heat diffusion semigroup} generated by $L$ acting on $u$, namely, $v$ the solution to the heat equation for $L$
with initial temperature $u$:
$$\begin{cases}
\partial_tv=-Lv&\hbox{for}~t>0\\
v\big|_{t=0}=u.
\end{cases}$$
The semigroup formula for $L^s$ is classical, see \cite{Balakrishnan, Kato, Komatsu, Yosida}.
The definition is motivated by the numerical identity
\begin{equation}\label{eq:numericalpositive}
\lambda^s=\frac{1}{\Gamma(-s)}\int_0^\infty(e^{-t\lambda}-1)\,\frac{dt}{t^{1+s}}\quad\hbox{for any}~\lambda\geq0
\end{equation}
that can be easily checked with a simple change of variables.
In a similar way, starting from the numerical identity
\begin{equation}\label{eq:numericalnegative}
\lambda^{-s}=\frac{1}{\Gamma(s)}\int_0^\infty e^{-t\lambda}\,\frac{dt}{t^{1-s}}\quad\hbox{for any}~\lambda>0,~s>0
\end{equation}
we can write down the solution to $L^su=f$ as
$$u=L^{-s}f=\frac{1}{\Gamma(s)}\int_0^\infty e^{-tL}f\,\frac{dt}{t^{1-s}}.$$
Again, this semigroup formula for $L^{-s}$ is classical, see \cite{Balakrishnan, Kato, Komatsu, Yosida}.
It turns out these are quite concrete and useful ways of defining and understanding fractional operators.
Indeed, when a heat kernel is available for the semigroup $e^{-tL}$, then pointwise formulas
for both positive and negative powers of $L$ can be obtained, see
\cite{Betancor, BDS, Caffarelli-Stinga, FiveGuys, CRS, CRSTV, Marta-Torrea, DeNapoli-Stinga,
Feo-Stinga-Volzone, Maldonado-Stinga, Roncal-Stinga, Stinga, Stinga-Torrea CPDE,
Stinga-Torrea-SIAM, Stinga-Torrea JFA, Stinga-Volzone}. For degenerate cases like the usual derivative
or discrete derivatives see \cite{Abadias,Bernardis}.

In this chapter we will explain how these formulas work
only for the case of $L=-\Delta$, as developed in \cite{Stinga,Stinga-Torrea CPDE}.
Sections \ref{sec:positive} and \ref{sec:negative}
are devoted to show how the semigroup definitions of $(-\Delta)^s$ and $(-\Delta)^{-s}$ follow from
the above-mentioned numerical formulas and how, with the help of the classical heat semigroup kernel, one can
obtain the well-known nonlocal pointwise formula
$$(-\Delta)^su(x)=c_{n,s}\PV\int_{\R^n}\frac{u(x)-u(z)}{|x-z|^{n+2s}}\,dz$$
and similarly for $(-\Delta)^{-s}f(x)$.
Obviously, these formulas are very well-known \cite{Landkof,Stein} and can be deduced through several other techniques.
Nevertheless, we present the semigroup ideas in this simple case so the reader can use them in other applications.
\vskip 0.2cm
\noindent\textbf{(II)} \textbf{The nonlocal nature.} 
The fractional Laplacian is a nonlocal operator. Indeed, the value of $(-\Delta)^su(x)$ for a given $x\in\R^n$ depends on the values of $u$ at infinity. 
Also, in general, if $u$ has compact support then $(-\Delta)^su$ has noncompact support.
This basic property may create some issues. For example, the classical local PDE methods from the calculus of variations
based on integration by parts and localization using test functions cannot be directly applied to the study of nonlinear problems for $(-\Delta)^s$. Even the notion of viscosity solution needs to be redefined to take into account the values of solutions at infinity \cite{Caffarelli-Silvestre-CPAM}. L.~A.~Caffarelli and L.~Silvestre showed in \cite{Caffarelli-Silvestre CPDE} that any fractional power of the Laplacian can be characterized as an operator that maps a Dirichlet boundary condition to a Neumann-type condition via an extension PDE problem. From a probabilistic point of view, the extension problem corresponds to the property that all symmetric stable processes can be obtained as traces of degenerate Bessel diffusion processes, see \cite{Molchanov-Ostrovskii}.
Consider the function $U=U(x,y):\R^n\times[0,\infty)\to\R$ that solves the degenerate elliptic boundary value problem
$$\begin{cases}
\Delta_x U+\frac{a}{y}~U_y+U_{yy}=0&x\in\R^n,~y>0\\
U(x,0)=u(x)&x\in\R^n
\end{cases}$$
where $a=1-2s$. Then, for any $x\in\R^n$,
$$-\lim_{y\to0^+}y^{1-2s}U_y(x,y)=c_s(-\Delta)^su(x)$$
see \cite{Caffarelli-Silvestre CPDE}. The constant $c_s>0$ was computed explicitly for the first time in \cite{Stinga,Stinga-Torrea CPDE}.
We can interpret this result as saying that the new variable $y$ added to extend $u$ to the upper half space through $U$ encodes the values of $u$ at infinity needed to compute $(-\Delta)^su$. The extension problem localizes the fractional Laplacian: it is enough to know $U$ in some upper half ball around $(x,0)$ to already get $(-\Delta)^su(x)$. The nonlinear problems for the nonlocal fractional Laplacian can then be localized by adding a new variable. Now one can exploit the classical PDE tools and ideas that are available
for these equations \cite{Fabes}. The work of Caffarelli and Silvestre \cite{Caffarelli-Silvestre CPDE} presented applications
to Harnack inequalities and monotonicity formulas for $(-\Delta)^s$ by applying such local PDE techniques
in the extension problem.
Since then, \cite{Caffarelli-Silvestre CPDE} has created an explosion of results
on problems with fractional Laplacians, see \cite{Caffarelli-Salsa-Silvestre,Caffarelli-Vasseur} for a couple of important examples.

In general, fractional power operators $L^s$ are nonlocal operators. It would be very useful in applications to have an analogous to the
Caffarelli--Silvestre characterization for $L^s$ as a Dirichlet-to-Neumann map via an extension problem. This open problem
was solved in \cite{Stinga, Stinga-Torrea CPDE}. The author and Torrea discovered 
an extension problem for fractional operators on Hilbert spaces. Later on, J.~E.~Gal\'e, P.~J.~Miana and the author
found an extension problem characterization for fractional powers of operators in Banach spaces and,
more generally, generators of integrated semigroups, see \cite{Gale-Miana-Stinga}. In addition, \cite{Gale-Miana-Stinga}
included the case of complex fractional power operators.
The semigroup point of view turned out to be fundamental. As a matter of fact, when $L=-\Delta$ in
\cite{Gale-Miana-Stinga, Stinga, Stinga-Torrea CPDE} then one recovers the extension PDE of \cite{Caffarelli-Silvestre CPDE}.
Some of the main novelties of \cite{Gale-Miana-Stinga, Stinga, Stinga-Torrea CPDE} were the analysis
of the extension PDE by means of Bessel functions and the explicit semigroup formulas for the solution
\begin{align*}
U(y) &= \frac{y^{2s}}{4^s\Gamma(s)}\int_0^\infty e^{-y^2/(4t)}e^{-tL}u\,\frac{dt}{t^{1+s}} \\
&= \frac{1}{\Gamma(s)}\int_0^\infty e^{-y^2/(4t)}e^{-tL}(L^su)\,\frac{dt}{t^{1-s}}.
\end{align*}
These were new even for the case of the fractional Laplacian. As it could be expected, these general extension problems
found many applications such as free boundary problems \cite{Allen, ACM},
fractional derivatives \cite{Bernardis}, master equations \cite{BDS,Marta-Torrea,Stinga-Torrea-SIAM},
fractional elliptic PDEs \cite{Caffarelli-Stinga,Stinga-Zhang,Yu},
fractional Laplacians on manifolds \cite{Banica, Caffarelli-Sire, Chamorro, DeNapoli-Stinga, Ferrari-Franchi}
and in infinite dimensions \cite{Novaga},
symmetrization \cite{DiBlasio-Volzone,Feo-Stinga-Volzone}, nonlocal Monge--Amp\`ere equations \cite{Maldonado-Stinga},
numerical analysis \cite{Nochetto}, biology \cite{Stinga-Volzone} and inverse problems \cite{Garcia,Ghosh}.

On the other hand, an extension problem for higher powers of fractional operators in Hilbert spaces
using heat semigroups was proved in \cite{Roncal-Stinga}, see also \cite{Yang} for
the particular case of the fractional Laplacian on $\R^n$.
The fractional powers of the Laplacian can also be characterized by means of a wave extension problem,
see \cite{Kemppainen-Sjogren-Torrea}. In such scenario 
the wave and Schr\"odinger groups (instead of the heat semigroup) play a key, fundamental role.

We will not go into more details about all these general cases here,
but we will only show how the semigroup ideas, techniques and formulas of \cite{Gale-Miana-Stinga,Stinga,Stinga-Torrea CPDE}
work for the extension problem in the fractional Laplacian case, see Section \ref{sec:extension}.
Applications to Harnack inequalities and derivative estimates for $s$-harmonic functions
are given in Section \ref{sec:Harnack} by following \cite{Caffarelli-Salsa-Silvestre,Caffarelli-Silvestre CPDE}.

\vskip 0.2cm
\noindent\textbf{(III)} \textbf{Regularity theory for fractional operators.} 
Clearly the Fourier transform definition of the fractional Laplacian does not seem to be the most useful formulation
to prove regularity estimates in H\"older and Zygmund spaces. One strategy that has been followed
for this problem is to make heavy use of the pointwise formulas for $(-\Delta)^s$ and $(-\Delta)^{-s}$,
see, for example, \cite{Silvestre Thesis, Silvestre CPAM}. This is only natural as pointwise formulas clearly allow us to
handle differences of the form $|(-\Delta)^su(x_1)-(-\Delta)^su(x_2)|$.

We present here a semigroup method towards proving regularity estimates, where only
the semigroup formulas for the fractional operators are needed. We will first show that H\"older
and Zygmund spaces are characterized by means of the growth of time derivatives of
the heat semigroup $\partial_t^ke^{t\Delta}$,
see Section \ref{sec:spaces}. The proof of such characterization is obviously nontrivial.
It will be shown in Section \ref{sec:regularity} how the semigroup descriptions of
H\"older--Zygmund spaces and fractional Laplacians allow for a quick, elegant proof of H\"older and Schauder estimates.
We believe this is the first time these results have been presented and proved in such a systematic, complete way
for the case of the fractional Laplacian.

If we now think about fractional powers of other differential operators $L$,
we may ask for the ``right'' Schauder estimates for $L^s$.
More precisely, what is the proper/adapted H\"older space to look for regularity properties of $L^s$?
The semigroup approach then comes at hand: one can define
regularity spaces associated to $L$ in terms of the growth of heat semigroups $\partial_t^ke^{-tL}$
in complete analogy to the case of the classical H\"older--Zygmund spaces.
As we mentioned, passing from a semigroup formulation to a pointwise description
of such spaces is a nontrivial task that must be carefully handled in each particular situation.
Despite this, the great advantage is that
the regularity properties of fractional powers $L^s$ on these spaces will follow at once using the semigroup representations.
See, for example, \cite{Caffarelli-Stinga} for the fractional Laplacian,
\cite{Ma-Stinga-Torrea-Zhang, Stinga-Torrea JFA} for Schr\"odinger operators $L=-\Delta+V$,
\cite{Gatto-Urbina,Liu-Sjogren} for the Ornstein--Uhlenbeck operator $L=-\Delta+\nabla\cdot x$,
\cite{BDS,Marta-Torrea,Stinga-Torrea-SIAM} for fractional powers of parabolic operators,
\cite{Roncal-Stinga} for the fractional Laplacian on the torus, and
\cite{Betancor} for Bessel operators and radial solutions to the fractional Laplacian.

\vskip 0.2cm
As we said before, the fractional Laplacian is a classical object in mathematics, and many of the results
we will present here can be proved in several different ways and with other techniques.
An exhaustive list of classical and modern references dealing with them is out of the scope
of this chapter and the reader is invited to explore the references mentioned at the beginning of this section
as well as those contained in other chapters of this volume.
 
\section{Fractional Laplacian: semigroups, pointwise formulas
and limits}\label{sec:positive}

Recall the Fourier transform definition of the fractional Laplacian given in 
\eqref{eqFractionalLaplacian:Fouriertransformdefinition}.
It is obvious that
$(-\Delta)^0u=u$, $(-\Delta)^1u=-\Delta u$ and, for any $s_1,s_2$ we have
$(-\Delta)^{s_1}\circ(-\Delta)^{s_2}u=(-\Delta)^{s_1+s_2}u$.
Even though $|\xi|^{2s}\widehat{u}(\xi)$ is a well defined function of $\xi\in\R^n$, we still have
$$(-\Delta)^su\notin\S$$
because $|\xi|^{2s}$ creates a singularity at $\xi=0$.
On the other hand, \eqref{eqFractionalLaplacian:Fouriertransformdefinition} implies that 
for any multi-index $\gamma\in\N_0^n$,
\begin{equation}\label{eqFractionalLaplacian:commute}
D_x^\gamma(-\Delta)^s=(-\Delta)^sD_x^\gamma.
\end{equation}
In particular, if $u\in\S$ then $(-\Delta)^su\in C^\infty(\R^n)$.

To compute $(-\Delta)^su(x)$ for each point $x\in\R^n$
one could try to take the inverse Fourier transform in \eqref{eqFractionalLaplacian:Fouriertransformdefinition}.
In fact, since $|\xi|^{2s}\widehat{u}(\xi)\in L^1(\R^n)$, one can make sense to
$(-\Delta)^su(x)=\mathcal{F}^{-1}(|\xi|^{2s}\widehat{u}(\xi))(x)$.
But here we are going to avoid this and, instead, apply the method of semigroups.
If we choose $\lambda=|\xi|^2$, for $\xi\in\R^n$, in the numerical formula \eqref{eq:numericalpositive},
multiply it by $\widehat{u}(\xi)$ and recall \eqref{eqFractionalLaplacian:Fouriertransformdefinition}, then
$$\widehat{(-\Delta)^su}(\xi)=|\xi|^{2s}\widehat{u}(\xi)=\frac{1}{\Gamma(-s)}\int_0^\infty(e^{-t|\xi|^2}\widehat{u}(\xi)-\widehat{u}(\xi))\,\frac{dt}{t^{1+s}}.$$
Thus, by inverting the Fourier transform,
we obtain the \textbf{semigroup formula for the fractional Laplacian}
(see \cite{Balakrishnan,Kato,Komatsu,Yosida}, also \cite{Gale-Miana-Stinga,Stinga, Stinga-Torrea CPDE})
\begin{equation}\label{eqFractionalLaplacian:semigroupformula}
(-\Delta)^su(x)=\frac{1}{\Gamma(-s)}\int_0^\infty\big(e^{t\Delta}u(x)-u(x)\big)\,\frac{dt}{t^{1+s}}.
\end{equation}

The family of operators $\{e^{t\Delta}\}_{t\geq0}$ is the classical heat diffusion semigroup generated by $\Delta$.
Consider the solution $v=v(x,t)$, for $x\in\R^n$ and $t\geq0$, of the heat equation on the whole space $\R^n$
with initial temperature $u$:
$$\begin{cases}
\partial_tv=\Delta v&\hbox{for}~x\in\R^n,~t>0\\
v(x,0)=u(x)&\hbox{for}~x\in\R^n.
\end{cases}$$
If we apply the Fourier transform in the variable $x$ for each fixed $t$ then
\begin{equation}\label{eq:heat semigroup pre Fourier}
\widehat{v}(\xi,t)=e^{-t|\xi|^2}\widehat{u}(\xi)=\widehat{e^{t\Delta}u}(\xi)
\end{equation}
so that $u\longmapsto e^{t\Delta}u$ is the solution operator.
It is well known that
$$v(x,t)\equiv e^{t\Delta}u(x)=G_t\ast u(x)=\int_{\R^n}G_t(x-z)u(z)\,dz$$
where $G_t(x)$ is the \textbf{Gauss--Weierstrass heat kernel}:
\begin{equation}\label{eqFractionalLaplacian:heatkernel}
G_t(x)=\frac{1}{(4\pi t)^{n/2}}e^{-|x|^2/(4t)}.
\end{equation}
Observe that $G_t$ defines an approximation
of the identity. Moreover,
\begin{equation}\label{eqFractionalLaplacian:integral heat kernel}
e^{t\Delta}1(x)=\int_{\R^n}G_t(x)\,dx\equiv1\quad\hbox{for any}~x\in\R^n,~t>0.
\end{equation}

\begin{rem}[Maximum principle]
The semigroup formula \eqref{eqFractionalLaplacian:semigroupformula} and the positivity of the 
heat kernel \eqref{eqFractionalLaplacian:heatkernel} easily imply the maximum principle for the fractional Laplacian.
Indeed, if $u\geq0$ and $u(x_0)=0$ at some point $x_0\in\R^n$ then
$$(-\Delta)^su(x_0)=\frac{1}{\Gamma(-s)}\int_0^\infty e^{t\Delta}u(x_0)\,\frac{dt}{t^{1+s}}\leq0.$$
Moreover, $(-\Delta)^su(x_0)=0$ if and only if $e^{t\Delta}u(x_0)=0$, that is, only when $u\equiv0$.
For another proof using pointwise formulas, see \cite{Silvestre Thesis, Silvestre CPAM}. For maximum
principles for fractional powers of elliptic operators using semigroups, see \cite{Stinga-Zhang}.
\end{rem}

The semigroup formula \eqref{eqFractionalLaplacian:semigroupformula} and the heat
kernel \eqref{eqFractionalLaplacian:heatkernel} permit us to compute the pointwise formula for the fractional Laplacian.
The technique avoids the inverse Fourier transform
in \eqref{eqFractionalLaplacian:Fouriertransformdefinition} and gives the constants explicitly.

\begin{thm}[Pointwise formulas]\label{thmFractionalLaplacian:pointwiseformulaS}
Let $u\in\S$, $x\in\R^n$ and $0<s<1$.
\begin{enumerate}[$(i)$]
\item If $0<s<1/2$ then
$$(-\Delta)^s u(x)=c_{n,s}\int_{\R^n}\frac{u(x)-u(z)}{|x-z|^{n+2s}}\,dz$$
and the integral is absolutely convergent.
\item If $1/2\leq s<1$ then, for any $\delta>0$,
\begin{align*}
(-\Delta)^s u(x) &= c_{n,s}\lim_{\vep\to0^+}\int_{|x-z|>\vep}\frac{u(x)-u(z)}{|x-z|^{n+2s}}\,dz \\
&= c_{n,s}\int_{\R^n}\frac{u(x)-u(z)-\nabla u(x)\cdot(x-z)\chi_{|x-z|<\delta}(z)}{|x-z|^{n+2s}}\,dz
\end{align*}
where the second integral is absolutely convergent.
\end{enumerate}
The constant $c_{n,s}>0$ in the formulas above is explicitly given by
\begin{equation}\label{eq:cns}
c_{n,s}=\frac{4^s\Gamma(n/2+s)}{|\Gamma(-s)|\pi^{n/2}}=\frac{s(1-s)4^s\Gamma(n/2+s)}{|\Gamma(2-s)|\pi^{n/2}}.
\end{equation}
In particular, $c_{n,s}\sim s(1-s)$ as $s\to0^+$ and $s\to1^-$.
\end{thm}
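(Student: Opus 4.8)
The plan is to start from the semigroup formula \eqref{eqFractionalLaplacian:semigroupformula} and substitute the explicit heat kernel \eqref{eqFractionalLaplacian:heatkernel}. Writing $e^{t\Delta}u(x)-u(x)=\int_{\R^n}G_t(x-z)(u(z)-u(x))\,dz$ (using \eqref{eqFractionalLaplacian:integral heat kernel} to insert the $-u(x)$ term under the integral), we get
$$(-\Delta)^su(x)=\frac{1}{\Gamma(-s)}\int_0^\infty\int_{\R^n}\frac{e^{-|x-z|^2/(4t)}}{(4\pi t)^{n/2}}\,(u(z)-u(x))\,dz\,\frac{dt}{t^{1+s}}.$$
The goal is to interchange the order of integration and carry out the $t$-integral explicitly. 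First I would fix $0<s<1/2$: there the double integral is absolutely convergent once we split $\R^n$ into $|x-z|<1$ and $|x-z|\ge1$. On the far region, $|u(z)-u(x)|$ is bounded (using $u\in\S$), and $\int_0^\infty t^{-n/2-1-s}e^{-|x-z|^2/(4t)}\,dt$ converges and equals a constant times $|x-z|^{-n-2s}$ by the change of variables $r=|x-z|^2/(4t)$; the resulting $z$-integral converges since $n+2s>n$. On the near region, using $|u(z)-u(x)|\le\|\nabla u\|_\infty|x-z|$ we gain one power of $|x-z|$, and $n+2s-1<n$ keeps the $z$-integral finite because $2s<1$. So Tonelli/Fubini applies, we swap, do the $t$-integral, and read off
$$c_{n,s}=\frac{1}{|\Gamma(-s)|}\cdot\frac{1}{(4\pi)^{n/2}}\int_0^\infty r^{n/2+s-1}e^{-r}\,dr\cdot 4^{n/2+s}\cdot\frac{1}{?}$$
— i.e. the constant is $\frac{1}{|\Gamma(-s)|(4\pi)^{n/2}}\cdot 4^{s+n/2}\Gamma(n/2+s)\cdot(\text{normalization from the substitution})$, which after bookkeeping collapses to $c_{n,s}=\dfrac{4^s\Gamma(n/2+s)}{|\Gamma(-s)|\pi^{n/2}}$ as claimed. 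The second expression for $c_{n,s}$ in \eqref{eq:cns} follows from the functional equation $\Gamma(-s)=\Gamma(1-s)/(-s)=\Gamma(2-s)/(s(s-1))$, so $|\Gamma(-s)|=|\Gamma(2-s)|/(s(1-s))$, and the asymptotics $c_{n,s}\sim s(1-s)$ are then immediate since $\Gamma(2-s)$ and $\Gamma(n/2+s)$ stay bounded away from $0$ and $\infty$ on $[0,1]$.

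For the range $1/2\le s<1$ the near-region integral $\int_{|x-z|<\delta}|u(x)-u(z)||x-z|^{-n-2s}\,dz$ need no longer converge, so the plan is to handle the two asserted reformulations. For the principal-value form, by symmetry of $\R^n\setminus B_\vep(x)$ under $z\mapsto 2x-z$ the gradient term $\nabla u(x)\cdot(x-z)$ integrates to zero over $\vep<|x-z|<R$ for every $\vep,R$; hence
$$\int_{|x-z|>\vep}\frac{u(x)-u(z)}{|x-z|^{n+2s}}\,dz=\int_{|x-z|>\vep}\frac{u(x)-u(z)-\nabla u(x)\cdot(x-z)\chi_{|x-z|<\delta}}{|x-z|^{n+2s}}\,dz,$$
and on the right the integrand is, by Taylor's theorem, $O(|x-z|^{2-n-2s})$ near $z=x$, which is integrable since $2-2s>0$; so the limit $\vep\to0^+$ exists and equals the absolutely convergent integral, proving the two displayed expressions agree. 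To identify this common value with $(-\Delta)^su(x)$, I would go back to the swapped double integral but now regularize: insert $\chi_{|x-z|>\vep}$, use that for the truncated kernel Fubini is again legitimate, perform the $t$-integral to produce $c_{n,s}\int_{|x-z|>\vep}(u(x)-u(z))|x-z|^{-n-2s}\,dz$ up to an error term coming from the cutoff of $e^{t\Delta}u(x)-u(x)$ on $[0,\infty)$ that vanishes as $\vep\to0$ — alternatively, and more cleanly, differentiate the identity for $0<s<1/2$ in $x$ (using \eqref{eqFractionalLaplacian:commute}) to bootstrap, or apply the semigroup identity to $\nabla u\in\S$ and integrate.

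The main obstacle will be the $1/2\le s<1$ case: justifying the interchange of $\int_0^\infty$ and $\int_{\R^n}$ when the naive double integral diverges at $z=x$. The cleanest route is to add and subtract the first-order Taylor term of $u$ at $x$ \emph{inside} the $t$-integral before swapping, i.e. write $e^{t\Delta}u(x)-u(x)=\int G_t(x-z)\bigl(u(z)-u(x)-\nabla u(x)\cdot(z-x)\bigr)dz$ (the linear term drops out because $\int G_t(x-z)(z-x)\,dz=0$ by oddness). Now the integrand is $O(|x-z|^2)$ near $z=x$ \emph{uniformly in} $t$ in a way that makes $\int_0^\infty\int_{|x-z|<\delta}t^{-n/2-1-s}e^{-|x-z|^2/(4t)}|x-z|^2\,dz\,dt<\infty$ (the exponent $n/2+s$ versus the two extra powers of $|x-z|$ works since $2>2s$), while on $|x-z|\ge\delta$ convergence is as before and the subtracted linear term also integrates absolutely there; applying Fubini to this regularized integrand and then doing the $t$-integral yields exactly the second formula in $(ii)$ with the same constant $c_{n,s}$, and everything is consistent. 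The remaining routine checks — that all the Gamma-function manipulations give precisely \eqref{eq:cns}, and that the error terms in any truncation argument genuinely vanish — are mechanical.
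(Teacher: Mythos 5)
Your approach is the paper's own: start from the semigroup formula \eqref{eqFractionalLaplacian:semigroupformula}, use \eqref{eqFractionalLaplacian:integral heat kernel} to write $e^{t\Delta}u(x)-u(x)$ as a single integral against the heat kernel, compute $\frac{1}{|\Gamma(-s)|}\int_0^\infty G_t(x-z)\,t^{-1-s}\,dt$ by the change of variables $r=|x-z|^2/(4t)$ to obtain $c_{n,s}|x-z|^{-n-2s}$, justify Fubini for $0<s<1/2$, and for $1/2\le s<1$ insert a linear correction that vanishes under the Gaussian before swapping. So the overall architecture, the constant computation, and the mechanism producing the $\nabla u$ term all agree with the paper's sketch.

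There is one genuine gap at the endpoint $s=1/2$. You subtract the \emph{full-space} linear term, writing $e^{t\Delta}u(x)-u(x)=\int G_t(x-z)\bigl(u(z)-u(x)-\nabla u(x)\cdot(z-x)\bigr)dz$, and assert that on $|x-z|\ge\delta$ "the subtracted linear term also integrates absolutely there." After the $t$-integration the far-field contribution of that term is $\sim\int_{|x-z|>\delta}|x-z|^{1-n-2s}\,dz\sim\int_\delta^\infty r^{-2s}\,dr$, which diverges precisely when $s=1/2$. So Fubini is not justified at $s=1/2$ with your choice of regularization, and moreover the absolutely-convergent formula you would obtain (without the cutoff $\chi_{|x-z|<\delta}$) is not the one asserted in part $(ii)$. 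The paper avoids this by using the \emph{truncated} cancellation $\int_{|z|<\delta}z_iG_t(z)\,dz=0$ (and $\int_{\vep<|z|<\delta}z_iG_t(z)\,dz=0$), valid for every fixed $t,\delta,\vep$ by radial symmetry of the Gaussian; subtracting $\nabla u(x)\cdot(z-x)\chi_{|x-z|<\delta}$ rather than the full linear term makes the double integral absolutely convergent for all $1/2\le s<1$ and yields exactly the stated formula. Replace the full-space oddness identity with these ball/annulus identities and the argument closes; for $s>1/2$ the two versions of the formula coincide by the symmetry $\int_{\delta<|z|<R}z_i|z|^{-n-2s}\,dz=0$.
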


\begin{proof}[Sketch of proof]
The detailed proof using the heat kernel can be found in \cite{Stinga, Stinga-Torrea CPDE}.
We write down the formula for $e^{t\Delta}u(x)$ in \eqref{eqFractionalLaplacian:semigroupformula}
and use \eqref{eqFractionalLaplacian:integral heat kernel} to get
\begin{equation}\label{eqFractionalLaplacian:preFubini}
(-\Delta)^su(x)= \frac{1}{|\Gamma(-s)|}\int_0^\infty\int_{\R^n}G_t(x-z)(u(x)-u(z))\,dz\,\frac{dt}{t^{1+s}}.
\end{equation}
By \eqref{eqFractionalLaplacian:heatkernel} and the change of variables $r=|x-z|^2/(4t)$,
\begin{equation}\label{eqFractionalLaplacian:kernelcomputation}
\frac{1}{|\Gamma(-s)|}\int_0^\infty G_t(x-z)\,\frac{dt}{t^{1+s}} = c_{n,s}\cdot\frac{1}{|x-z|^{n+2s}}.
\end{equation}
When $0<s<1/2$ the double integral in \eqref{eqFractionalLaplacian:preFubini}
is absolutely convergent and Fubini's theorem gives $(i)$.
If $1/2\leq s<1$ then one needs to use the fact that, for any $i=1,\ldots,n$ and $0<\varepsilon<\delta$,
$$\int_{|z|<\delta}z_iG_t(z)\,dz=\int_{\varepsilon<|z|<\delta}z_iG_t(z)\,dz=0$$
for all $t>0$, for the gradient term to appear in $(ii)$.
\end{proof}

The pointwise formulas in Theorem \ref{thmFractionalLaplacian:pointwiseformulaS}
are valid for $u\in\S$. However, the integrals are well defined for less regular functions.
As a matter of fact, we can relax the requirement on $u$ at infinity by asking that
\begin{equation}\label{eqFractionalLaplacian:integrability}
\|u\|_{L_s}:=\int_{\R^n}\frac{|u(x)|}{1+|x|^{n+2s}}\,dx<\infty.
\end{equation}
When $0<s<1/2$, we only need $u$ to be $C^{2s+\varepsilon}$ at $x$, for $2s+\varepsilon\leq1$, for the
singular part of the integral (that is, when $z$ is near $x$) to be finite:
$$\int_{|x-z|<1}\frac{|u(x)-u(z)|}{|x-z|^{n+2s}}\,dz\leq [u]_{C^{2s+\vep}(x)}\int_{|x-z|<1}\frac{|x-z|^{2s+\vep}}{|x-z|^{n+2s}}\,dz<\infty.$$
Similarly, when $1/2\leq s<1$, the requirement $u\in C^{1,2s+\varepsilon-1}$ at $x$, for $2s+\varepsilon-1\leq1$ would suffice as well. 

To define the fractional Laplacian for less regular functions,
we need to understand what is 
$(-\Delta)^s$ in the sense of distributions. The fractional Laplacian is a symmetric operator on $L^2(\R^n)$: when $u,f\in\S$,
$$\int_{\R^n}(-\Delta)^su(x)f(x)\,dx=\int_{\R^n}|\xi|^{2s}\widehat{u}(\xi)\widehat{f}(\xi)\,d\xi=\int_{\R^n}u(x)(-\Delta)^sf(x)\,dx.$$
One may then think in the following way. For $u\in\S'$
(a tempered distribution) and $f\in\S$ one could define the distribution $(-\Delta)^su$ as
$\langle(-\Delta)^su,f\rangle=\langle u,(-\Delta)^sf\rangle$.
The problem here is that $(-\Delta)^sf\notin\S$, so this identity makes no sense for $u\in\S'$.
First we need to characterize the set $(-\Delta)^s(\S)$, see \cite{CRSTV, Silvestre Thesis, Silvestre CPAM}.

\begin{lem}\label{lemFractionalLaplacian:classSs}
Let $f\in\S$. Then $(-\Delta)^sf$ belongs to the class $\S_s$ defined by
$$\S_s=\big\{\psi\in C^\infty(\R^n):(1+|x|^{n+2s})D^\gamma\psi(x)\in L^\infty(\R^n),~\hbox{for every}~\gamma\in\N_0^n\big\}.$$
\end{lem}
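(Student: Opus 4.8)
The plan is to reduce everything to a single pointwise decay estimate for the fractional Laplacian of a Schwartz function. Since $D^\gamma f\in\S$ for every multi-index $\gamma\in\N_0^n$ and, by \eqref{eqFractionalLaplacian:commute}, $D^\gamma(-\Delta)^sf=(-\Delta)^s(D^\gamma f)$, and since we already know that $(-\Delta)^sf\in C^\infty(\R^n)$, it is enough to show that for every $g\in\S$ there is a constant $C=C(n,s,g)$, depending only on finitely many Schwartz seminorms of $g$, such that
$$|(-\Delta)^sg(x)|\leq\frac{C}{1+|x|^{n+2s}}\qquad\text{for all }x\in\R^n.$$
Applying this with $g=D^\gamma f$ for each $\gamma$ then gives $(1+|x|^{n+2s})|D^\gamma(-\Delta)^sf(x)|\leq C$, which is precisely the statement $(-\Delta)^sf\in\S_s$.

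To prove the displayed bound I would use the pointwise formula of Theorem \ref{thmFractionalLaplacian:pointwiseformulaS}: the first formula when $0<s<1/2$, and the second one, with $\delta=1$, when $1/2\leq s<1$. The boundedness of $(-\Delta)^sg$, which already gives the estimate for $|x|\leq2$, is routine: split the $z$-integral into the near-diagonal part $|x-z|<1$, where Taylor's theorem bounds the numerator by $\|\nabla g\|_{L^\infty}|x-z|$ (respectively by $\tfrac12\|D^2g\|_{L^\infty}|x-z|^2$ when $s\geq1/2$), so that the integrand is $\lesssim|x-z|^{1-n-2s}$ (resp.\ $|x-z|^{2-n-2s}$), which is integrable near the origin precisely because $2s<1$ (resp.\ $2s<2$); and the far part $|x-z|\geq1$, which is controlled by $|g(x)|\int_{|w|\geq1}|w|^{-n-2s}\,dw+\|g\|_{L^1(\R^n)}$.

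The decay, for $|x|\geq2$, is where the only real care is needed, and I would split the $z$-integral according to the size of $|z|$ relative to $|x|$. On $\{|z|\leq|x|/2\}$ one has $|x-z|\geq|x|/2$ (hence no correction term), so pulling out $|x-z|^{-(n+2s)}\leq(2/|x|)^{n+2s}$ leaves an integral bounded by $|g(x)|\,|B_{|x|/2}|+\|g\|_{L^1(\R^n)}\lesssim 1$ by the Schwartz decay of $g$; this piece is $\lesssim|x|^{-(n+2s)}$. On $\{|z|\geq2|x|\}$ one has $|x-z|\geq|z|/2$, so that, using $|g(z)|\leq C_M(1+|z|)^{-M}$ for large $M$, the integral $\int_{|z|\geq2|x|}(|g(x)|+|g(z)|)\,|z|^{-(n+2s)}\,dz$ decays faster than any power of $|x|$. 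Finally, on the annulus $\{|x|/2\leq|z|\leq2|x|\}$ the point $z$ has size comparable to that of $x$, so $g$ and all its derivatives at any point of $B_1(x)\cup B_1(z)$ are $\leq C_M(1+|x|)^{-M}$; splitting this piece once more into $|x-z|<1$ (use Taylor as above, the remainder now being $\leq C_M(1+|x|)^{-M}$) and $|x-z|\geq1$ (where $\int_{1\leq|x-z|\leq3|x|}|x-z|^{-(n+2s)}\,dz\lesssim1$) shows that this region contributes $\leq C_M(1+|x|)^{-M}$, which beats $(1+|x|)^{-(n+2s)}$ for $M$ large. Adding the three contributions proves the bound. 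The main obstacle is therefore purely the bookkeeping in this last annular region, where one must simultaneously use the rapid decay of $g$ and its derivatives near $x$ and the local integrability of the kernel $|x-z|^{-(n+2s)}$; note that the gradient correction enters exactly when $s\geq1/2$, keeping the near-diagonal integral convergent.
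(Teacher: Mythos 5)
The paper does not actually prove this lemma; it is stated and immediately followed by a citation to \cite{CRSTV, Silvestre Thesis, Silvestre CPAM}, so there is no in-paper argument to compare against. Your proof is correct and is essentially the standard one found in those references: reduce, via the commutation $D^\gamma(-\Delta)^s=(-\Delta)^sD^\gamma$ of \eqref{eqFractionalLaplacian:commute} and the known smoothness of $(-\Delta)^sf$, to the single decay estimate $|(-\Delta)^sg(x)|\lesssim(1+|x|^{n+2s})^{-1}$ for $g\in\S$, and then prove that estimate from the pointwise formula of Theorem \ref{thmFractionalLaplacian:pointwiseformulaS} by splitting the $z$-integral. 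Your decomposition into $\{|z|\leq|x|/2\}$ (where $|x-z|\geq|x|/2\geq1$ kills the gradient correction, and the $\|g\|_{L^1}$ term produces exactly the borderline $|x|^{-(n+2s)}$ decay), $\{|z|\geq2|x|\}$ (rapid decay), and the annulus $\{|x|/2\leq|z|\leq2|x|\}$ (where Taylor near the diagonal and the rapid decay of $g$ and its derivatives on $B_1(x)$ give super-polynomial decay) is exactly right, and the bookkeeping of when the gradient correction $\nabla g(x)\cdot(x-z)\chi_{|x-z|<1}$ is present is handled correctly. The only thing worth making explicit, if you expand the sketch, is that for $y$ on the segment $[x,z]$ in the annular near-diagonal region you have $|y|\geq|x|-1\geq|x|/2$ (using $|x|\geq2$), which is what lets you replace $\sup_{B_1(x)}|D^2g|$ by $C_M(1+|x|)^{-M}$.
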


The class $\S_s$ of Lemma \ref{lemFractionalLaplacian:classSs} 
is endowed with the topology induced by the countable family of seminorms
$$\rho_\gamma(\psi)=\sup_{x\in\R^n}|(1+|x|^{n+2s})D^\gamma\psi(x)|\quad\gamma\in\N_0^n.$$
Denote by $\S_s'$ the dual space of $\S_s$. Observe that
$\S\subset\S_s$, so that $\S_s'\subset\S'$.
The suitable space for the distributional definition of the fractional Laplacian is $\S_s'$.

\begin{defn}
Let $u\in\S_s'$. We define $(-\Delta)^su\in\S'$ as
$$\big((-\Delta)^su\big)(f)=u((-\Delta)^sf)\quad\hbox{for every}~f\in\mathcal{S}.$$
In terms of pairings, we write $\langle (-\Delta)^su,f\rangle_{\S',\S}=\langle u,(-\Delta)^sf\rangle_{\S_s',\S_s}$.
\end{defn}

If $u\in\S$ then this distributional definition coincides with the one given in terms of the Fourier transform.
Also, $(-\Delta)^s$ maps $\S_s'$ into $\S'$ continuously.
Recall the definition of the space $L_s$ given in \eqref{eqFractionalLaplacian:integrability}.
We have $L_s=L^1_{\mathrm{loc}}(\R^n)\cap\S_s'$. The proof of the following result is based on an approximation
argument and the details can be found in \cite{Silvestre Thesis, Silvestre CPAM}.

\begin{thm}[Pointwise formula for less regular functions]\label{thmFractionaLapalcian:pointwiselessregular}
Let $\Omega$ be an open subset of $\R^n$ and let $u\in L_s$, $0<s<1$. If $u\in C^{2s+\vep}(\Omega)$ (or $C^{1,2s+\vep-1}(\Omega)$
if $s\geq1/2$) for some $\vep>0$ then $(-\Delta)^su$ is a continuous function in $\Omega$ and
$(-\Delta)^su(x)$ is given by the pointwise formulas of Theorem \ref{thmFractionalLaplacian:pointwiseformulaS},
for every $x\in\Omega$.
\end{thm}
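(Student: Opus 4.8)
The plan is to fix an arbitrary $x_0\in\Omega$, localize $u$ around $x_0$, and reduce the non-smooth pieces either to the Schwartz case of Theorem~\ref{thmFractionalLaplacian:pointwiseformulaS} (by mollification) or to an elementary tail computation. Note first that $u\in L_s=L^1_{\mathrm{loc}}(\R^n)\cap\S_s'$, so $(-\Delta)^su\in\S'$ is well defined. Pick $\rho>0$ with $\overline{B_{2\rho}(x_0)}\subset\Omega$ and $\eta\in C_c^\infty(\R^n)$ with $\eta\equiv1$ on $B_\rho(x_0)$ and $\supp\eta\subset B_{2\rho}(x_0)$. Write $u=u_1+u_2$ with $u_1=\eta u$ and $u_2=(1-\eta)u$. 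Then $u_1\in L_s$ has compact support and lies in $C^{2s+\vep}(\R^n)$ (resp.\ $C^{1,2s+\vep-1}(\R^n)$ when $s\geq1/2$), while $u_2\in L_s$ vanishes on $B_\rho(x_0)$; and $(-\Delta)^su=(-\Delta)^su_1+(-\Delta)^su_2$ in $\S'$.

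For the tail piece, given $f\in\S$ with $\supp f\subset B_{\rho/2}(x_0)$, one has $\langle(-\Delta)^su_2,f\rangle=\int_{\R^n}u_2(z)(-\Delta)^sf(z)\,dz$; inserting the pointwise formula for $(-\Delta)^sf$ and applying Fubini (legitimate because $\dist(\supp f,\supp u_2)\geq\rho/2>0$ and $u_2\in L_s$) identifies $(-\Delta)^su_2$ on $B_{\rho/2}(x_0)$ with the smooth function $x\mapsto -c_{n,s}\int_{\R^n}|x-z|^{-(n+2s)}u_2(z)\,dz$. Since $u_2(x)=0$ and $\nabla u_2(x)=0$ for $x\in B_{\rho/2}(x_0)$, this equals the pointwise-formula integral of Theorem~\ref{thmFractionalLaplacian:pointwiseformulaS} applied to $u_2$.

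For the main piece $u_1$, let $u_1^{(k)}=u_1\ast\phi_{1/k}\in\S$ be mollifications. Since $u_1$ has compact support, $u_1^{(k)}\to u_1$ in $L^1(\R^n)$, hence in $\S_s'$ (because $\S_s\hookrightarrow L^\infty$), so $(-\Delta)^su_1^{(k)}\to(-\Delta)^su_1$ in $\S'$ by continuity of $(-\Delta)^s\colon\S_s'\to\S'$. On the other hand, Theorem~\ref{thmFractionalLaplacian:pointwiseformulaS} gives $(-\Delta)^su_1^{(k)}(x)$ explicitly; splitting that integral at $|x-z|=\rho/2$, on the far region $u_1^{(k)}\to u_1$ in $L^1$ against a bounded kernel, while on the near region the key point is that mollification does not increase the relevant $C^{2s+\vep}$ (resp.\ $C^{1,2s+\vep-1}$) seminorm, which furnishes a $k$-uniform integrable majorant, and $u_1^{(k)}\to u_1$, $\nabla u_1^{(k)}\to\nabla u_1$ locally uniformly; dominated convergence then shows that $(-\Delta)^su_1^{(k)}(x)$ converges on $B_{\rho/2}(x_0)$, with a $k$-uniform local majorant, to the continuous function $I_1(x)$ given by the pointwise formula of Theorem~\ref{thmFractionalLaplacian:pointwiseformulaS} applied to $u_1$. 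That majorant also dominates $|(-\Delta)^su_1^{(k)}|$ on $B_{\rho/2}(x_0)$, so the convergence holds in $L^1_{\mathrm{loc}}(B_{\rho/2}(x_0))$, hence in $\S'$ there; matching the two limits gives $(-\Delta)^su_1=I_1$ on $B_{\rho/2}(x_0)$.

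Adding the pieces on $B_{\rho/2}(x_0)$, $(-\Delta)^su$ equals the sum of the two absolutely convergent pointwise-formula integrals of $u_1$ and $u_2$; since $u_1+u_2=u$ and the corresponding function values and gradients add, linearity of the integral yields the pointwise formula for $u$ on $B_{\rho/2}(x_0)$, with continuity there inherited from $I_1$ and $I_2$. As $x_0\in\Omega$ was arbitrary this proves the theorem, the equivalence of the principal-value and gradient-subtracted forms for $1/2\leq s<1$ being exactly the one recorded in Theorem~\ref{thmFractionalLaplacian:pointwiseformulaS}. I expect the delicate step to be the third one: obtaining mollification-stable, $k$-uniform control of the near-singular integral — especially for $1/2\leq s<1$, where the first-order cancellation $u_1(x)-u_1(z)-\nabla u_1(x)\cdot(x-z)$ must be propagated through the convolution — and then upgrading the resulting pointwise convergence to $L^1_{\mathrm{loc}}$ convergence so that it can be identified with the distributional limit $(-\Delta)^su_1$.
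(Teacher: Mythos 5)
The paper does not actually prove this theorem; it defers to Silvestre's thesis and CPAM paper with the remark that the proof "is based on an approximation argument." Your plan is a correct and faithful rendering of that approach: mollify, control the near-singular part via the local H\"older (resp.\ $C^{1,\beta}$) regularity, and use the $\S_s'$--$\S'$ duality to identify the distributional $(-\Delta)^s u$ with the pointwise-formula candidate on a small ball. The one organizational choice you make that is worth noting is the cutoff decomposition $u=\eta u+(1-\eta)u$: it guarantees the mollifications $u_1^{(k)}$ are genuinely in $\S$, so Theorem~\ref{thmFractionalLaplacian:pointwiseformulaS} applies to them verbatim (rather than having to argue the pointwise formula for non-Schwartz smooth functions of $L_s$ growth), and it relegates the far field entirely to an elementary Fubini on $u_2$, which is cleaner. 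Two minor things to tighten up when fleshing this out: $(1)$ the claim that $I_1$ is continuous on $B_{\rho/2}(x_0)$ deserves its own (short) argument via the same split-at-$\rho/2$ dominated convergence, rather than being declared inherited; and $(2)$ in the case $1/2\le s<1$, note that the $k$-uniform majorant for the gradient-subtracted integrand comes from $[\nabla u_1^{(k)}]_{C^{2s+\vep-1}}\le[\nabla u_1]_{C^{2s+\vep-1}}$ together with the Taylor remainder bound, so the cancellation indeed "propagates through the convolution" as you anticipated. Neither of these is a gap, just the delicate bookkeeping you already flagged.
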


\begin{rem}[Semigroup formula for less regular functions]
It is an exercise to verify that the semigroup formula \eqref{eqFractionalLaplacian:semigroupformula},
that was initially derived for functions $u\in\mathcal{S}$, also holds for the class of less regular functions $u$
considered in Theorem \ref{thmFractionaLapalcian:pointwiselessregular}, for all $x\in\Omega$.
Indeed, it is easy to work out the computations in the proof of Theorem \ref{thmFractionalLaplacian:pointwiseformulaS} in a reverse order,
namely, by starting with the pointwise integro-differential formula for $(-\Delta)^su(x)$ and using the heat kernel
identity \eqref{eqFractionalLaplacian:kernelcomputation} to end up with \eqref{eqFractionalLaplacian:semigroupformula}. 
\end{rem}

We turn our attention to the pointwise limits for $s\to1^-$ and $s\to0^+$ in the case of less regular functions.
The explicit value of the constant $c_{n,s}$ in \eqref{eq:cns}, that we found through the method of semigroups, plays
a crucial role. Observe as well that we require minimal regularity on $u$ for $(-\Delta)^su(x)$
to be defined through the integral formula and for the \textit{pointwise} limits to have sense.

\begin{thm}
If $u\in C^2(B_2(x))\cap L^\infty(\R^n)$ at some $x\in\R^n$ then
$$\lim_{s\to1^-}(-\Delta)^su(x)=-\Delta u(x).$$
\end{thm}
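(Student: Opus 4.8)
The plan is to work directly with the integro-differential formula of Theorem~\ref{thmFractionalLaplacian:pointwiseformulaS}. Since only $s\to1^-$ matters we may assume $1/2\le s<1$ and use part~$(ii)$; here $u\in L^\infty(\R^n)\subset L_s$ and $u\in C^2(B_2(x))$ forces $u\in C^{1,2s+\vep-1}$ near $x$ for small $\vep>0$, so Theorem~\ref{thmFractionaLapalcian:pointwiselessregular} indeed yields the pointwise formula at $x$. Fix a radius $\delta\in(0,1)$ with $\overline{B_\delta(x)}\subset B_2(x)$, and split the integral in~$(ii)$ into a \emph{near} part over $\{|x-z|<\delta\}$ and a \emph{far} part over $\{|x-z|\ge\delta\}$. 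The whole argument is then: keep $\delta$ fixed and let $s\to1^-$, and only afterwards let $\delta\to0^+$.

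In the far part the cutoff removes the gradient term, and since $|u(x)-u(z)|\le2\|u\|_{L^\infty(\R^n)}$, passing to polar coordinates gives
\begin{equation*}
\Bigl|c_{n,s}\int_{|x-z|\ge\delta}\frac{u(x)-u(z)}{|x-z|^{n+2s}}\,dz\Bigr|\le 2\|u\|_{L^\infty(\R^n)}\,c_{n,s}\,|\mathbb{S}^{n-1}|\,\frac{\delta^{-2s}}{2s},
\end{equation*}
and this tends to $0$ as $s\to1^-$ because $c_{n,s}\sim s(1-s)\to0$ by \eqref{eq:cns}. This is the one place where boundedness of $u$ is essential, and where the smallness of the constant near $s=1$ does the work.

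For the near part I would Taylor expand: with $y=z-x$ and $u\in C^2(\overline{B_\delta(x)})$ one has $u(x)-u(z)-\nabla u(x)\cdot(x-z)=-\tfrac12\,y^\top D^2u(x)\,y+R(y)$ with $|R(y)|\le\tfrac12|y|^2\omega(|y|)$, $\omega$ being the nondecreasing modulus of continuity of $D^2u$ on $\overline{B_\delta(x)}$. Integrating the quadratic term in polar coordinates with $\int_{\mathbb{S}^{n-1}}\theta_i\theta_j\,d\theta=\frac{|\mathbb{S}^{n-1}|}{n}\delta_{ij}$ and $\int_0^\delta r^{1-2s}\,dr=\frac{\delta^{2-2s}}{2(1-s)}$ yields exactly $-A_{n,s}\,\Delta u(x)$, where, after substituting the explicit $c_{n,s}$ from \eqref{eq:cns} and $|\mathbb{S}^{n-1}|=2\pi^{n/2}/\Gamma(n/2)$,
\begin{equation*}
A_{n,s}=c_{n,s}\,\frac{|\mathbb{S}^{n-1}|\,\delta^{2-2s}}{4n(1-s)}=\frac{s\,4^{s}\,\Gamma(n/2+s)\,\delta^{2-2s}}{2n\,\Gamma(2-s)\,\Gamma(n/2)}\ \longrightarrow\ 1\quad\text{as }s\to1^-,
\end{equation*}
since numerator and denominator both converge to $2n\,\Gamma(n/2)$. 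The remainder is harmless: $c_{n,s}\int_{|y|<\delta}|R(y)|\,|y|^{-n-2s}\,dy\le c_{n,s}\tfrac{|\mathbb{S}^{n-1}|}{2}\omega(\delta)\int_0^\delta r^{1-2s}\,dr=n\,A_{n,s}\,\omega(\delta)$. Hence for each fixed $\delta$,
\begin{equation*}
\limsup_{s\to1^-}\bigl|(-\Delta)^su(x)+\Delta u(x)\bigr|\le\limsup_{s\to1^-}\bigl(|1-A_{n,s}|\,|\Delta u(x)|+n\,A_{n,s}\,\omega(\delta)\bigr)\le2n\,\omega(\delta),
\end{equation*}
and letting $\delta\to0^+$ finishes the proof by continuity of $D^2u$ at $x$. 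The only genuinely nontrivial point—and the step I would write out in full detail—is the exact normalization $A_{n,s}\to1$: the vanishing factor $c_{n,s}\sim s(1-s)$ is precisely what annihilates the contribution from infinity, while its exact size is precisely what turns the second-order Taylor term into $-\Delta u(x)$. Every other estimate is routine once $\delta$ is chosen small.
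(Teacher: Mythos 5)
Your proposal is correct and follows essentially the same route as the paper: split the integral at radius $\delta$, kill the tail using $u\in L^\infty$ together with $c_{n,s}\sim s(1-s)\to0$, and handle the near part by second-order Taylor expansion, separating the quadratic term (whose integral, multiplied by the explicit $c_{n,s}$, converges to $-\Delta u(x)$) from the $o(r^2)$ remainder controlled by the modulus of continuity of $D^2u$. The paper's $I$ and $II$ are exactly your remainder and quadratic-term integrals, and your explicit computation of $A_{n,s}\to1$ is the verification the paper leaves implicit.
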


\begin{proof}[Sketch of proof]
The full details of the proof can be found in \cite{Stinga, Stinga-Torrea CPDE}.
We can write $(-\Delta)^su(x)$ as
$$c_{n,s}\int_{|x-z|>\delta}\frac{u(x)-u(z)}{|x-z|^{n+2s}}\,dz
+c_{n,s}\int_{|x-z|<\delta}\frac{u(x)-u(z)-\nabla u(x)\cdot(x-z)}{|x-z|^{n+2s}}\,dz.$$
Since $u$ is bounded the first term above converges to zero as $s\to1^-$.
The second term can be written as
\begin{align*}
I-II&:=c_{n,s}\int_0^\delta r^{-1-2s}\int_{|z'|=1}\bigg[\frac{r^2}{2}\langle D^2u(x)z',z'\rangle-R_1u(x,rz')\bigg]\,dS_{z'}\,dr \\
&\qquad -c_{n,s}\int_0^\delta r^{-1-2s}\int_{|z'|=1}\frac{r^2}{2}\langle D^2u(x)z',z'\rangle\,dS_{z'}\,dr
\end{align*}
where $R_1u(x,rz')=u(x-rz')-u(x)+\nabla u(x)\cdot(rz')$.
By the regularity of $u$, $|I|\leq C\vep$ and using that
$$\int_{|z'|=1}\langle D^2u(x)z',z'\rangle\,dz'=\frac{(n/2+1)\pi^{n/2}}{\Gamma(n/2+2)}\Delta u(x)$$
the conclusion follows.
\end{proof}

Recall the definition of the space $L_s$ from \eqref{eqFractionalLaplacian:integrability}, for $0\leq s\leq1$.

\begin{thm}
If $u\in  C^\alpha(B_2(x))\cap L_0$ for some $x\in\R^n$ and $0<\alpha<1$ then
$$\lim_{s\to0^+}(-\Delta)^su(x)=u(x).$$
\end{thm}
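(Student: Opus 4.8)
The strategy is to use the pointwise formula of Theorem~\ref{thmFractionalLaplacian:pointwiseformulaS}$(i)$ together with the explicit value of the constant $c_{n,s}$ in \eqref{eq:cns}, which satisfies $c_{n,s}\to0$ (in fact $c_{n,s}\sim s\,\Gamma(n/2)/\pi^{n/2}$) as $s\to0^+$. First, note that for $0<s<\alpha/2$ one has $u\in C^{2s+\vep}(B_2(x))$ with $\vep=\alpha-2s>0$, and $\|u\|_{L_s}\leq\|u\|_{L_0}<\infty$ by \eqref{eqFractionalLaplacian:integrability}; hence Theorem~\ref{thmFractionaLapalcian:pointwiselessregular} gives, for all such $s$,
$$(-\Delta)^su(x)=c_{n,s}\int_{\R^n}\frac{u(x)-u(z)}{|x-z|^{n+2s}}\,dz,$$
with an absolutely convergent integral.

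Next I would split this integral over $\{|x-z|<1\}$ and $\{|x-z|\geq1\}$. On the inner region, the H\"older continuity of $u$ on $B_1(x)\subset B_2(x)$ yields $|u(x)-u(z)|\leq[u]_{C^\alpha(B_2(x))}|x-z|^\alpha$, so, with $\omega_{n-1}=|S^{n-1}|=2\pi^{n/2}/\Gamma(n/2)$,
$$c_{n,s}\bigg|\int_{|x-z|<1}\frac{u(x)-u(z)}{|x-z|^{n+2s}}\,dz\bigg|\leq c_{n,s}\,[u]_{C^\alpha(B_2(x))}\,\omega_{n-1}\int_0^1r^{\alpha-1-2s}\,dr=\frac{c_{n,s}\,[u]_{C^\alpha(B_2(x))}\,\omega_{n-1}}{\alpha-2s},$$
which tends to $0$ as $s\to0^+$ since $c_{n,s}\to0$ and $\alpha-2s\to\alpha>0$. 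On the outer region, I would split once more into
$$u(x)\,c_{n,s}\int_{|x-z|\geq1}\frac{dz}{|x-z|^{n+2s}}\;-\;c_{n,s}\int_{|x-z|\geq1}\frac{u(z)}{|x-z|^{n+2s}}\,dz.$$
The first term equals $u(x)\,c_{n,s}\,\omega_{n-1}\int_1^\infty r^{-1-2s}\,dr=u(x)\,c_{n,s}\,\omega_{n-1}/(2s)$, and inserting the value of $c_{n,s}$ from \eqref{eq:cns} together with $|\Gamma(-s)|=\Gamma(1-s)/s$ one finds that this equals $u(x)\cdot 4^s\Gamma(n/2+s)/(\Gamma(1-s)\Gamma(n/2))\to u(x)$ as $s\to0^+$. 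For the second term, since $|x-z|\geq1$ implies $|x-z|^{n+2s}\geq|x-z|^n\geq c_x(1+|z|^n)$ for some constant $c_x>0$ depending only on $x$ and $n$, it is bounded in absolute value by $c_{n,s}\,c_x^{-1}\,\|u\|_{L_0}\to0$. Adding the three contributions gives $\lim_{s\to0^+}(-\Delta)^su(x)=u(x)$.

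The only genuinely delicate point is the middle step: the limit is exactly $u(x)$, with no spurious constant, precisely because the method of semigroups delivered the closed form \eqref{eq:cns} of $c_{n,s}$, and one must track the cancellation between $c_{n,s}$, the Gamma factors and $\omega_{n-1}$ with care. Alternatively, the same conclusion can be obtained directly from the semigroup formula \eqref{eqFractionalLaplacian:semigroupformula}, splitting $\int_0^\infty$ at $t=1$ and using that $e^{t\Delta}u(x)\to0$ as $t\to\infty$ for $u\in L_0$, but the route via the pointwise formula above is the shortest.
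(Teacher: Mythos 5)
Your proof is correct and follows essentially the same route as the paper: apply the absolutely convergent pointwise formula of Theorem~\ref{thmFractionalLaplacian:pointwiseformulaS}$(i)$ for $s$ small, split the integral into a near region (handled by the local H\"older regularity and $c_{n,s}\to0$) and a far region, and in the far region separate the $u(x)$ part (which yields exactly $u(x)$ after the cancellation with the explicit constant \eqref{eq:cns}) from the $u(z)$ part (which vanishes by the $L_0$ integrability). The only cosmetic difference is the splitting radius: the paper takes $R=1+|x|$ so that $|x-z|>R$ forces $|z|>1$ and the comparison with the $L_0$ weight is immediate, whereas you split at $1$ and compensate with a constant $c_x\sim(2+|x|)^{-n}$; both are fine. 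One small imprecision: the pointwise bound $\frac{1}{1+|z|^{n+2s}}\leq\frac{1}{1+|z|^n}$ fails for $|z|<1$, so $\|u\|_{L_s}\leq\|u\|_{L_0}$ is not literally true, but a trivial two-region estimate gives $\|u\|_{L_s}\leq C\|u\|_{L_0}<\infty$, which is all you need.
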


\begin{proof}[Sketch of proof]
The complete details of the proof are found in \cite{Stinga}.
We have
$$(-\Delta)^su(x)=c_{n,s}\int_{|x-z|<R}\frac{u(x)-u(z)}{|x-z|^{n+2s}}\,dz
+c_{n,s}\int_{|x-z|>R}\frac{u(x)-u(z)}{|x-z|^{n+2s}}\,dz$$
for $R=1+|x|$.
Since $u$ is regular, the first term above converges to zero as $s\to0^+$.
The second term is split into two: one with $u(x)$ (that will converge to $u(x)$
as $s\to0^+$) and the other one with $u(y)$ (that will converge to zero as $s\to0^+$).
\end{proof}

\section{Inverse fractional Laplacian: semigroups, pointwise formula and the Poisson problem}\label{sec:negative}

If we apply the Fourier transform to solve the Poisson equation
$$(-\Delta)^su=f\quad\hbox{in}~\R^n$$
we find that $|\xi|^{2s}\widehat{u}(\xi)=\widehat{f}(\xi)$.
The inverse of the fractional Laplacian, or \textbf{negative power of the Laplacian} $(-\Delta)^{-s}$, $s>0$, is defined 
for $f\in\S$ as
\begin{equation}\label{eqFractionalLaplacian:negativepowersFourier}
\widehat{(-\Delta)^{-s}f}(\xi)=|\xi|^{-2s}\widehat{f}(\xi)\quad\hbox{for}~\xi\neq0.
\end{equation}
In principle, we need the restriction $0<s<n/2$ because when $s\geq n/2$ the multiplier $|\xi|^{-2s}$
does not define a tempered distribution, see \cite{Silvestre Thesis, Silvestre CPAM, Stein}.
This operator is also known as the \textit{fractional integral operator} in the harmonic analysis literature \cite{Duo,Stein}.

To find a pointwise expression for $(-\Delta)^{-s}f(x)$ at a point $x\in\R^n$ one could try to compute the inverse
Fourier transform in \eqref{eqFractionalLaplacian:negativepowersFourier}. This is a delicate task as
the Fourier multiplier $|\xi|^{-2s}$ is not in $L^2(\R^n)$, see \cite{Stein}.
Instead, we apply the method of semigroups.
We start by choosing $\lambda=|\xi|^2$, $\xi\neq0$, in the numerical formula \eqref{eq:numericalnegative}
to find that, for a.e. $\xi\in\R^n$,
$$\widehat{(-\Delta)^{-s}f}(\xi)=\frac{1}{\Gamma(s)}\int_0^\infty e^{-t|\xi|^2}\widehat{f}(\xi)\,\frac{dt}{t^{1-s}}.$$
Therefore, by inverting the Fourier transform,
we obtain the \textbf{semigroup formula for the inverse fractional Laplacian}
(see \cite{Balakrishnan,Kato,Komatsu,Stein,Yosida}, also \cite{Stinga})
\begin{equation}\label{eqFractionalLaplacian:fractionalintegral}
(-\Delta)^{-s}f(x)=\frac{1}{\Gamma(s)}\int_0^\infty e^{t\Delta}f(x)\,\frac{dt}{t^{1-s}}.
\end{equation}

The proof of the following result using heat kernels when $0<s<n/2$ is contained in \cite{Stinga}.
The proof of the case $s=n/2$ is an original contribution of the author for this volume.
Indeed, it has not been published elsewhere yet.

\begin{thm}[Fundamental solution]\label{thmFractionalLaplacian:fundamentalsolution}
Let $f\in\S$, $x\in\R^n$ and $0<s\leq n/2$. In the case when $s=n/2$ assume in addition that
$\displaystyle\int_{\R^n}f=0$. Then
\begin{equation}\label{eqFractionalLaplacian:negativeconvolution}
(-\Delta)^{-s}f(x)=\int_{\R^n}K_{-s}(x-z)f(z)\,dz.
\end{equation}
Here
$$K_{-s}(x)=\begin{cases}
\displaystyle c_{n,-s}\frac{1}{|x-z|^{n-2s}}&\hbox{if}~0<s<n/2 \\
\smallskip
\displaystyle\frac{1}{\Gamma(n/2)(4\pi)^{n/2}}(-2\log|x|-\gamma)&\hbox{if}~s=n/2
\end{cases}$$
where
$$\gamma=-\int_0^\infty e^{-r}\log r\,dr\approx 0.577215$$
is the Euler--Mascheroni constant and
$$c_{n,-s}=\frac{\Gamma(n/2-s)}{4^s\Gamma(s)\pi^{n/2}}\quad\hbox{for}~0<s<n/2.$$
\end{thm}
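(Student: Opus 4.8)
The plan is to start from the semigroup formula \eqref{eqFractionalLaplacian:fractionalintegral} and insert the explicit Gauss--Weierstrass kernel \eqref{eqFractionalLaplacian:heatkernel}, exactly as in the sketch for Theorem \ref{thmFractionalLaplacian:pointwiseformulaS}. Writing $e^{t\Delta}f(x)=\int_{\R^n}G_t(x-z)f(z)\,dz$ and interchanging the order of integration (justified by absolute convergence when $0<s<n/2$, as we check below), we obtain
$$(-\Delta)^{-s}f(x)=\int_{\R^n}\Bigl[\frac{1}{\Gamma(s)}\int_0^\infty G_t(x-z)\,\frac{dt}{t^{1-s}}\Bigr]f(z)\,dz,$$
so that $K_{-s}(x-z)$ is precisely the bracketed inner integral. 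For the case $0<s<n/2$ this inner integral is computed by the change of variables $r=|x-z|^2/(4t)$, which turns $\frac{1}{\Gamma(s)}\int_0^\infty \frac{1}{(4\pi t)^{n/2}}e^{-|x-z|^2/(4t)}\,\frac{dt}{t^{1-s}}$ into a Gamma integral and yields $c_{n,-s}|x-z|^{-(n-2s)}$ with $c_{n,-s}=\Gamma(n/2-s)/(4^s\Gamma(s)\pi^{n/2})$; this is the same computation as \eqref{eqFractionalLaplacian:kernelcomputation} with the exponent of $t$ flipped. The convergence of the double integral must be checked separately near $t=0$ (where $G_t$ is bounded uniformly away from the diagonal and one uses $s<n/2$ to integrate $t^{s-1}$ near $0$) and near $t=\infty$ (where $G_t(x-z)\le C t^{-n/2}$ and one uses $s<n/2$ together with $f\in\S$); with these bounds Fubini applies.

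The nontrivial part is the critical case $s=n/2$. Here the inner integral $\frac{1}{\Gamma(n/2)}\int_0^\infty G_t(x-z)\,\frac{dt}{t^{1-n/2}}=\frac{1}{\Gamma(n/2)(4\pi)^{n/2}}\int_0^\infty e^{-|x-z|^2/(4t)}\,\frac{dt}{t}$ \emph{diverges} logarithmically at $t=\infty$, so $K_{-n/2}$ cannot be defined as a pointwise absolutely convergent integral and the plain Fubini argument fails. The idea is to exploit the hypothesis $\int_{\R^n}f=0$ to subtract off the divergent tail before integrating. Concretely, split $\int_0^\infty=\int_0^1+\int_1^\infty$ in \eqref{eqFractionalLaplacian:fractionalintegral}; on $(0,1)$ the same change of variables as above gives an absolutely convergent contribution $\int_{\R^n}H(x-z)f(z)\,dz$ with an explicit radial $H$, while on $(1,\infty)$ one writes $e^{t\Delta}f(x)=\int_{\R^n}G_t(x-z)f(z)\,dz=\int_{\R^n}\bigl(G_t(x-z)-G_t(x)\bigr)f(z)\,dz$ using $\int f=0$, and the difference $G_t(x-z)-G_t(x)=O(t^{-n/2-1})$ uniformly for $z$ in the support considerations of $f$, which makes $\int_1^\infty(\cdots)\,dt/t^{1-n/2}=\int_1^\infty(\cdots)\,dt$ converge. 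Collecting the two pieces, carrying out the one-dimensional $t$-integrals (these are elementary: $\int_0^1 e^{-a/t}\,dt/t$ and its regularized counterpart produce an exponential-integral/log term), and recognizing the constant of integration as $-\gamma=\int_0^\infty e^{-r}\log r\,dr$ via the classical identity $\int_0^\infty e^{-r}\log r\,dr=-\gamma$, one arrives at the stated logarithmic kernel $K_{-n/2}(x)=\frac{1}{\Gamma(n/2)(4\pi)^{n/2}}(-2\log|x|-\gamma)$; note the $-2\log|x|$ comes from the homogeneity degree, since formally $|x|^{-(n-2s)}\big|_{s=n/2}=|x|^0=1$ and its $s$-derivative at $s=n/2$ produces $2\log|x|$.

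The main obstacle I expect is the bookkeeping in the critical case: one has to track how the two integrals $\int_0^1$ and $\int_1^\infty$ each contribute a $\log|x|$-type term and a constant, and show the $|x|$-dependence of the constants cancels so that only $-2\log|x|-\gamma$ survives, independent of the (arbitrary) split point $1$. A clean way to organize this is to compute instead $K_{-s}$ for $s$ slightly less than $n/2$ using the already-proven formula $c_{n,-s}|x|^{-(n-2s)}$ and take the limit $s\to (n/2)^-$: since $c_{n,-s}=\frac{\Gamma(n/2-s)}{4^s\Gamma(s)\pi^{n/2}}\sim\frac{1}{(n/2-s)\,4^{n/2}\Gamma(n/2)\pi^{n/2}}$ blows up like $1/(n/2-s)$ while $|x|^{-(n-2s)}=e^{(2s-n)\log|x|}=1+(2s-n)\log|x|+o(n/2-s)$, the product has a finite limit precisely after subtracting the $z$-independent divergent constant $\frac{1}{(n/2-s)4^{n/2}\Gamma(n/2)\pi^{n/2}}$, which is exactly the term annihilated by $\int f=0$; expanding $c_{n,-s}$ to first order using $\Gamma(n/2-s)=\frac{1}{n/2-s}+\psi\text{-terms}$ and $\Gamma'(1)=-\gamma$ then delivers the $-\gamma$ constant automatically. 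I would present the direct $\int_0^1+\int_1^\infty$ computation as the main line and mention the limiting argument as a cross-check.
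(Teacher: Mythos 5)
Your proposal is correct and follows the same basic strategy as the paper (insert the Gauss--Weierstrass kernel, use $\int f=0$ to regularize the divergent tail, apply Fubini), but the specific regularization differs. The paper first changes variables $r=1/(4t)$ so that the kernel integral becomes $\frac{1}{\Gamma(n/2)(4\pi)^{n/2}}\int_0^\infty e^{-r|z|^2}\,\frac{dr}{r}$, and then regularizes by subtracting the indicator $\chi_{(0,1)}(r)$ (legitimate since $\int_{\R^n}\chi_{(0,1)}(r)f(x-z)\,dz=0$); after that, a direct estimate via integration by parts shows $\int_0^\infty|e^{-r|z|^2}-\chi_{(0,1)}(r)|\,\frac{dr}{r}\le C+C|\log|z||\in L^1_{\mathrm{loc}}$, so Fubini applies and the explicit kernel falls out with the constant $\int_0^\infty e^{-r}\log r\,dr=-\gamma$ appearing automatically. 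Your version instead keeps $t$, splits $\int_0^1+\int_1^\infty$, and on the tail replaces $G_t(x-z)$ by $G_t(x-z)-G_t(x)$. This is also correct, but two points deserve care. First, your pointwise bound $G_t(x-z)-G_t(x)=O(t^{-n/2-1})$ only holds uniformly for $z$ in a \emph{bounded} set; since $f\in\S$ is not compactly supported you should either state it as an integrated estimate against $f$ (which holds, using $\int|z||f(z)|\,dz<\infty$ and the rapid decay of $f$) or use the slightly worse but uniform bound $\sup_\xi|\nabla G_t(\xi)|\sim t^{-n/2-1/2}$, which still makes $\int_1^\infty(\cdots)\,dt$ converge. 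Second, your subtraction $G_t(x)$ depends on $x$ rather than on $x-z$, so the intermediate quantity is not a convolution kernel; the identity is rescued only because $G_t(x)\int f=0$, and you should say so explicitly. One advantage of the paper's substitution and $\chi_{(0,1)}(r)$ cutoff is that it sidesteps the bookkeeping you flag about the arbitrary split point and the cancellation of $|x|$-dependent constants. Finally, regarding your cross-check via $s\to(n/2)^-$: expanding $c_{n,-s}|x|^{-(n-2s)}$ produces, besides the pole $\frac{1}{n/2-s}$ and $-2\log|x|$, additional finite constants $\log 4+\psi(n/2)$, so the limiting argument does not recover the paper's specific additive constant $-\gamma$; it recovers the theorem's identity $(-\Delta)^{-n/2}f=\int K_{-n/2}(x-z)f(z)\,dz$ only because any additive constant in $K_{-n/2}$ is annihilated by $\int f=0$. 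This is worth noting if you present the limiting computation as a ``cross-check'' of the kernel formula rather than just of the integral identity.
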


The reader should compare the explicit constant $c_{n,-s}$ in Theorem \ref{thmFractionalLaplacian:fundamentalsolution}
(that we found through the method of semigroups, see \cite{Stinga}) with the constant $c_{n,s}$ 
for the fractional Laplacian given in \eqref{eq:cns}.

\begin{proof}[Proof of Theorem \ref{thmFractionalLaplacian:fundamentalsolution} in the case $2s=n$]
As we mentioned above, this proof is an original contribution of the author for this volume.
From \eqref{eqFractionalLaplacian:fractionalintegral} and \eqref{eqFractionalLaplacian:heatkernel},
the change of variables $r=1/(4t)$ and the fact that $f$ has zero mean,
\begin{equation}\label{eq:preFubini2}
\begin{aligned}
(-\Delta)^{-s}f(x) &= \frac{1}{\Gamma(n/2)(4\pi)^{n/2}}\int_0^\infty\int_{\R^n}e^{-r|z|^2}f(x-z)\,dz\,\frac{dr}{r} \\
 &= \frac{1}{\Gamma(n/2)(4\pi)^{n/2}}\int_0^\infty\int_{\R^n}\big(e^{-r|z|^2}-\chi_{(0,1)}(r)\big)f(x-z)\,dz\,\frac{dr}{r}.
\end{aligned}
\end{equation}
The second double integral in \eqref{eq:preFubini2}
is absolutely convergent. Indeed,
\begin{equation*}
\begin{aligned}
&\int_0^\infty|e^{-r|z|^2}-\chi_{(0,1)}(r)|\,\frac{dr}{r}
= \int_0^{|z|^2}(1-e^{-r})\,\frac{dr}{r}+\int_{|z|^2}^\infty e^{-r}\,\frac{dr}{r} \\
&= \int_0^{|z|^2}(1-e^{-r})\frac{d}{dr}(\log r)\,dr+\int_{|z|^2}^\infty e^{-r}\frac{d}{dr}(\log r)\,dr \\
&= (1-e^{-|z|^2})\log(|z|^2)-\int_0^{|z|^2}e^{-r}\log r\,dr-e^{-|z|^2}\log(|z|^2)+\int_{|z|^2}^\infty e^{-r}\log r\,dr \\
&\leq  C+C|\log|z||\in L^1_{\mathrm{loc}}(\R^n).
\end{aligned}
\end{equation*}
Thus we can apply Fubini's theorem in \eqref{eq:preFubini2}. By following the computation
we just did above we get the formula for the kernel:
\begin{align*}
K_{-n/2}(z) &= \frac{1}{\Gamma(n/2)(4\pi)^{n/2}}\bigg[\int_0^1\big(e^{-r|z|^2}-1\big)\,\frac{dr}{r}
+\int_1^\infty e^{-r|z|^2}\,\frac{dr}{r}\bigg] \\
&= \frac{1}{\Gamma(n/2)(4\pi)^{n/2}}\bigg[-\log(|z|^2)+\int_0^\infty e^{-r}\log r\,dr\bigg].
\end{align*}
\end{proof}

\begin{thm}[Distributional solvability]\label{thmFractionalLaplacian:distributionalsolvability}
Let $f\in L^\infty(\R^n)$ with compact support and $0<s<\min\{1,n/2\}$. Define
$$u(x)=(-\Delta)^{-s}f(x)=\frac{1}{\Gamma(s)}\int_0^\infty e^{t\Delta}f(x)\,\frac{dt}{t^{1-s}}.$$
Then $u$ is given by the pointwise formula \eqref{eqFractionalLaplacian:negativeconvolution},
$u\in L^\infty(\R^n)$ with
$$\|u\|_{L^\infty(\R^n)}\leq C_{n,s}\|f\|_{L^\infty(\R^n)}$$
for some positive constant $C_{n,s}$, and
$$|u(x)|\to 0\quad\hbox{as}~|x|\to\infty.$$
In addition, $(-\Delta)^su=f$ in the sense of distributions.
\end{thm}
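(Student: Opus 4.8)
The plan is to establish in turn the convolution representation \eqref{eqFractionalLaplacian:negativeconvolution}, the $L^\infty$ bound, the decay at infinity, and the distributional identity $(-\Delta)^su=f$. The first step is to check that the semigroup integral converges absolutely and rearranges into the convolution. Writing $e^{t\Delta}f(x)=\int_{\R^n}G_t(x-z)f(z)\,dz$ with $G_t$ as in \eqref{eqFractionalLaplacian:heatkernel}, the region near $t=0$ is harmless since $|e^{t\Delta}f(x)|\le\|f\|_{L^\infty}$ and $\int_0^1t^{s-1}\,dt<\infty$ because $s>0$; near $t=\infty$ the compact support of $f$ gives $|e^{t\Delta}f(x)|\le(4\pi t)^{-n/2}\|f\|_{L^1}$, and $\int_1^\infty t^{-n/2+s-1}\,dt<\infty$ precisely because $s<n/2$. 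The same estimate shows that $\int_0^\infty\int_{\R^n}G_t(x-z)|f(z)|\,dz\,\frac{dt}{t^{1-s}}<\infty$, so Fubini's theorem gives $u(x)=\int_{\R^n}\big[\frac{1}{\Gamma(s)}\int_0^\infty G_t(x-z)\,\frac{dt}{t^{1-s}}\big]f(z)\,dz$, and the inner bracket is evaluated exactly as in \eqref{eqFractionalLaplacian:kernelcomputation} by the change of variables $r=|x-z|^2/(4t)$, yielding $c_{n,-s}|x-z|^{-(n-2s)}=K_{-s}(x-z)$, the exponent being admissible since $s<n/2$. Alternatively one may approximate $f$ by Schwartz functions and quote Theorem \ref{thmFractionalLaplacian:fundamentalsolution}.

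For the $L^\infty$ bound I would split $u(x)=c_{n,-s}\int_{|x-z|<1}\frac{f(z)}{|x-z|^{n-2s}}\,dz+c_{n,-s}\int_{|x-z|\ge1}\frac{f(z)}{|x-z|^{n-2s}}\,dz$; since $n-2s<n$ the first term is at most $C_n\|f\|_{L^\infty}$ and the second at most $c_{n,-s}\|f\|_{L^\infty}|\supp f|$, so $\|u\|_{L^\infty}\le C_{n,s}\|f\|_{L^\infty}$ with $C_{n,s}$ depending on $n$, $s$ and the size of $\supp f$. For the decay, if $\supp f\subseteq B_\rho$ and $|x|>2\rho$ then $|x-z|\ge|x|/2$ on $\supp f$, hence $|u(x)|\le c_{n,-s}\|f\|_{L^\infty}|B_\rho|(|x|/2)^{-(n-2s)}\to0$ as $|x|\to\infty$, once more using $s<n/2$.

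For the distributional equation, note first that $u\in L^\infty(\R^n)\subset L^1_{\mathrm{loc}}(\R^n)$ and that the above bound gives $\|u\|_{L_s}<\infty$ (cf. \eqref{eqFractionalLaplacian:integrability}), so $u\in L_s=L^1_{\mathrm{loc}}(\R^n)\cap\S_s'$ and $\langle(-\Delta)^su,\varphi\rangle=\int_{\R^n}u(x)(-\Delta)^s\varphi(x)\,dx$ for every $\varphi\in\S$. I would then insert the semigroup formula \eqref{eqFractionalLaplacian:fractionalintegral} for $u$, use self-adjointness of $e^{t\Delta}$ on $L^2$ (all functions involved are square integrable: $f$ is bounded with compact support, and $(-\Delta)^s\varphi\in\S_s$ decays like $(1+|x|)^{-n-2s}$ by Lemma \ref{lemFractionalLaplacian:classSs}) together with $e^{t\Delta}(-\Delta)^s\varphi=(-\Delta)^se^{t\Delta}\varphi$, to obtain $\int_{\R^n}u(-\Delta)^s\varphi\,dx=\int_{\R^n}f(x)\big[\frac{1}{\Gamma(s)}\int_0^\infty(-\Delta)^se^{t\Delta}\varphi(x)\,\frac{dt}{t^{1-s}}\big]dx$. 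The bracket equals $\varphi(x)$, since it is $(-\Delta)^{-s}(-\Delta)^s\varphi$ and on the Fourier side \eqref{eq:numericalnegative} with $\lambda=|\xi|^2$ gives $\frac{1}{\Gamma(s)}\int_0^\infty|\xi|^{2s}e^{-t|\xi|^2}t^{s-1}\,dt=1$. Therefore $\langle(-\Delta)^su,\varphi\rangle=\int_{\R^n}f\varphi$, i.e. $(-\Delta)^su=f$ in $\S'$.

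The main obstacle is the bookkeeping for the two Fubini applications in the last step: swapping $dt$ against $dx$ in $\int u\,(-\Delta)^s\varphi$, and swapping again after moving $e^{t\Delta}$ onto $\varphi$. The first is controlled by $\|e^{t\Delta}f\|_{L^\infty}\le\min\{\|f\|_{L^\infty},Ct^{-n/2}\|f\|_{L^1}\}$ together with $(-\Delta)^s\varphi\in L^1$, with $s<n/2$ again responsible for convergence of $\int_1^\infty$. The second is controlled by $\|(-\Delta)^se^{t\Delta}\varphi\|_{L^\infty}\le\|(-\Delta)^s\varphi\|_{L^\infty}$ for $t\le1$ and the scaling estimate $\|(-\Delta)^se^{t\Delta}\varphi\|_{L^\infty}=\|(-\Delta)^sG_t\ast\varphi\|_{L^\infty}\le Ct^{-n/2-s}\|\varphi\|_{L^1}$ for $t\ge1$, coming from $(-\Delta)^sG_t(x)=t^{-n/2-s}[(-\Delta)^sG_1](x/\sqrt t)$; combined with $f\in L^1$ this makes the triple integral absolutely convergent and legitimizes all the manipulations.
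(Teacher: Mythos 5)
Your argument is correct and complete: the absolute-convergence checks near $t=0$ (using $s>0$) and near $t=\infty$ (using $s<n/2$ and $f\in L^1$) legitimize the Fubini swap into the convolution with $c_{n,-s}|x-z|^{-(n-2s)}$, the $L^\infty$ and decay bounds are the standard near/far splits, and the distributional identity is established cleanly via self-adjointness of $e^{t\Delta}$, commutation $e^{t\Delta}(-\Delta)^s=(-\Delta)^se^{t\Delta}$, the Fourier-side identity $\frac{1}{\Gamma(s)}\int_0^\infty|\xi|^{2s}e^{-t|\xi|^2}t^{s-1}\,dt=1$, and carefully justified interchanges controlled by the bounds $\|(-\Delta)^se^{t\Delta}\varphi\|_{L^\infty}\lesssim\min\{1,t^{-n/2-s}\}$ and $(-\Delta)^s\varphi\in\S_s\subset L^1$. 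The paper itself does not print a proof but refers to the one-dimensional case in \cite[Theorem~9.9]{CRSTV}, which ``follows the exact same lines,'' and your proof is precisely the semigroup argument that reference employs; note only that the constant $C_{n,s}$ in the $L^\infty$ bound necessarily depends on $\operatorname{diam}(\operatorname{supp}f)$ as well (as you observe), so the statement of the theorem is slightly imprecise on this point, and the displayed kernel $K_{-s}(x)=c_{n,-s}|x-z|^{-(n-2s)}$ in \eqref{eqFractionalLaplacian:negativeconvolution} contains a typo for $|x|^{-(n-2s)}$ which you silently correct.
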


The proof of Theorem \ref{thmFractionalLaplacian:distributionalsolvability} follows the exact same lines
as the proof of the one dimensional case presented in \cite[Theorem~9.9]{CRSTV}.

\section{Extension problem: semigroup approach, weak formulation}\label{sec:extension}

As we mentioned in the introduction, the extension problem for the fractional Laplacian 
is a characterization of $(-\Delta)^s$ as the Dirichlet-to-Neumann map for
a local degenerate elliptic PDE. This localization technique was introduced and exploited in the PDE context by
Caffarelli and Silvestre \cite{Caffarelli-Silvestre CPDE}.
The method of semigroups for this problem that the author and Torrea developed in
\cite{Stinga, Stinga-Torrea CPDE} (see also \cite{Gale-Miana-Stinga}) provided new insights,
explicit formulas for the solution, the useful Bessel functions analysis and, ultimately, a unified approach.
To present the extension problem, for $0<s<1$, we let
$$a=1-2s\in(-1,1).$$

\begin{thm}[Extension problem for positive powers]\label{thmFractionalLaplacian:extensionproblem}
Let $u\in\S$. The unique solution $U=U(x,y):\R^n\times[0,\infty)\to\R$ to
\begin{equation}\label{eqFractionalLaplacian:extensionequation}
\begin{cases}
\Delta U+\frac{a}{y}U_y+U_{yy}=0&\hbox{in}~\R^n\times(0,\infty)\\
U(x,0)=u(x)&\hbox{on}~\R^n
\end{cases}
\end{equation}
that weakly vanishes as $y\to\infty$ is given by the following formulas
\begin{equation}\label{eqFractionalLaplacian:U}
\begin{aligned}
U(x,y) &= \frac{y^{2s}}{4^s\Gamma(s)}\int_0^\infty e^{-y^2/(4t)}e^{t\Delta}u(x)\,\frac{dt}{t^{1+s}} \\
&= \frac{1}{\Gamma(s)}\int_0^\infty e^{-t}e^{-\frac{y^2}{4t}\Delta}u(x)\,\frac{dt}{t^{1-s}} \\
&= \frac{1}{\Gamma(s)}\int_0^\infty e^{-y^2/(4t)}e^{t\Delta}((-\Delta)^su)(x)\,\frac{dt}{t^{1-s}} \\
&= \frac{\Gamma(n/2+s)}{\Gamma(s)\pi^{n/2}}\int_{\R^n}\frac{y^{2s}}{(y^2+|x-z|^2)^{(n+2s)/2}}u(z)\,dz.
\end{aligned}
\end{equation}
Moreover $U\in C^\infty(\R^n\times(0,\infty))\cap C(\R^n\times[0,\infty))$ satisfies
\begin{equation}\label{eqFractionalLaplacian:limit1}
-\lim_{y\to0^+}y^{a}U_y(x,y)=\frac{\Gamma(1-s)}{4^{s-1/2}\Gamma(s)}(-\Delta)^su(x)
\end{equation}
and
\begin{equation}\label{eqFractionalLaplacian:limit2}
-\lim_{y\to0^+}\frac{U(x,y)-U(x,0)}{y^{2s}}=\frac{\Gamma(1-s)}{4^s\Gamma(1+s)}(-\Delta)^su(x).
\end{equation}
\end{thm}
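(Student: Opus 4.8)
The plan is to verify everything by reducing to three essentially independent tasks: (1) checking that each displayed formula in \eqref{eqFractionalLaplacian:U} genuinely solves the extension PDE in \eqref{eqFractionalLaplacian:extensionequation}; (2) establishing that all four expressions coincide; and (3) extracting the Neumann-type limits \eqref{eqFractionalLaplacian:limit1} and \eqref{eqFractionalLaplacian:limit2}. For step (1) the cleanest route is to work on the Fourier side. Writing $\widehat{U}(\xi,y)$ for the Fourier transform in $x$, the PDE becomes the ODE $\widehat{U}_{yy}+\frac{a}{y}\widehat{U}_y-|\xi|^2\widehat{U}=0$ for each fixed $\xi$, which is a modified Bessel equation: the substitution $\widehat{U}(\xi,y)=y^{s}w(|\xi|y)$ turns it into the modified Bessel equation of order $s$, whose only solution decaying as $y\to\infty$ is (a constant times) $K_s$. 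Imposing $\widehat{U}(\xi,0)=\widehat{u}(\xi)$ and using $K_s(z)\sim \tfrac12\Gamma(s)(z/2)^{-s}$ as $z\to0^+$ fixes the constant, so $\widehat{U}(\xi,y)=\frac{2}{\Gamma(s)}\big(\tfrac{|\xi|y}{2}\big)^{s}K_s(|\xi|y)\widehat{u}(\xi)$. This simultaneously proves existence and the uniqueness claim (among solutions weakly vanishing at infinity), since the second Bessel solution $I_s$ grows exponentially.

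For step (2), each formula in \eqref{eqFractionalLaplacian:U} is matched to this Fourier expression using the subordination-type integral representations of $K_s$. Applying $e^{t\Delta}$ on the Fourier side means inserting $e^{-t|\xi|^2}$, so the first formula becomes $\frac{y^{2s}}{4^s\Gamma(s)}\int_0^\infty e^{-y^2/(4t)}e^{-t|\xi|^2}\,\frac{dt}{t^{1+s}}\,\widehat{u}(\xi)$; the classical integral $\int_0^\infty e^{-y^2/(4t)}e^{-t|\xi|^2}t^{-1-s}\,dt$ evaluates (change of variables, definition of $K_s$) to exactly the right multiple of $(|\xi|/y)^{s}K_s(|\xi|y)$. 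The second formula is the same integral after the substitution $t\mapsto y^2/(4t)$. The third formula differs by a factor $|\xi|^{2s}$ moved from inside to outside, i.e.\ it uses $\int_0^\infty e^{-y^2/(4t)}e^{-t|\xi|^2}t^{-1+s}\,dt$ against $\widehat{(-\Delta)^su}$, and this is just the previous identity with $s\mapsto -s$ (equivalently, the recurrence/symmetry $K_s=K_{-s}$ together with the Gamma-function bookkeeping). The fourth, Poisson-kernel formula follows by recognizing that inverting the Fourier multiplier $\frac{2}{\Gamma(s)}(|\xi|y/2)^sK_s(|\xi|y)$ gives precisely the kernel $\frac{\Gamma(n/2+s)}{\Gamma(s)\pi^{n/2}}\frac{y^{2s}}{(y^2+|x-z|^2)^{(n+2s)/2}}$; alternatively, plug $G_t$ into the first formula, do the $t$-integral pointwise (the same computation as \eqref{eqFractionalLaplacian:kernelcomputation} but with the extra Gaussian factor $e^{-y^2/(4t)}$), and identify the resulting Bessel integral with the explicit Poisson kernel. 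Smoothness of $U$ in $\R^n\times(0,\infty)$ and continuity up to $y=0$ then follow from the rapid decay of $\widehat{u}$ and dominated convergence, using $G_t$ as an approximation to the identity for the boundary value.

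For step (3), I would use the third representation $U(x,y)=\frac{1}{\Gamma(s)}\int_0^\infty e^{-y^2/(4t)}e^{t\Delta}\big((-\Delta)^su\big)(x)\,\frac{dt}{t^{1-s}}$, which isolates $(-\Delta)^su$ cleanly. For \eqref{eqFractionalLaplacian:limit2}, write $U(x,y)-U(x,0)=\frac{1}{\Gamma(s)}\int_0^\infty (e^{-y^2/(4t)}-1)e^{t\Delta}(\cdots)\frac{dt}{t^{1-s}}$ (legitimate because $e^{t\Delta}((-\Delta)^su)(x)\to 0$ as $t\to\infty$ and stays bounded as $t\to0$), substitute $t=y^2/(4r)$, and pass to the limit: $e^{t\Delta}((-\Delta)^su)(x)\to(-\Delta)^su(x)$ as $t\to0$, leaving $\frac{(-\Delta)^su(x)}{\Gamma(s)}\int_0^\infty(e^{-r}-1)\big(\tfrac{y^2}{4r}\big)^{s-1}\frac{y^2}{4r^2}\,dr\cdot\frac{1}{y^{2s}}$, and the remaining numerical integral is $\frac{4^{-s}}{s}\int_0^\infty(1-e^{-r})\,r^{-1-s}\,dr=4^{-s}\Gamma(1-s)/s=\Gamma(1-s)/(4^s s)$ after an integration by parts, giving the stated constant $\Gamma(1-s)/(4^s\Gamma(1+s))$. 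For \eqref{eqFractionalLaplacian:limit1}, differentiate the first representation in $y$ under the integral sign (the prefactor $y^{2s}$ makes this slightly delicate, so I would split $\partial_y(y^{2s}\,\cdot\,)$ by the product rule), multiply by $y^a=y^{1-2s}$, and again substitute $t=y^2/(4r)$ and let $y\to0^+$; the dominant term produces $\frac{(-\Delta)^su(x)}{4^s\Gamma(s)}$ times a numerical integral of the form $\int_0^\infty r^{s-1}e^{-r}\,dr=\Gamma(s)$ up to the correct power of $4$, yielding $\Gamma(1-s)/(4^{s-1/2}\Gamma(s))$; the remaining term vanishes in the limit. The main obstacle is the limit computation in step (3): one must justify differentiation and limits under the integral sign uniformly in small $y$, which requires splitting each integral into a region $t\lesssim y^2$ (where $e^{t\Delta}((-\Delta)^su)(x)$ is close to its value at $t=0$, controlled since $(-\Delta)^su\in C^\infty$ with bounded derivatives away from infinity) and a region $t\gtrsim y^2$ (where the Gaussian factor $e^{-y^2/(4t)}$ is close to $1$ and one uses the decay of the heat semigroup on $(-\Delta)^su$, i.e.\ the tail bound coming from $(-\Delta)^su\in C^\infty\cap L^\infty$ decaying at infinity). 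Once this bookkeeping is in place, everything else is a routine Gamma-function calculation.
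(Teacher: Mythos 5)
Your existence--uniqueness argument via the Fourier-side Bessel ODE, and the matching of all four formulas in \eqref{eqFractionalLaplacian:U} through the integral representation of $K_s$, is essentially identical to what the paper does (including the explicit constant $\frac{2^{1-s}}{\Gamma(s)}(y|\xi|)^s K_s(y|\xi|)$, which your $\frac{2}{\Gamma(s)}(|\xi|y/2)^s K_s(|\xi|y)$ equals). Your derivation of \eqref{eqFractionalLaplacian:limit2} from the third representation, via the substitution $t=y^2/(4r)$ and the identity $\int_0^\infty(1-e^{-r})r^{-1-s}\,dr=\Gamma(1-s)/s$, is also correct.

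The gap is in your argument for \eqref{eqFractionalLaplacian:limit1}. You propose to differentiate the \emph{first} representation $U=\frac{y^{2s}}{4^s\Gamma(s)}\int_0^\infty e^{-y^2/(4t)}e^{t\Delta}u(x)\,\frac{dt}{t^{1+s}}$, apply the product rule to $\partial_y(y^{2s}\cdot)$, multiply by $y^{1-2s}$, substitute $t=y^2/(4r)$, and claim the dominant term yields the constant and ``the remaining term vanishes.'' This fails: after the substitution, \emph{both} terms produced by the product rule are of order $y^{-2s}$ (each one contains $e^{(y^2/4r)\Delta}u(x)\to u(x)$, not $(-\Delta)^su(x)$, and neither term vanishes; only a delicate cancellation between them gives a finite limit). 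Furthermore, the numerical integral you invoke, $\int_0^\infty r^{s-1}e^{-r}\,dr=\Gamma(s)$, cannot possibly produce the factor $\Gamma(1-s)$ in the stated constant. The paper handles this by first using the normalization identity \eqref{eqFractionalLaplacian:uno}, $\frac{y^{2s}}{4^s\Gamma(s)}\int_0^\infty e^{-y^2/(4t)}\frac{dt}{t^{1+s}}=1$, to rewrite $U(x,y)-u(x)$ before differentiating, so that the integrand carries the factor $e^{t\Delta}u(x)-u(x)=O(t)$ near $t=0$ and the limit $y\to 0^+$ can be taken directly, landing on the semigroup formula \eqref{eqFractionalLaplacian:semigroupformula} for $(-\Delta)^su$. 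Alternatively --- and more in line with the opening sentence of your step (3) --- differentiate the \emph{third} representation (which has no $y^{2s}$ prefactor): one gets $y^aU_y=-\frac{4^{1-s}}{2\Gamma(s)}\int_0^\infty e^{-r}e^{(y^2/4r)\Delta}\big((-\Delta)^su\big)(x)\,r^{-s}\,dr$, and the numerical integral is $\int_0^\infty e^{-r}r^{-s}\,dr=\Gamma(1-s)$ (not $\Gamma(s)$), yielding the constant $\Gamma(1-s)/(4^{s-1/2}\Gamma(s))$ cleanly. Either route works, but the argument as you wrote it does not.
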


The first three formulas for $U$ in \eqref{eqFractionalLaplacian:U} are due to
\cite{Stinga, Stinga-Torrea CPDE}, while the last one was found in \cite{Caffarelli-Silvestre CPDE}.
The explicit constants appearing in the limits
\eqref{eqFractionalLaplacian:limit1} and \eqref{eqFractionalLaplacian:limit2}
were first discovered in \cite{Stinga, Stinga-Torrea CPDE}. Observe that when $s=1/2$ the first two formulas
in \eqref{eqFractionalLaplacian:U} reduce to the so-called Bochner subordination formula
see \cite{Bochner,Bochner-book}, also \cite{Jacob-Schilling,Stein-rojo}. Nevertheless,
the third formula in \eqref{eqFractionalLaplacian:U} is original from \cite{Stinga,Stinga-Torrea CPDE} even for the case $s=1/2$.

The idea for solving explicitly the extension problem by using Bessel functions was introduced in \cite{Stinga, Stinga-Torrea CPDE}.
For each $y>0$ we apply the Fourier transform in the variable $x$
to \eqref{eqFractionalLaplacian:extensionequation} to get an ODE of the form
$$\begin{cases}
f_\xi''(y)+\frac{a}{y}f_\xi'(y)=\lambda f_\xi(y)&\hbox{for}~y>0\\
f_\xi(0)=\widehat{u}(\xi)
\end{cases}$$
where $\lambda=|\xi|^2$ and $f_\xi(y)=\widehat{U}(\xi,y)$. This is a Bessel differential equation. The
condition that $U$ weakly vanishes as $y\to\infty$ translates in the fact that $f_\xi(y)\to0$ as $y\to\infty$.
Therefore the unique solution is (see \cite{Stinga, Stinga-Torrea CPDE})
\begin{equation}\label{eqFractionalLaplacian:preFourier}
\widehat{U}(\xi,y)=\frac{2^{1-s}}{\Gamma(s)}(y|\xi|)^s\K_s(y|\xi|)\widehat{u}(\xi)
\end{equation}
where $\K_s$ is the modified Bessel function of the second kind or Macdonald's function (see \cite{Lebedev}).
Notice that $\K_{1/2}(z)=(\frac{\pi}{2z})^{1/2}e^{-z}$ so, when $s=1/2$,
$\widehat{U}(\xi,y)=e^{-y|\xi|}\widehat{u}(\xi)$, which is the classical Poisson semigroup
for the harmonic extension of $u$ to the upper half space \eqref{eq:Poissonsemigroup}. By inverting the Fourier transform
in \eqref{eqFractionalLaplacian:preFourier} we obtain the functional calculus identity
$$U(x,y)=\frac{2^{1-s}}{\Gamma(s)}(y(-\Delta)^{1/2})^s\K_s(y(-\Delta)^{1/2})u(x).$$
Let us recall the following integral formula for the Bessel function (see \cite{Lebedev}):
$$\K_s(z)=\frac{1}{2}\bigg(\frac{z}{2}\bigg)^s\int_0^\infty e^{-t}e^{-z^2/(4t)}\,\frac{dt}{t^{1+s}}.$$
We choose $z=y|\xi|$ and apply the change of variables $y^2/(4t)\to t$ to get
$$\widehat{U}(\xi,y)=\frac{1}{\Gamma(s)}\int_0^\infty e^{-y^2/(4t)} e^{-t|\xi|^2}\big(|\xi|^{2s}\widehat{u}(\xi)\big)\,\frac{dt}{t^{1-s}}.$$
Because of \eqref{eq:heat semigroup pre Fourier}, this is in fact the second to last formula for $U$ in \eqref{eqFractionalLaplacian:U}.

This Bessel function analysis was applied in the extension problem for other fractional operators in
\cite{Bernardis,BDS,Caffarelli-Stinga,DeNapoli-Stinga,Nochetto,Stinga}.

The second identity in \eqref{eqFractionalLaplacian:U} follows from the first one by the change of variables $y^2/(4t)\to t$.
The third one is obtained by computing the Fourier transform of $U$ in $x$ in the first formula
$$\widehat{U}(\xi,y)=\frac{y^{2s}}{4^s\Gamma(s)}\int_0^\infty e^{-y^2/(4t)}e^{-t|\xi|^2}\widehat{u}(\xi)\,\frac{dt}{t^{1+s}}$$
and performing the change of variables $y^2/(4t|\xi|^2)\to t$.
The last convolution formula in \eqref{eqFractionalLaplacian:U},
found for the first time in \cite{Caffarelli-Silvestre CPDE}, follows
immediately from the first one, see \cite{Stinga, Stinga-Torrea CPDE} for the details.

Now that several identities for $U$ were given, all the properties established in
Theorem \ref{thmFractionalLaplacian:extensionproblem} are easy to verify.
For example, using that $\Delta e^{t\Delta}u=\partial_te^{t\Delta}u$
plus an integration by parts,
\begin{align*}
\Delta U(x,y) &= \frac{-y^{2s}}{4^s\Gamma(s)}\int_0^\infty \partial_t\bigg(\frac{e^{-y^2/(4t)}}{t^{1+s}}\bigg) e^{t\Delta}u(x)\,dt \\
&= -\tfrac{a}{y}U_y(x,y)-U_{yy}(x,y).
\end{align*}
The second identity in \eqref{eqFractionalLaplacian:U} immediately gives
that $\lim_{y\to0^+}U(x,y)=u(x)$.

Estimates for $U$ in terms of $u$ are easy to obtain by using the semigroup formulas
and the fact that, for any $y>0$,
\begin{equation}\label{eqFractionalLaplacian:uno}
\frac{y^{2s}}{4^s\Gamma(s)}\int_0^\infty e^{-y^2/(4t)}\,\frac{dt}{t^{1+s}}=1.
\end{equation}
For example, it is readily seen that $\|U(\cdot,y)\|_{L^p(\R^n)}\leq\|u\|_{L^p(\R^n)}$, for all $y\geq0$,
for any $1\leq p\leq\infty$.

By using \eqref{eqFractionalLaplacian:uno} and the semigroup
formula for the fractional Laplacian \eqref{eqFractionalLaplacian:semigroupformula},
\begin{align*}
y^aU_y(x,y) &= \frac{y^{1-2s}}{4^s\Gamma(s)}\int_0^\infty\partial_y\big(y^{2s}e^{-y^2/(4t)}\big)\big(e^{t\Delta}u(x)-u(x)\big)\,\frac{dt}{t^{1+s}} \\
&\longrightarrow\frac{2s}{4^s\Gamma(s)}\int_0^\infty\big(e^{t\Delta}u(x)-u(x)\big)\,\frac{dt}{t^{1+s}}
=-\frac{\Gamma(1-s)}{4^{s-1/2}\Gamma(s)}(-\Delta)^su(x)
\end{align*}
as $y\to0^+$.
Similarly, one can check that \eqref{eqFractionalLaplacian:limit2} holds.

At this point the reader can observe that the semigroup approach gives not only clear proofs,
but can also avoid the use of the Fourier transform and the special symmetries of the Laplacian.
Indeed, it relies only on heat semigroups and kernels.
In addition, as we have already mentioned, the methods have a wide applicability in a variety of different contexts.

The extension problem can be written in an equivalent way as an extension problem for the negative powers of the fractional
Laplacian $(-\Delta)^{-s}$. This is an immediate consequence of the third formula for $U$ in
\eqref{eqFractionalLaplacian:U} and the results of Theorem \ref{thmFractionalLaplacian:extensionproblem},
see \cite{Stinga,Stinga-Torrea CPDE}. For such explicit statement for negative powers $L^{-s}$ in other contexts
like manifolds and discrete settings, see \cite{CRSTV,DeNapoli-Stinga,Feo-Stinga-Volzone,Maldonado-Stinga}. 

\begin{thm}[Extension problem for negative powers]
Let $f\in\mathcal{S}$. The unique smooth solution $U=U(x,y):\R^n\times[0,\infty)\to\R$ to the Neumann extension problem
$$\begin{cases}
\Delta U+\frac{a}{y}U_y+U_{yy}=0&\hbox{in}~\R^n\times(0,\infty)\\
-y^aU_y(x,y)\big|_{y=0}=f(x)&\hbox{on}~\R^n
\end{cases}$$
that weakly vanishes as $y\to\infty$ is given by the formula
$$U(x,y)=\frac{1}{\Gamma(s)}\int_0^\infty e^{-y^2/(4t)}e^{t\Delta}f(x)\,\frac{dt}{t^{1-s}}.$$
Moreover,
$$\lim_{y\to0^+}U(x,y)=\frac{4^{s-1/2}\Gamma(s)}{\Gamma(1-s)}(-\Delta)^{-s}f(x).$$
\end{thm}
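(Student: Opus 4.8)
The plan is to reduce everything to the already-established extension theorem for positive powers (Theorem~\ref{thmFractionalLaplacian:extensionproblem}) via the substitution $u=(-\Delta)^{-s}f$. First I would set $u(x)=(-\Delta)^{-s}f(x)$, which makes sense for $f\in\S$ (modulo the usual caveat $0<s<n/2$ that one tacitly assumes here, or one works at the level of Fourier multipliers where $|\xi|^{-2s}\widehat f(\xi)$ is locally integrable). The key observation is the third formula for $U$ in \eqref{eqFractionalLaplacian:U}: since $(-\Delta)^s u=(-\Delta)^s(-\Delta)^{-s}f=f$, that formula reads exactly
$$U(x,y)=\frac{1}{\Gamma(s)}\int_0^\infty e^{-y^2/(4t)}e^{t\Delta}f(x)\,\frac{dt}{t^{1-s}},$$
which is the claimed representation. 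So the function in the statement is precisely the solution $U$ from Theorem~\ref{thmFractionalLaplacian:extensionproblem} applied to the Schwartz-class (or at least admissible) datum $u=(-\Delta)^{-s}f$.

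Next I would verify that this $U$ solves the \emph{Neumann} problem. That it satisfies the degenerate equation $\Delta U+\frac ay U_y+U_{yy}=0$ in $\R^n\times(0,\infty)$ is immediate from Theorem~\ref{thmFractionalLaplacian:extensionproblem}. For the boundary condition, apply \eqref{eqFractionalLaplacian:limit1} with $u$ replaced by $(-\Delta)^{-s}f$:
$$-\lim_{y\to0^+}y^aU_y(x,y)=\frac{\Gamma(1-s)}{4^{s-1/2}\Gamma(s)}(-\Delta)^s\big((-\Delta)^{-s}f\big)(x)=\frac{\Gamma(1-s)}{4^{s-1/2}\Gamma(s)}\,f(x).$$
To land on exactly $f(x)$ one then absorbs the constant: either rescale $f$, or — more honestly — observe that the Neumann datum in the theorem's statement should be read with this normalization in mind; in the cleanest formulation one states $-y^aU_y|_{y=0}=\frac{\Gamma(1-s)}{4^{s-1/2}\Gamma(s)}f$ and the displayed formula for $U$ is exact, or alternatively one replaces $f$ by $\frac{4^{s-1/2}\Gamma(s)}{\Gamma(1-s)}f$ inside the integral. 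Finally, the limit at $y=0$ follows from the second formula in \eqref{eqFractionalLaplacian:U} (which gives $\lim_{y\to0^+}U(x,y)=u(x)$ for general admissible data), hence $\lim_{y\to0^+}U(x,y)=(-\Delta)^{-s}f(x)$, times the same constant factor, yielding the stated identity $\lim_{y\to0^+}U(x,y)=\frac{4^{s-1/2}\Gamma(s)}{\Gamma(1-s)}(-\Delta)^{-s}f(x)$.

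The remaining point is \textbf{uniqueness}. Here I would argue exactly as in the Dirichlet case: take the Fourier transform in $x$, reducing to the Bessel ODE $f_\xi''(y)+\frac ay f_\xi'(y)=|\xi|^2 f_\xi(y)$ for $y>0$, whose solution space is spanned by $(y|\xi|)^s\K_s(y|\xi|)$ and $(y|\xi|)^s I_s(y|\xi|)$. The condition that $U$ weakly vanishes as $y\to\infty$ kills the $I_s$ branch (which grows exponentially), leaving $\widehat U(\xi,y)=c(\xi)(y|\xi|)^s\K_s(y|\xi|)$; the Neumann condition $-y^aU_y|_{y=0}=f$ then pins down $c(\xi)$ via the small-$y$ asymptotics $(y|\xi|)^s\K_s(y|\xi|)\sim 2^{s-1}\Gamma(s)$ and $-y^a\partial_y[(y|\xi|)^s\K_s(y|\xi|)]\to 2^{-s}\Gamma(1-s)|\xi|^{2s}$ as $y\to0^+$. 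I expect the main obstacle to be bookkeeping of exactly these Bessel-function constants and confirming that "weakly vanishes as $y\to\infty$" is the precise hypothesis that excludes the $I_s$-solution in the appropriate topology; everything else is a direct transcription of Theorem~\ref{thmFractionalLaplacian:extensionproblem} through the map $f\mapsto(-\Delta)^{-s}f$.
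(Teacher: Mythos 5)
Your proposal is correct and follows precisely the paper's route, which the text itself describes as ``an immediate consequence of the third formula for $U$ in \eqref{eqFractionalLaplacian:U} and the results of Theorem~\ref{thmFractionalLaplacian:extensionproblem}'': set $u=(-\Delta)^{-s}f$, substitute into \eqref{eqFractionalLaplacian:U}, read off the Dirichlet and Neumann limits from \eqref{eqFractionalLaplacian:limit1}, and reuse the same Bessel-ODE uniqueness argument. Your flag about the constants is also correct: as stated, the displayed formula for $U$ gives $-y^aU_y|_{y=0}=\frac{\Gamma(1-s)}{4^{s-1/2}\Gamma(s)}f(x)$ and $\lim_{y\to0^+}U(x,y)=(-\Delta)^{-s}f(x)$, so the three displays in the theorem are mutually consistent only after the renormalization you describe (either multiply $U$ by $\frac{4^{s-1/2}\Gamma(s)}{\Gamma(1-s)}$, or adjust the Neumann datum).
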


We consider next weak solutions to the extension problem. It is easy to
check that if $u\in\S$ then
$$[u]_{H^s(\R^n)}^2\equiv\|(-\Delta)^{s/2}u\|_{L^2(\R^n)}^2=\frac{c_{n,s}}{2}\int_{\R^n}\int_{\R^n}\frac{(u(x)-u(z))^2}{|x-z|^{n+2s}}\,dx\,dz.$$
The fractional Sobolev space $H^s(\R^n)$, $0<s<1$, is defined as the completion of $C^\infty_c(\R^n)$
under the norm
$$\|u\|_{H^s(\R^n)}^2=\|u\|^2_{L^2(\R^n)}+[u]_{H^s(\R^n)}^2.$$
Then $H^s(\R^n)$ is a Hilbert space with inner product
$$\langle u,v\rangle_{H^s(\R^n)}=\langle u,v\rangle_{L^2(\R^n)}
+\frac{c_{n,s}}{2}\int_{\R^n}\int_{\R^n}\frac{(u(x)-u(z))(v(x)-v(z))}{|x-z|^{n+2s}}\,dx\,dz.$$
The dual of $H^s(\R^n)$ is denoted by $H^{-s}(\R^n)$. The definition of the fractional Laplacian can be extended
to functions in $H^s(\R^n)$. For any $u\in H^s(\R^n)$ we define $(-\Delta)^su$ as the element on $H^{-s}(\R^n)$
that acts on $v\in H^s(\R^n)$ via
\begin{align*}
\langle(-\Delta)^su,v\rangle_{H^{-s},H^s} &= \langle (-\Delta)^{s/2}u,(-\Delta)^{s/2}v\rangle_{L^2(\R^n)} \\
&=\frac{c_{n,s}}{2}\int_{\R^n}\int_{\R^n}\frac{(u(x)-u(z))(v(x)-v(z))}{|x-z|^{n+2s}}\,dx\,dz.
\end{align*}

The extension problem can be posed in this $L^2$ setting. Notice that
$$\Delta U+\tfrac{a}{y}U_y+U_{yy}=y^{-a}\dive_{x,y}(y^a\nabla_{x,y}U).$$
The weighted Sobolev space
$$H^1_a\equiv H^1(\R^n\times(0,\infty),y^adxdy)\quad\hbox{where}~a=1-2s$$
is defined as the completion of $C^\infty_c(\R^n\times[0,\infty))$ under the norm
$$\|U\|_{H^1_a}^2=\|U\|_{L^2(\R^n\times(0,\infty),y^adxdy)}^2+\|\nabla_{x,y}U\|_{L^2(\R^n\times(0,\infty),y^adxdy)}^2.$$
Since $a\in(-1,1)$, the weight
$\omega(x,y)=y^a$ belongs to the Muckenhoupt class $A_2(\R^n\times(0,\infty))$, see \cite{Duo} for details
about these weights and \cite{Fabes,Turesson} for weighted Sobolev spaces.
Then $H^1_a$ is a Hilbert space with inner product
$$\langle U,V\rangle_{H^1_a}=\int_0^\infty\int_{\R^n}y^aUV\,dx\,dy+\int_0^\infty\int_{\R^n}y^a\nabla_{x,y}U\cdot\nabla_{x,y}V\,dx\,dy.$$
Given $u\in L^2(\R^n)$ we say that $U\in H^1_a$ is a weak solution to the extension problem \eqref{eqFractionalLaplacian:extensionequation}
if for every $V\in C^\infty_c(\R^n\times(0,\infty))$
$$\int_0^\infty\int_{\R^n}y^a\nabla_{x,y}U\cdot\nabla_{x,y}V\,dx\,dy=0$$
and $\lim_{y\to0^+}U(x,y)=u(x)$ in $L^2(\R^n)$. The proof of the following extension theorem
in weak form is just a simple verification.

\begin{thm}\label{thmFractionalLaplacian:extensionproblemweak}
Let $u\in H^s(\R^n)$. The unique weak solution $U\in H^1_a$ to the extension problem
\eqref{eqFractionalLaplacian:extensionequation} is given by \eqref{eqFractionalLaplacian:U}.
Moreover $U(\cdot,y)\in C^\infty(\R^n\times(0,\infty))\cap C([0,\infty);L^2(\R^n))$ satisfies
\eqref{eqFractionalLaplacian:limit1} and \eqref{eqFractionalLaplacian:limit2} in the sense of $H^{-s}(\R^n)$.
In addition, for any $V\in C^\infty_c(\R^n\times[0,\infty))$,
$$\int_0^\infty\int_{\R^n}y^a\nabla_{x,y}U\cdot\nabla_{x,y}V\,dx\,dy=\frac{\Gamma(1-s)}{4^{s-1/2}\Gamma(s)}
\langle (-\Delta)^su,V(\cdot,0)\rangle_{H^{-s},H^s}$$
and also
$$\mathcal{I}_s(U)\equiv\int_0^\infty\int_{\R^n}y^a|\nabla_{x,y}U|^2\,dx\,dy=\frac{\Gamma(1-s)}{4^{s-1/2}\Gamma(s)}
\int_{\R^n}|(-\Delta)^{s/2}u|^2\,dx.$$
Moreover, $U$ is the unique minimizer of the energy functional
$\mathcal{I}_s(V)$ among all functions $V\in H^1_a$ such that $\lim_{y\to0^+}V(x,y)=u(x)$ in $L^2(\R^n)$.
\end{thm}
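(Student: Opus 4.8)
The plan is to verify directly that the function $U$ given by the formulas in \eqref{eqFractionalLaplacian:U} is the desired weak solution, and then establish uniqueness and the energy identities. First I would check that $U\in H^1_a$ when $u\in H^s(\R^n)$. Using the Fourier-side expression \eqref{eqFractionalLaplacian:preFourier}, namely $\widehat{U}(\xi,y)=\varphi(y|\xi|)\widehat{u}(\xi)$ with $\varphi(\rho)=\frac{2^{1-s}}{\Gamma(s)}\rho^s\K_s(\rho)$, I would compute by Plancherel in $x$ that
$$\int_0^\infty\int_{\R^n}y^a\big(|U|^2+|\nabla_{x,y}U|^2\big)\,dx\,dy
=\int_{\R^n}\bigg(\int_0^\infty y^a\big(|\varphi(y|\xi|)|^2(1+|\xi|^2)+|\xi|^2|\varphi'(y|\xi|)|^2\big)\,dy\bigg)|\widehat{u}(\xi)|^2\,d\xi.$$
After the substitution $\rho=y|\xi|$ the inner integral becomes $|\xi|^{-a-1}\int_0^\infty\rho^a(|\varphi(\rho)|^2+|\varphi'(\rho)|^2)\,d\rho$ for the gradient part (plus a lower-order term), and since $a+1=2-2s=2(1-s)$, the factor $|\xi|^{-a-1}|\xi|^2=|\xi|^{2s}$ appears, giving a finite multiple of $\|u\|_{H^s}^2$. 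The convergence of $\int_0^\infty\rho^a(|\varphi|^2+|\varphi'|^2)\,d\rho$ follows from the known asymptotics of $\K_s$: $\K_s(\rho)\sim C\rho^{-s}$ as $\rho\to0^+$ (so $\varphi(\rho)\to1$ and $\varphi'(\rho)=O(\rho^{\min(2s,2-2s)-1})$, integrable against $\rho^a$ near $0$ since $a>-1$), and exponential decay as $\rho\to\infty$.

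Next I would verify the weak equation. Since $U\in C^\infty(\R^n\times(0,\infty))$ and solves $\Delta U+\frac{a}{y}U_y+U_{yy}=0$ classically there (already established in the discussion preceding the theorem), for any $V\in C^\infty_c(\R^n\times(0,\infty))$ the identity $y^{-a}\dive_{x,y}(y^a\nabla_{x,y}U)=0$ together with integration by parts gives $\int_0^\infty\int_{\R^n}y^a\nabla_{x,y}U\cdot\nabla_{x,y}V=0$ with no boundary contribution. The boundary condition $\lim_{y\to0^+}U(\cdot,y)=u$ in $L^2(\R^n)$ follows from $\widehat{U}(\xi,y)=\varphi(y|\xi|)\widehat{u}(\xi)$, $\varphi(0)=1$, $|\varphi|\le 1$, and dominated convergence in $L^2$. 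The continuity statement $U\in C([0,\infty);L^2(\R^n))$ is proved the same way. For the Neumann-type limits \eqref{eqFractionalLaplacian:limit1} and \eqref{eqFractionalLaplacian:limit2} in the $H^{-s}$ sense, I would test against $v\in H^s(\R^n)$, pass to Fourier variables, and reduce to the numerical limits $\lim_{y\to0^+}\frac{2s}{4^s\Gamma(s)}\int_0^\infty(e^{-y^2/(4t)}-\text{correction})\frac{dt}{t^{1+s}}\cdot(\cdot)$ exactly as in the pointwise computation already carried out in the text for $u\in\S$, now justified in $L^2$ by the same asymptotics of $\varphi$; this gives the constant $\frac{\Gamma(1-s)}{4^{s-1/2}\Gamma(s)}$.

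For the two energy formulas I would integrate by parts allowing $V\in C^\infty_c(\R^n\times[0,\infty))$ (so $V$ need not vanish at $y=0$): the boundary term at $y=0$ is $-\int_{\R^n}\lim_{y\to0^+}\big(y^aU_y(x,y)\big)V(x,0)\,dx$, which by \eqref{eqFractionalLaplacian:limit1} equals $\frac{\Gamma(1-s)}{4^{s-1/2}\Gamma(s)}\langle(-\Delta)^su,V(\cdot,0)\rangle_{H^{-s},H^s}$; the term at $y=\infty$ vanishes by the decay of $U$. Taking a sequence $V_k\to U$ in $H^1_a$ with $V_k(\cdot,0)\to u$ in $H^s$ (such a sequence exists by density of $C^\infty_c(\R^n\times[0,\infty))$) yields $\mathcal{I}_s(U)=\frac{\Gamma(1-s)}{4^{s-1/2}\Gamma(s)}\|(-\Delta)^{s/2}u\|_{L^2}^2$. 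Finally, for uniqueness and minimality: if $U_1,U_2$ are two weak solutions with the same trace $u$, then $W=U_1-U_2\in H^1_a$ has zero trace, hence $W$ lies in the closure of $C^\infty_c(\R^n\times(0,\infty))$ in $H^1_a$, and testing the weak equation against $W$ itself gives $\int y^a|\nabla_{x,y}W|^2=0$, so $W$ is constant, and being in $H^1_a$ it is $0$. For minimality, writing any competitor as $V=U+W$ with $W$ of zero trace, $\mathcal{I}_s(V)=\mathcal{I}_s(U)+2\int y^a\nabla_{x,y}U\cdot\nabla_{x,y}W+\mathcal{I}_s(W)=\mathcal{I}_s(U)+\mathcal{I}_s(W)\ge\mathcal{I}_s(U)$, with equality iff $W\equiv0$. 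The main obstacle is the first step: carefully justifying the $H^1_a$ membership and all the limit passages through the precise small- and large-argument asymptotics of $\K_s$, in particular tracking that the weight exponent $a=1-2s$ is exactly what makes the radial integrals converge and produce the $|\xi|^{2s}$ factor; everything after that is bookkeeping with integration by parts in a Muckenhoupt-weighted space.
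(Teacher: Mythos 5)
Your overall strategy---direct Fourier-side verification using the Bessel representation $\widehat{U}(\xi,y)=\varphi(y|\xi|)\widehat{u}(\xi)$ with $\varphi(\rho)=\frac{2^{1-s}}{\Gamma(s)}\rho^s\K_s(\rho)$---is exactly what the paper intends (it calls the proof ``just a simple verification'' based on the formulas already derived for $u\in\mathcal S$). The verification of the weak PDE, the $L^2$ boundary trace via $\varphi(0)=1$, $0<\varphi\le1$ and dominated convergence, the $H^{-s}$ Neumann limits (justified by the global bound $|\rho^a\varphi'(\rho)|\lesssim 1$, which lets you dominate $(1+|\xi|^2)^{-s}|y^a|\xi|\varphi'(y|\xi|)|^2|\widehat u|^2$ by $C(1+|\xi|^2)^s|\widehat u|^2$), the integration-by-parts identity against $V\in C^\infty_c(\R^n\times[0,\infty))$, and the uniqueness/minimality by orthogonality are all sound, modulo the standard facts that the trace $H^1_a\to H^s(\R^n)$ is bounded and that zero-trace elements of $H^1_a$ are exactly the $H^1_a$-closure of $C^\infty_c(\R^n\times(0,\infty))$.

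There is, however, a gap in your very first step, which you hide by labelling the weighted $L^2$ contribution ``a lower-order term.'' The substitution $\rho=y|\xi|$ gives
$$\int_0^\infty\!\!\int_{\R^n} y^a|U|^2\,dx\,dy=\Big(\int_0^\infty\rho^a|\varphi(\rho)|^2\,d\rho\Big)\int_{\R^n}|\xi|^{2s-2}|\widehat u(\xi)|^2\,d\xi,$$
and the factor $|\xi|^{2s-2}$ is not lower order: it is \emph{more} singular at the origin than the $|\xi|^{2s}$ factor coming from the gradient, and $\int|\xi|^{2s-2}|\widehat u|^2\,d\xi$ is not controlled by $\|u\|_{H^s}^2$. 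Indeed, since $\int_0^\infty\rho^a|\varphi(\rho)|^2\,d\rho$ is a finite positive constant, the radial integral $\int_{|\xi|<1}|\xi|^{2s-2}\,d\xi$ converges only when $n+2s>2$; when $n=1$ and $0<s\le 1/2$ and $\widehat u(0)\neq0$, the left-hand side is $+\infty$, so $U\notin L^2(y^a\,dx\,dy)$ and the claimed $H^1_a$ membership fails. For $n\ge2$ your argument is fine as written. In the remaining low-dimensional range you must either work with the homogeneous seminorm $\|\nabla_{x,y}U\|_{L^2(y^a)}$ (which is all the Dirichlet form actually uses), impose an extra decay hypothesis such as $u\in\dot H^{s-1}$, or restrict to local $H^1_a$ membership---and say so, since as stated you set out to prove full inhomogeneous $H^1_a$ membership. (A smaller cosmetic point: the near-origin order of $\varphi'$ is $O(\rho^{2s-1})$, not $O(\rho^{\min(2s,2-2s)-1})$; this does not affect any of the convergences.)
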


\section{Applications of the extension problem: Harnack inequality and derivative estimates}\label{sec:Harnack}

This section is devoted to show how the extension problem can be used to prove an interior Harnack
inequality and derivative estimates for fractional harmonic functions.
These original ideas are due to Caffarelli--Silvestre \cite{Caffarelli-Silvestre CPDE}
and Caffarelli--Salsa--Silvestre \cite{Caffarelli-Salsa-Silvestre}. Obviously, such results for the fractional 
Laplacian are classical \cite{Landkof}, see also
\cite{Bass-Levin,Bogdan,Caffarelli-Silvestre-CPAM,Kassmann,SSV,Song-Vondracek} for other formulations,
techniques and nonlocal operators. Harnack inequalities using the extension technique for fractional powers of operators
in divergence form were systematically developed in \cite{Stinga-Zhang}. For applications of the extension method
to other fractional operators
see \cite{BDS,Caffarelli-Stinga,DeNapoli-Stinga,Ferrari-Franchi,Maldonado-Stinga,Roncal-Stinga,
Stinga, Stinga-Torrea CPDE, Stinga-Torrea-SIAM,Stinga-Torrea JFA,Stinga-Volzone}. Harnack inequalities
for degenerate elliptic equations like the extension equation \eqref{eqFractionalLaplacian:extensionequation}
were first proved by Fabes, Kenig and Serapioni in \cite{Fabes}.

The following reflection lemma will be needed in the proof of Harnack inequality,
see \cite{Caffarelli-Silvestre CPDE}, also \cite{Stinga-Zhang}.
In what follows $\Omega$ denotes a domain in $\R^n$ that can be unbounded.

\begin{lem}\label{lemFractionalLaplacian:reflection}
Fix $Y>0$. Suppose that a function
$U=U(x,y):\Omega\times(0,Y)\to\R$ satisfies
$$\dive(y^a\nabla_{x,y}U)=0\quad\hbox{in}~\Omega\times(0,Y)$$
in the weak sense, with
$$-\lim_{y\to0^+}y^{a}U_y(x,y)=0\quad\hbox{on}~\Omega.$$
Namely, suppose that $U$ and $\nabla_{x,y}U$ belong to $L^2(\Omega\times(0,Y),y^adxdy)$
and that for every test function $V\in C^\infty_c(\Omega\times[0,Y))$ we have
$$\int_0^\infty\int_\Omega y^a\nabla_{x,y}U\cdot\nabla_{x,y}V\,dx\,dy=0$$
and
$$-\lim_{y\to0^+}\int_\Omega y^{a}U_y(x,y)V(x,y)\,dx=0.$$
Then the even reflection of $U$ in the variable $y$ defined as
$\widetilde{U}(x,y)=U(x,|y|)$, for $y\in(-Y,Y)$, is a weak solution to
$$\dive(|y|^a\nabla_{x,y}\widetilde{U})=0\quad\hbox{in}~\Omega\times(-Y,Y).$$
\end{lem}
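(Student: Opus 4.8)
The plan is to verify directly that $\widetilde U(x,y) = U(x,|y|)$ satisfies the weak formulation
$$\int_{-Y}^{Y}\int_\Omega |y|^a\nabla_{x,y}\widetilde U\cdot\nabla_{x,y}\Phi\,dx\,dy = 0$$
for every test function $\Phi\in C^\infty_c(\Omega\times(-Y,Y))$. First I would record that $\widetilde U$ and $\nabla_{x,y}\widetilde U$ lie in $L^2(\Omega\times(-Y,Y),|y|^a\,dx\,dy)$: this follows from the hypothesis on $U$ together with the chain rule, since $\partial_y\widetilde U(x,y) = \sgn(y)\,U_y(x,|y|)$ away from $y=0$ and $\nabla_x\widetilde U(x,y) = \nabla_x U(x,|y|)$, so the weighted $L^2$ norms over $\Omega\times(-Y,0)$ and $\Omega\times(0,Y)$ each coincide with the one over $\Omega\times(0,Y)$. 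The key point that makes the reflection work is that the weight $|y|^a$ is even and that the Neumann-type condition $-\lim_{y\to0^+}y^aU_y=0$ kills the boundary term produced when one splits the integral at $y=0$.

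The main computation proceeds as follows. Given $\Phi\in C^\infty_c(\Omega\times(-Y,Y))$, split
$$\int_{-Y}^{Y}\int_\Omega |y|^a\nabla_{x,y}\widetilde U\cdot\nabla_{x,y}\Phi\,dx\,dy
= \int_0^{Y}\int_\Omega y^a\nabla_{x,y}\widetilde U\cdot\nabla_{x,y}\Phi\,dx\,dy
+ \int_{-Y}^{0}\int_\Omega (-y)^a\nabla_{x,y}\widetilde U\cdot\nabla_{x,y}\Phi\,dx\,dy.$$
In the second integral perform the change of variables $y\mapsto -y$; using $\nabla_x\widetilde U(x,-y)=\nabla_x U(x,y)$ and $\partial_y\widetilde U(x,-y) = -U_y(x,y)$ (and likewise $\partial_y[\Phi(x,-y)] = -\Phi_y(x,-y)$), the two sign changes in the $y$-derivative term cancel, and this integral becomes $\int_0^Y\int_\Omega y^a\nabla_{x,y}U(x,y)\cdot\nabla_{x,y}\Psi(x,y)\,dx\,dy$ where $\Psi(x,y):=\Phi(x,y)+\Phi(x,-y)$ is the even part of $\Phi$ (doubled). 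Hence the whole integral equals $\int_0^Y\int_\Omega y^a\nabla_{x,y}U\cdot\nabla_{x,y}\Psi\,dx\,dy$. Now $\Psi$ is even in $y$ and $C^\infty$ on $\Omega\times[0,Y)$ with compact support, but it is \emph{not} compactly supported away from $y=0$, so one cannot immediately invoke the weak equation for $U$; instead one approximates $\Psi$ by $\eta_\kappa(y)\Psi(x,y)$ where $\eta_\kappa$ is a smooth cutoff vanishing near $y=0$ and equal to $1$ for $y\ge\kappa$. Testing the weak equation for $U$ against $\eta_\kappa\Psi$ gives
$$\int_0^Y\int_\Omega y^a\eta_\kappa\,\nabla_{x,y}U\cdot\nabla_{x,y}\Psi\,dx\,dy
+ \int_0^Y\int_\Omega y^a\eta_\kappa'(y)\,\Psi\,U_y\,dx\,dy = 0.$$
As $\kappa\to0^+$ the first term tends to $\int_0^Y\int_\Omega y^a\nabla_{x,y}U\cdot\nabla_{x,y}\Psi\,dx\,dy$ by dominated convergence, so it remains to show the error term $\int_0^Y\int_\Omega y^a\eta_\kappa'(y)\Psi\,U_y\,dx\,dy\to0$.

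The hard part is precisely this last step: controlling the cutoff error and showing it vanishes. Since $\eta_\kappa'$ is supported in $(0,\kappa)$ with $\int\eta_\kappa'=1$ and $|\eta_\kappa'|\lesssim 1/\kappa$, the error is a weighted average of $y^aU_y\,\Psi$ over a shrinking strip $\Omega\times(0,\kappa)$; because $\Psi$ is bounded and continuous up to $y=0$ while $\partial_y\Psi(x,0)=0$ (as $\Psi$ is even and smooth), the quantity $\lim_{y\to0^+}y^aU_y(x,y)=0$ in the weak sense provided by the hypothesis is exactly what forces this average to $0$. Concretely, write $\Psi(x,y)=\Psi(x,0)+O(y^2)$ and use that $\int_\Omega y^aU_y(x,y)W(x)\,dx\to 0$ as $y\to0^+$ for fixed test functions $W$, together with the local $L^2(y^a\,dx\,dy)$ bound on $\nabla_{x,y}U$ to handle the $O(y^2)$ remainder; one may need to first mollify $\Psi(\cdot,0)$ in $x$ to legitimately use the weak Neumann condition, then remove the mollification. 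Once the error term is shown to vanish we conclude $\int_{-Y}^Y\int_\Omega|y|^a\nabla_{x,y}\widetilde U\cdot\nabla_{x,y}\Phi\,dx\,dy=0$ for all admissible $\Phi$, which is the assertion of the lemma.
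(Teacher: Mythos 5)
Your reduction of the weak formulation for $\widetilde U$ to
$$\int_0^Y\!\!\int_\Omega y^a\,\nabla_{x,y}U\cdot\nabla_{x,y}\Psi\,dx\,dy=0,\qquad \Psi(x,y):=\Phi(x,y)+\Phi(x,-y),$$
via the change of variables $y\mapsto-y$ on the $y<0$ half is exactly the right computation, and the observation that the two minus signs cancel in the $y$-derivative term is the heart of the matter. However, the cutoff argument you append is superfluous here. The hypothesis of the lemma is that the weak equation holds for all $V\in C^\infty_c(\Omega\times[0,Y))$ --- test functions with compact support in the \emph{half-open} set $\Omega\times[0,Y)$, which may be nonzero at $y=0$. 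Since $\Phi\in C^\infty_c(\Omega\times(-Y,Y))$, its even part $\Psi$ is smooth, even in $y$, and has compact support in $\Omega\times[0,Y)$; so $\Psi$ is already an admissible test function, and the first hypothesis gives the vanishing of the reduced integral at once. In other words, the explicit hypothesis ``weak equation for test functions up to $y=0$'' is precisely the variational (weak Neumann) formulation that your cutoff argument re-derives from the interior weak equation together with the limit condition $\lim_{y\to 0^+}\int_\Omega y^aU_yV\,dx=0$. Your detour is correct and would genuinely be needed under the weaker hypothesis (interior weak equation plus Neumann limit), but given the lemma as stated it can be dropped, shortening the proof to: split at $y=0$, reflect, observe the integrand reduces to the even part of $\Phi$, and test. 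One small remark: the mollification of $\Psi(\cdot,0)$ in $x$ you mention at the end is never needed --- $\Psi$ is already $C^\infty$ in all variables --- and the $\Psi=\Psi(\cdot,0)+O(y^2)$ Taylor expansion can also be avoided, since the Neumann-limit hypothesis applies directly with $V=\Psi$ (it allows $y$-dependence in the test function).
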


\begin{thm}[Interior Harnack inequality]\label{eqFractionalLaplacian:interiorHarnack}
Let $\Omega'$ be a domain such that $\Omega'\subset\subset\Omega$.
There exists a constant $c=c(\Omega,\Omega',s)>0$ such that for any solution $u\in H^s(\R^n)$ to
$$\begin{cases}
(-\Delta)^su=0&\hbox{in}~\Omega\\
u\geq0&\hbox{in}~\R^n
\end{cases}$$
we have
$$\sup_{\Omega'}u\leq c\inf_{\Omega'}u.$$
Moreover, solutions $u\in H^s(\R^n)$ to $(-\Delta)^su=0$ in $\Omega$ are locally bounded
and locally $\alpha$-H\"older continuous in $\Omega$, for some exponent
$0<\alpha<1$ that depends only on $n$ and $s$. More precisely, for any
compact set $K\subset\Omega$ there exists $C=C(c,K,\Omega)>0$ such that 
$$\|u\|_{C^{0,\alpha}(K)}\leq C\|u\|_{L^2(\R^n)}.$$
 If, in addition, $u\in L^\infty(\R^n)$ then
$$[u]_{C^{\alpha}(K)}\leq C\|u\|_{L^\infty(\R^n)}.$$
\end{thm}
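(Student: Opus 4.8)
The plan is to transfer the problem from the nonlocal equation $(-\Delta)^su=0$ in $\Omega$ to the local degenerate elliptic equation in one extra dimension, and then apply the classical Fabes--Kenig--Serapioni theory \cite{Fabes} for equations with $A_2$-weights. First I would take the weak solution $u\in H^s(\R^n)$ and form its extension $U\in H^1_a$ as in Theorem \ref{thmFractionalLaplacian:extensionproblemweak}, given by the semigroup formulas \eqref{eqFractionalLaplacian:U}. By that theorem, $U$ solves $\dive(y^a\nabla_{x,y}U)=0$ in $\R^n\times(0,\infty)$ in the weak sense, $U(\cdot,0)=u$, and the Neumann data $-\lim_{y\to0^+}y^aU_y(x,y)$ equals a positive multiple of $(-\Delta)^su$, which vanishes in $\Omega$. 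Hence the hypotheses of the reflection Lemma \ref{lemFractionalLaplacian:reflection} are satisfied on $\Omega\times(0,Y)$ for any $Y>0$, and the even reflection $\widetilde U(x,y)=U(x,|y|)$ is a weak solution to $\dive(|y|^a\nabla_{x,y}\widetilde U)=0$ in the full cylinder $\Omega\times(-Y,Y)\subset\R^{n+1}$.

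Next I would invoke the regularity theory for degenerate elliptic equations with weights in the Muckenhoupt class $A_2(\R^{n+1})$. Since $a\in(-1,1)$, the weight $\omega(x,y)=|y|^a$ lies in $A_2(\R^{n+1})$ (viewing $|y|^a$ as a function on $\R^{n+1}$), so by \cite{Fabes} nonnegative weak solutions of $\dive(\omega\nabla\widetilde U)=0$ satisfy the interior Harnack inequality, and all weak solutions are locally bounded and locally Hölder continuous with exponent $\alpha=\alpha(n,s)\in(0,1)$, with the corresponding estimates on compact subsets controlled by a weighted $L^2$ norm of $\widetilde U$ on a slightly larger set. Crucially, $u\geq0$ on all of $\R^n$ implies $U\geq0$ on $\R^n\times[0,\infty)$ — this follows directly from the last (Poisson-kernel) formula in \eqref{eqFractionalLaplacian:U}, whose kernel is strictly positive — and therefore $\widetilde U\geq0$ on $\R^n\times\R$. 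Restricting the Harnack inequality for $\widetilde U$ on a cylinder $\Omega'\times(-Y,Y)$ to the slice $y=0$ yields $\sup_{\Omega'}u\leq c\inf_{\Omega'}u$ with $c=c(\Omega,\Omega',s)$, after covering $\Omega'$ by finitely many balls and chaining the estimates. Similarly, the interior Hölder estimate for $\widetilde U$ restricted to $\{y=0\}$ gives the local Hölder continuity of $u$ in $\Omega$, and the $L^2$-bound $\|\widetilde U\|_{L^2(\Omega''\times(-Y,Y),|y|^adxdy)}\le C\|U\|_{H^1_a}\le C\|u\|_{H^s(\R^n)}$, together with $\|u\|_{H^s(\R^n)}\lesssim\|u\|_{L^2(\R^n)}$ for $s$-harmonic functions (or simply absorbing it into the constant since the statement is on compact $K$), produces $\|u\|_{C^{0,\alpha}(K)}\le C\|u\|_{L^2(\R^n)}$. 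For the last assertion, if $u\in L^\infty(\R^n)$ then the Poisson-kernel formula and \eqref{eqFractionalLaplacian:uno}-type normalization give $\|U\|_{L^\infty(\R^n\times[0,\infty))}\le\|u\|_{L^\infty(\R^n)}$, so $\|\widetilde U\|_{L^\infty}\le\|u\|_{L^\infty(\R^n)}$ and the De Giorgi--Nash--Moser-type oscillation estimate from \cite{Fabes} gives $[u]_{C^\alpha(K)}\le C\|u\|_{L^\infty(\R^n)}$.

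The main obstacle is making the reduction to \cite{Fabes} fully rigorous at the level of weak solutions: one must verify that the extension $U$ from Theorem \ref{thmFractionalLaplacian:extensionproblemweak}, which a priori is only known to solve the extension problem on $\R^n\times(0,\infty)$ with global data, restricts to a local weak solution on $\Omega\times(0,Y)$ with the Neumann condition holding in the precise distributional sense required by Lemma \ref{lemFractionalLaplacian:reflection} (i.e. $\lim_{y\to0^+}\int_\Omega y^aU_y V\,dx=0$ for test functions supported in $\Omega\times[0,Y)$), and that $U$ and $\nabla_{x,y}U$ have the local integrability $L^2(\Omega\times(0,Y),y^adxdy)$ needed there. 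This follows from the stated properties of $U$ — in particular from \eqref{eqFractionalLaplacian:limit1} interpreted in $H^{-s}(\R^n)$ and the fact that $(-\Delta)^su=0$ in $\Omega$ — but it requires a careful localization argument, cutting off test functions to be supported in $\Omega\times[0,Y)$ and checking that the boundary term vanishes because the Neumann data is zero there. A secondary technical point is verifying $|y|^a\in A_2(\R^{n+1})$ uniformly so that the constants in \cite{Fabes} depend only on $n$, $s$, and the geometry; this is standard since $a\in(-1,1)$. Once these are in place, everything else is a routine transcription of the classical interior estimates of \cite{Fabes} from the cylinder to its bottom face, using the positivity and boundedness of $U$ read off from the explicit formulas in \eqref{eqFractionalLaplacian:U}.
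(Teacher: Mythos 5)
Your proposal is correct and follows essentially the same route as the paper's own sketch: extend $u$ to $U$ via Theorem \ref{thmFractionalLaplacian:extensionproblemweak}, read off $U\geq0$ from the explicit formulas \eqref{eqFractionalLaplacian:U}, apply the reflection Lemma \ref{lemFractionalLaplacian:reflection}, invoke the interior Harnack/H\"older theory of Fabes--Kenig--Serapioni for $A_2$-weighted degenerate elliptic equations, and restrict to $\{y=0\}$, using \eqref{eqFractionalLaplacian:uno} for the $L^\infty$ bound. The technical point you flag about carefully localizing the weak formulation to $\Omega\times(0,Y)$ is indeed the main thing the paper's sketch elides, and your discussion of it is accurate.
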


\begin{proof}[Sketch of proof]
Let $U$ be the extension of $u$ given by Theorem \ref{thmFractionalLaplacian:extensionproblemweak}.
If $u\geq0$ in $\R^n$ then, by any of the formulas in \eqref{eqFractionalLaplacian:U},
$U\geq0$. Moreover, $U$ verifies the hypotheses of Lemma \ref{lemFractionalLaplacian:reflection}.
Hence the reflection $\widetilde{U}$ is a nonnegative weak solution to
$$\dive(|y|^a\nabla_{x,y}\widetilde{U})=0\quad\hbox{in}~\Omega\times(-2,2).$$
The interior Harnack inequality for divergence form degenerate elliptic equations
with $A_2$ weights applies to $\widetilde{U}$ (see \cite{Fabes}), and hence, to $u$.
The estimates follow from \eqref{eqFractionalLaplacian:U} and \eqref{eqFractionalLaplacian:uno}.
\end{proof}

The extension equation in \eqref{eqFractionalLaplacian:extensionequation} is translation
invariant in the variable $x$.
Thus, an argument based on Caffarelli's incremental quotient lemma \cite[Lemma~5.6]{Caffarelli-Cabre}
can be used to prove interior derivative estimates for fractional harmonic functions. Details
of this argument are found in \cite{Caffarelli-Salsa-Silvestre}. For similar techniques
applied to parabolic problems, see \cite{Stinga-Torrea-SIAM}.

\begin{cor}[Interior derivative estimates]\label{corFractionalLaplacian:derivativeestiamtes}
Let $u\in H^s(\R^n)\cap L^\infty(\R^n)$ be a solution to
$$(-\Delta)^su=0\quad\hbox{in}~B_2.$$
Then $u$ is smooth in the interior of $B_2$ and,
for any multi-index $\gamma\in\N_0^n$ there is a constant $C=C(|\gamma|,n,s)>0$ such that
$$\sup_{x\in B_1}|D^\gamma u(x)|\leq C\|u\|_{L^\infty(\R^n)}.$$
\end{cor}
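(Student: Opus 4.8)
The plan is to pass to the Caffarelli--Silvestre extension, reflect evenly across $\{y=0\}$ to remove the Neumann condition, and then bootstrap regularity by a difference-quotient argument in the $x$ variables. First I would let $U$ be the extension of $u$ furnished by Theorem~\ref{thmFractionalLaplacian:extensionproblemweak}. Since $(-\Delta)^su=0$ in $B_2$, the Neumann limit $-\lim_{y\to0^+}y^aU_y(x,y)=0$ holds weakly on $B_2$, so Lemma~\ref{lemFractionalLaplacian:reflection} applies and the even reflection $\widetilde U(x,y):=U(x,|y|)$ is a weak solution of $\dive(|y|^a\nabla_{x,y}\widetilde U)=0$ in the cylinder $Q_2:=B_2\times(-2,2)$. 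Moreover, as noted right after \eqref{eqFractionalLaplacian:uno}, one has $\|U(\cdot,y)\|_{L^\infty(\R^n)}\le\|u\|_{L^\infty(\R^n)}$ for every $y\ge0$, whence $\|\widetilde U\|_{L^\infty(Q_2)}\le\|u\|_{L^\infty(\R^n)}$.

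Next I would invoke the interior De Giorgi--Nash--Moser theory for divergence-form degenerate elliptic equations with $A_2$ weights of Fabes, Kenig and Serapioni \cite{Fabes} --- the same tool used in the proof of Theorem~\ref{eqFractionalLaplacian:interiorHarnack}: there exist an exponent $0<\alpha<1$ and a constant $C$, both depending only on $n$ and $s$, such that every weak solution $W$ of $\dive(|y|^a\nabla W)=0$ in $Q_2$ obeys $[W]_{C^{0,\alpha}(B_{3/2}\times(-3/2,3/2))}\le C\|W\|_{L^\infty(Q_2)}$; in particular $[\widetilde U]_{C^{0,\alpha}}\le C\|u\|_{L^\infty(\R^n)}$ on that smaller cylinder. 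Since the equation $\dive(|y|^a\nabla_{x,y}\,\cdot\,)=0$ is invariant under translations in the $x$ variables, for each $e\in\R^n$ and each small $h\neq0$ the incremental quotient $\widetilde U_{h,e}:=\bigl(\widetilde U(\cdot+he,\cdot)-\widetilde U\bigr)/|h|$ is again a weak solution of the same equation in a slightly smaller cylinder. This scale-invariant Hölder estimate, valid for all solutions simultaneously, is the engine of the bootstrap.

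Then I would apply Caffarelli's incremental quotient lemma \cite[Lemma~5.6]{Caffarelli-Cabre}, exactly as in \cite{Caffarelli-Salsa-Silvestre}, which converts the scale-invariant $C^{0,\alpha}$ bound into a genuine gain of differentiability: after finitely many iterations one obtains $\nabla_x\widetilde U\in L^\infty$ on a slightly smaller cylinder with $\|\nabla_x\widetilde U\|_{L^\infty}\le C\|u\|_{L^\infty(\R^n)}$. Since each $\partial_{x_i}\widetilde U$ is itself a bounded weak solution of the same degenerate equation, the scheme can be repeated; after $|\gamma|$ rounds --- together with a routine covering-and-rescaling step (legitimate because $(-\Delta)^s$ is homogeneous of order $2s$, so a rescaled copy of $u$ again lies in $H^s(\R^n)\cap L^\infty(\R^n)$ and is $s$-harmonic in a large enough ball) to restore the domain to $B_1\times(-1,1)$ --- one gets $D_x^\gamma\widetilde U\in L^\infty(B_1\times(-1,1))$ with $\|D_x^\gamma\widetilde U\|_{L^\infty(B_1\times(-1,1))}\le C(|\gamma|,n,s)\|u\|_{L^\infty(\R^n)}$. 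Restricting to $y=0$ and using $D^\gamma u(x)=D_x^\gamma\widetilde U(x,0)$ gives the stated bound, and running the same argument on every ball $B_r(x_0)\subset\subset B_2$ yields $u\in C^\infty$ in the interior of $B_2$. Consistently, \eqref{eqFractionalLaplacian:commute} shows $\partial_{x_i}u$ solves $(-\Delta)^s(\partial_{x_i}u)=0$ in $B_2$ as well.

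I expect the main obstacle to be Caffarelli's incremental quotient step. The naive idea --- apply the $C^{0,\alpha}$ estimate to $\bigl(u(\cdot+he_i)-u\bigr)/|h|$ and iterate --- fails, because these quotients are not uniformly bounded as $h\to0$: they grow like $[u]_{C^{0,\alpha}}|h|^{\alpha-1}$. The incremental quotient lemma is precisely the device that circumvents this, via a telescoping summation over dyadic scales that trades the scale-invariant Hölder estimate for actual differentiability; beyond invoking it, the only real work is the bookkeeping of the shrinking cylinders and the dependence of the constants on $|\gamma|$.
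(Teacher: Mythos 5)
Your proposal matches exactly the route the paper indicates for this corollary: pass to the extension, even-reflect across $\{y=0\}$ via Lemma~\ref{lemFractionalLaplacian:reflection}, invoke the Fabes--Kenig--Serapioni interior H\"older estimate for degenerate $A_2$-weighted divergence-form equations, and then exploit $x$-translation invariance together with Caffarelli's incremental quotient lemma \cite[Lemma~5.6]{Caffarelli-Cabre}, as carried out in detail in \cite{Caffarelli-Salsa-Silvestre}. The only cosmetic remark is that the quotient driving the lemma should be $\bigl(\widetilde U(\cdot+he,\cdot)-\widetilde U\bigr)/|h|^\alpha$ rather than $/|h|$, a point you in fact acknowledge in your final paragraph.
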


\section{Semigroup characterization of H\"older and Zygmund spaces}\label{sec:spaces}

As we explained in the introduction, the H\"older and Schauder estimates for the fractional Laplacian can be proved
in a rather quick and elegant way by 
means of a characterization of H\"older and Zygmund spaces in terms of
heat semigroups. In this way one can avoid the use of pointwise formulas or
the Schauder estimates for the Laplacian as done by Silvestre in \cite{Silvestre Thesis, Silvestre CPAM}.
Moreover, the semigroup method allows us 
to reach the endpoint case of $\alpha+2s$ being an integer (where the appropriate regularity
spaces turn out to be different than the rather ``natural'' endpoint Lipschitz or $C^k$ spaces),
and case of $L^\infty$ right hand side.
See \cite{Caffarelli-Stinga,Silvestre Thesis, Silvestre CPAM,Stein,Zygmund} for considerations about endpoint spaces.

Let $\alpha>0$ and take \textit{any} $k\geq\lfloor\alpha/2\rfloor+1$. Define
$$\Lambda^\alpha=\Big\{u\in L^\infty(\R^n):[u]_{\Lambda^\alpha}=\sup_{x\in\R^n,t>0}|t^{k-\alpha/2}\partial_t^ke^{t\Delta}u(x)|<\infty\Big\}$$
under the norm $\|u\|_{\Lambda^\alpha}=\|u\|_{L^\infty(\R^n)}+[u]_{\Lambda^\alpha}$.
It can be seen that this definition is independent of $k$ and that the norms for different $k$ are all equivalent.

The Zygmund space $\Lambda_\ast=\Lambda_\ast^1$ is the set of functions $u\in L^\infty(\R^n)$ such that
$$[u]_{\Lambda_\ast}=\sup_{x,h\in\R^n}\frac{|u(x+h)+u(x-h)-2u(x)|}{|h|}<\infty$$
under the norm $\|u\|_{\Lambda_\ast}=\|u\|_{L^\infty(\R^n)}+[u]_{\Lambda_\ast}$, see \cite{Zygmund}.
Note that $C^{0,1}(\R^n)\subsetneq\Lambda_\ast$ continuously.
For any integer $k\geq2$ we denote
$$\Lambda_\ast^{k}=\big\{u\in C^{k-1}(\R^n):D^\gamma u\in\Lambda_\ast~\hbox{for all}~|\gamma|=k-1\big\}$$
with norm $\|u\|_{\Lambda_\ast^k}=\|u\|_{C^{k-1}(\R^n)}+\max_{|\gamma|=k-1}[D^\gamma u]_{\Lambda_\ast}$.
Then $C^{k-1,1}(\R^n)\subsetneq\Lambda_\ast^k$.

The spaces $\Lambda^\alpha$, given in terms of the rate of growth of
the heat semigroup, coincide with the classical H\"older and Zygmund spaces.
The following result for $0<\alpha<2$ can be found in \cite[Theorem~4*]{Taibleson}
and \cite{Berens-Butzer}.
For any $\alpha>0$ and when $\R^n$ is replaced
by the torus $\mathbb{T}^n$, see for example \cite{Roncal-Stinga}.
In \cite{Stein,Taibleson} a similar characterization is proved by using Poisson semigroups,
and \cite{Stinga-Torrea-SIAM} contains the case of parabolic H\"older--Zygmund spaces.
See \cite{Liu-Sjogren} for the regularity spaces associated with the Ornstein--Uhlenbeck operator,
as \cite{Marta-Torrea} for those related to the fractional powers of the parabolic harmonic oscillator.
 
\begin{thm}\label{thmFractionalLaplacian:LambdaAlpha}
If $\alpha>0$ then
$$\Lambda^\alpha=
\begin{cases}
C^{\lfloor\alpha\rfloor,\alpha-\lfloor\alpha\rfloor}(\R^n)&\hbox{if}~\alpha~\hbox{is not an integer}\\
\Lambda^k_\ast&\hbox{if}~\alpha=k~\hbox{is an integer}
\end{cases}$$
with equivalent norms.
\end{thm}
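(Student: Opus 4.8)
The plan is to prove the two inclusions $\Lambda^\alpha \subseteq X^\alpha$ and $X^\alpha \subseteq \Lambda^\alpha$ separately, where $X^\alpha$ denotes the right-hand side of the claimed identity (H\"older or Zygmund space according to parity of $\alpha$), and then to observe that these inclusions are continuous in the respective norms. A preliminary step is to check that the definition of $\Lambda^\alpha$ is genuinely independent of the choice of $k \geq \lfloor \alpha/2 \rfloor + 1$, since this freedom will be exploited constantly: given $u \in L^\infty$ and $m > k$, the identity $\partial_t^m e^{t\Delta} u = \partial_t^{m-k}(\partial_t^k e^{t\Delta} u)$ together with the semigroup property $\partial_t^k e^{t\Delta} u = \partial_t^k e^{(t/2)\Delta}(e^{(t/2)\Delta} u)$ lets one pass derivatives onto a bounded factor and integrate; conversely, to go from large $k$ to small $k$ one uses that $\partial_t^k e^{t\Delta} u \to 0$ as $t \to \infty$ (because $e^{t\Delta}u$ is bounded and its time derivatives decay) and integrates in $t$ from infinity, picking up the correct powers of $t$ via Beta-function bookkeeping.

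For the inclusion $X^\alpha \subseteq \Lambda^\alpha$, the key computational tool is the representation $\partial_t^k e^{t\Delta} u(x) = \int_{\R^n} (\partial_t^k G_t)(x-z)\, u(z)\, dz$, where $\partial_t^k G_t$ is an explicit linear combination of terms $t^{-k} P_{2j}(x/\sqrt t)\, t^{-n/2} e^{-|x|^2/(4t)}$ with $P_{2j}$ polynomials, and this kernel has \emph{vanishing moments}: $\int_{\R^n} \partial_t^k G_t(z)\, z^\gamma\, dz = 0$ for all multi-indices $\gamma$ with $|\gamma| < 2k$ (this follows by differentiating in $t$ the elementary Gaussian moment identities, or from $\widehat{\partial_t^k G_t}(\xi) = (-|\xi|^2)^k e^{-t|\xi|^2}$, which vanishes to order $2k$ at $\xi = 0$). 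Hence when $u \in C^{\lfloor\alpha\rfloor,\,\alpha-\lfloor\alpha\rfloor}$ one subtracts the Taylor polynomial of $u$ at $x$ of degree $\lfloor\alpha\rfloor$ inside the integral, estimates the remainder by $[u]\,|z|^\alpha$, and uses $\int |\partial_t^k G_t(z)|\,|z|^\alpha\, dz = C\, t^{\alpha/2 - k}$ to get $|\partial_t^k e^{t\Delta} u(x)| \leq C [u]\, t^{\alpha/2-k}$, which is exactly the $\Lambda^\alpha$ bound. The Zygmund endpoint $\alpha = k$ integer is handled the same way but with the second-difference definition of $\Lambda_\ast$ replacing the pointwise H\"older modulus in the remainder estimate.

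For the reverse inclusion $\Lambda^\alpha \subseteq X^\alpha$ — which I expect to be \textbf{the main obstacle} — the idea is to recover $u$ and its derivatives from the semigroup via the telescoping/fundamental-theorem-of-calculus identity
\begin{equation*}
u(x) = e^{t\Delta}u(x) - \sum_{j=1}^{k-1}\frac{(-t)^j}{j!}\partial_t^j e^{t\Delta}u(x)\Big|_{\text{suitable}} + \frac{(-1)^k}{(k-1)!}\int_t^\infty (\tau - t)^{k-1}\partial_\tau^k e^{\tau\Delta}u(x)\, d\tau,
\end{equation*}
or more cleanly one writes $e^{t\Delta}u - u = \int_0^t \partial_\tau e^{\tau\Delta}u\, d\tau$ and iterates, controlling each $\partial_\tau^k e^{\tau\Delta}u$ by the $\Lambda^\alpha$ seminorm $[u]\,\tau^{\alpha/2-k}$; the integrals converge precisely because $k > \alpha/2$. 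One then differentiates in $x$ under the integral sign, using that spatial derivatives commute with $e^{t\Delta}$ and that $\partial_t^k e^{t\Delta}\nabla^\beta u = \nabla^\beta \partial_t^k e^{t\Delta} u$, to produce bounds on $\nabla^\beta u$ for $|\beta| \leq \lfloor\alpha\rfloor$, and finally estimates a difference $\nabla^\beta u(x+h) - \nabla^\beta u(x)$ (or a second difference in the Zygmund case) by splitting the $\tau$-integral at $\tau \sim |h|^2$: for $\tau \gtrsim |h|^2$ one uses smoothness of the heat kernel to gain a factor $|h|/\sqrt\tau$ (or $|h|^2/\tau$), and for $\tau \lesssim |h|^2$ one estimates the two terms separately; balancing the resulting integrals yields the H\"older modulus $|h|^{\alpha-\lfloor\alpha\rfloor}$ with constant $C[u]_{\Lambda^\alpha}$. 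The delicate points are (i) justifying the interchange of $\nabla^\beta_x$ with the improper $\tau$-integral and the limit defining $u$, which needs the decay of $\partial_\tau^k e^{\tau\Delta}u$ and dominated convergence, and (ii) getting the bookkeeping of powers exactly right at the integer endpoint so that one lands in $\Lambda_\ast^k$ rather than the strictly smaller $C^{k-1,1}$; both are carried out in detail in the references \cite{Taibleson,Berens-Butzer,Stein} cited before the statement, and for general $\alpha$ on the torus in \cite{Roncal-Stinga}.
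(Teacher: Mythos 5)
The paper itself does not prove this theorem; it points the reader to \cite{Taibleson,Berens-Butzer} for $0<\alpha<2$, to \cite{Roncal-Stinga} for all $\alpha$ on the torus, and to \cite{Stein,Taibleson} for the Poisson-semigroup analogue. There is therefore no in-paper proof to compare against, and your sketch has to be judged on its own merits.

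The outline you describe (vanishing moments of $\partial_t^k G_t$ plus Taylor subtraction for the inclusion $X^\alpha\subseteq\Lambda^\alpha$; reconstruction from the semigroup followed by splitting a finite difference at time scale $\tau\sim|h|^2$ for $\Lambda^\alpha\subseteq X^\alpha$) is the standard one and is correct in outline, but the reconstruction identity you display is genuinely wrong, not merely ``delicate.'' Anchoring the Taylor remainder at $\tau=\infty$ fails on two counts. First, the integral $\int_t^\infty(\tau-t)^{k-1}\partial_\tau^ke^{\tau\Delta}u\,d\tau$ is not absolutely convergent: the best available decay is $\|\partial_\tau^ke^{\tau\Delta}u\|_\infty\lesssim\tau^{-k}\|u\|_\infty$, so the integrand is of size $\gtrsim\tau^{-1}$ at infinity. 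Second, even for data for which it does converge the formula does not give back $u$: for $u(x)=\sin(x\cdot\xi_0)$ and $k=1$ your right-hand side evaluates to $2e^{-t|\xi_0|^2}\sin(x\cdot\xi_0)\ne u(x)$. The correct identity anchors the remainder at $\tau=0$,
$$u(x)=e^{t\Delta}u(x)+\sum_{j=1}^{k-1}\frac{(-t)^j}{j!}\partial_t^je^{t\Delta}u(x)-\frac{(-1)^{k-1}}{(k-1)!}\int_0^t\tau^{k-1}\partial_\tau^ke^{\tau\Delta}u(x)\,d\tau,$$
which is simply Taylor's theorem for $\tau\mapsto e^{\tau\Delta}u(x)$ expanded at $\tau=t$ and evaluated at $\tau=0$; the remainder integrand is now $\lesssim[u]_{\Lambda^\alpha}\tau^{\alpha/2-1}$ and converges near $0$, and for $k=1$ the formula reduces to $u=e^{t\Delta}u-\int_0^t\partial_\tau e^{\tau\Delta}u\,d\tau$, which is the ``cleaner'' version you mention. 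A second gap: the step ``$\partial_t^ke^{t\Delta}\nabla^\beta u=\nabla^\beta\partial_t^ke^{t\Delta}u$'' presupposes that $\nabla^\beta u$ exists as a function, which for $\alpha>1$ is part of what must be proved; one should instead work with $\nabla^\beta e^{t\Delta}u$ (always smooth), show via the corrected Taylor formula and the $\Lambda^\alpha$ bound that it converges uniformly as $t\to0^+$, and then identify the limit with the distributional derivative. With these two repairs, your sketch matches the proofs in the cited references.
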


\section{H\"older and Schauder estimates with the method of semigroups}\label{sec:regularity}

This Section is devoted to show how the fractional Laplacian interacts with H\"older and Zygmund spaces.
For this we apply the method of semigroups in combination with Theorem \ref{thmFractionalLaplacian:LambdaAlpha}.
We sketch part of the (rather simple) proofs here.
In particular, the technique avoids the use of pointwise formulas, or the boundedness of Riesz transforms
on H\"older spaces, or the H\"older and Schauder estimates for the Laplacian as in \cite{Silvestre Thesis, Silvestre CPAM},
see also \cite{Stinga-Torrea JFA}.
Similar proofs but in different
contexts can be found in \cite{Caffarelli-Stinga,Marta-Torrea,Gatto-Urbina, Ma-Stinga-Torrea-Zhang, Roncal-Stinga, Stinga-Torrea-SIAM}.

The first result establishes that the fractional Laplacian $(-\Delta)^s$ behaves as an operator of order $2s$
in the scale of H\"older spaces.

\begin{thm}[H\"older estimates]
Let $u\in C^{k,\alpha}(\R^n)$, for $k\geq0$ and $0<\alpha\leq1$.
\begin{enumerate}[$(i)$]
\item If $0<2s<\alpha$ then $(-\Delta)^su\in C^{k,\alpha-2s}(\R^n)$ and
$$\|(-\Delta)^su\|_{C^{k,\alpha-2s}(\R^n)}\leq C\|u\|_{C^{k,\alpha}(\R^n)}.$$
\item If $0<\alpha<2s$ and $k\geq1$ then $(-\Delta)^su\in C^{k-1,\alpha-2s+1}(\R^n)$ and
$$\|(-\Delta)^su\|_{C^{k-1,\alpha-2s+1}(\R^n)}\leq C\|u\|_{C^{k,\alpha}(\R^n)}.$$ 
\end{enumerate}
The constants $C>0$ above depend only on $n$, $s$, $k$ and $\alpha$.
\end{thm}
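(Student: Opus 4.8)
The strategy is to use the semigroup characterization of H\"older--Zygmund spaces (Theorem \ref{thmFractionalLaplacian:LambdaAlpha}) as a bridge: translate the H\"older hypothesis on $u$ into growth control on $\partial_t^k e^{t\Delta}u$, then use the semigroup formula for $(-\Delta)^s$ to transfer that control to $(-\Delta)^su$, and finally translate back. By the commutation identity \eqref{eqFractionalLaplacian:commute}, $D^\gamma(-\Delta)^su=(-\Delta)^sD^\gamma u$, so it suffices to treat the top-order derivatives; this reduces case $(i)$ to showing $(-\Delta)^s:\Lambda^\alpha\to\Lambda^{\alpha-2s}$ and case $(ii)$ to showing $(-\Delta)^s:\Lambda^{1+\alpha}\to\Lambda^{1+\alpha-2s}$ (after applying $(-\Delta)^s$ to a derivative $D^\gamma u$ with $|\gamma|=k-1$, which lies in $C^{1,\alpha}=\Lambda^{1+\alpha}$). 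Thus the whole theorem follows from the single clean statement: if $\alpha>0$ and $0<2s<\alpha$ (with $\alpha$ itself not necessarily small), then $(-\Delta)^s:\Lambda^{\alpha}\to\Lambda^{\alpha-2s}$ boundedly. Note $\alpha-2s>0$ in all the relevant reductions, so the target space is always a genuine $\Lambda^\beta$ with $\beta>0$.

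First I would record the key computation. Fix $\beta=\alpha-2s>0$ and pick an integer $m\geq\lfloor\beta/2\rfloor+1$; I want to estimate $t^{m-\beta/2}\partial_t^m e^{t\Delta}(-\Delta)^su(x)$. Using the semigroup formula \eqref{eqFractionalLaplacian:semigroupformula} and the semigroup property $e^{t\Delta}e^{r\Delta}=e^{(t+r)\Delta}$, one writes
$$\partial_t^m e^{t\Delta}(-\Delta)^su(x)=\frac{1}{\Gamma(-s)}\int_0^\infty\big(\partial_t^m e^{(t+r)\Delta}u(x)-\partial_t^m e^{t\Delta}u(x)\big)\,\frac{dr}{r^{1+s}}.$$
Choosing $k\geq\lfloor\alpha/2\rfloor+1$ in the definition of $\Lambda^\alpha$ (so that $k\geq m$), the hypothesis $u\in\Lambda^\alpha$ gives $|\partial_t^k e^{t\Delta}u(x)|\leq C[u]_{\Lambda^\alpha}t^{\alpha/2-k}$; applying this at order $m$ via $\partial_t^m e^{t\Delta}u=e^{(t/2)\Delta}\partial_t^m e^{(t/2)\Delta}u$ together with contractivity, and at order $m+$(something) for the increment, one controls the integrand by $Cr\,(t+r)^{\alpha/2-m-1}$ for $r\lesssim t$ and by $C(t+r)^{\alpha/2-m}$ (plus the analogous bound for the second term) for $r\gtrsim t$. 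Splitting $\int_0^\infty=\int_0^t+\int_t^\infty$ and computing the two elementary integrals in $r$ produces exactly $C[u]_{\Lambda^\alpha}\,t^{\alpha/2-m-s}=C[u]_{\Lambda^\alpha}\,t^{\beta/2-m}$, which is the required growth bound for $(-\Delta)^su\in\Lambda^\beta$. The $L^\infty$ bound $\|(-\Delta)^su\|_{L^\infty}\leq C\|u\|_{C^{0,\alpha}}$ when $2s<\alpha$ follows directly from the pointwise formula of Theorem \ref{thmFractionalLaplacian:pointwiseformulaS} (or from \eqref{eqFractionalLaplacian:semigroupformula} split at $t=1$), since the singular part converges because $2s<\alpha\le\alpha$ and the tail is handled by boundedness of $u$.

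The main obstacle is the bookkeeping around the order of differentiation and the endpoint structure: one must check that the chosen $k$ and $m$ are compatible (i.e. that $k\ge m$, equivalently $\lfloor\alpha/2\rfloor\ge\lfloor(\alpha-2s)/2\rfloor$, which holds), and — more delicately — that when $\alpha-2s$ happens to be an \emph{even integer} the target $\Lambda^{\alpha-2s}$ is the Zygmund space $\Lambda^k_\ast$ rather than $C^{k-1,1}$; the semigroup characterization (Theorem \ref{thmFractionalLaplacian:LambdaAlpha}) is precisely what makes this case go through painlessly, since the semigroup seminorm does not see the distinction. In case $(ii)$ the same argument applies after noting that $u\in C^{k,\alpha}$ with $\alpha<2s$ forces $k\ge1$ and that $D^{k-1}u\in C^{1,\alpha}\subset\Lambda^{1+\alpha}$ with $1+\alpha>2s$ (since $\alpha<2s<\alpha+1$... wait, one needs $1+\alpha>2s$, i.e. $2s-\alpha<1$, which holds because $2s\le 2<1+\alpha$ fails in general — so here one instead uses that $2s<1+\alpha$ holds whenever $s<1$ and $\alpha>2s-1$; when $\alpha\le 2s-1$ one iterates, peeling off one more derivative), after which $(-\Delta)^s D^{k-1}u\in\Lambda^{1+\alpha-2s}=C^{0,\alpha-2s+1}$ gives the claim. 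All the constants produced are of the form $C(n,s,k,\alpha)$, coming from the explicit $c_{n,s}$ in \eqref{eq:cns} and the elementary $r$-integrals. Full details follow the template in \cite{Caffarelli-Stinga,Stinga-Torrea JFA}.
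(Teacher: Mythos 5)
Your proposal takes essentially the same route as the paper: reduce to top-order derivatives via the commutation identity \eqref{eqFractionalLaplacian:commute}, invoke the semigroup characterization of H\"older--Zygmund spaces (Theorem \ref{thmFractionalLaplacian:LambdaAlpha}), write $\partial_t^m e^{t\Delta}(-\Delta)^su$ via the semigroup formula \eqref{eqFractionalLaplacian:semigroupformula} and the semigroup property, and split the $r$-integral at $r=t$. Your computation reproduces the paper's (the paper fixes $m=1$ after reducing to $0<\alpha\le 1$; you allow general $m$, which is the same thing once the reduction is in place).

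One remark on your parenthetical in case $(ii)$, which is confused. You worry about whether $2s<1+\alpha$ and propose an iteration ``peeling off one more derivative'' when $\alpha\le 2s-1$. This is a red herring: if $\alpha\le 2s-1$ then $\alpha-2s+1\le 0$, so the target space $C^{k-1,\alpha-2s+1}(\R^n)$ in the statement of part $(ii)$ is not even defined; that range is implicitly excluded by the theorem. Under the implicit hypothesis $\alpha>2s-1$, one has $1+\alpha>2s$ automatically and your reduction $D^\gamma u\in C^{1,\alpha}\subset\Lambda^{1+\alpha}$, $(-\Delta)^sD^\gamma u\in\Lambda^{1+\alpha-2s}=C^{0,\alpha-2s+1}$ goes through without any iteration. (You should also check the lower-order derivatives $|\gamma|<k-1$ to control the full $C^{k-1,\alpha-2s+1}$ norm, but this is routine since $D^\gamma u$ is then in a higher $\Lambda^{\alpha'}$.) Finally, ``$u\in C^{k,\alpha}$ with $\alpha<2s$ forces $k\ge 1$'' is not a logical implication; $k\ge 1$ is simply part of the hypothesis in $(ii)$.
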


The idea for $(i)$ is as follows. In view of \eqref{eqFractionalLaplacian:commute},
it is enough to prove $(i)$ for $k=0$.
By Theorem \ref{thmFractionalLaplacian:LambdaAlpha},
we only have to show that $(-\Delta)^su\in\Lambda^{\alpha-2s}$
for $u\in\Lambda^\alpha$ and $0<2s<\alpha\leq1$. For this,
by using \eqref{eqFractionalLaplacian:semigroupformula}, one can write
$$\Gamma(-s)t\partial_te^{t\Delta}[(-\Delta)^su](x)=\int_0^\infty t\partial_te^{t\Delta}\big(e^{r\Delta}u(x)-u(x)\big)\,\frac{dr}{r^{1+s}}.$$
On one hand, since $\{e^{t\Delta}\}_{t\geq0}$ is a semigroup and $u\in\Lambda^\alpha$,
\begin{align*}
\bigg|\int_0^t t\partial_te^{t\Delta}\big(e^{r\Delta}u(x)-u(x)\big)\,\frac{dr}{r^{1+s}}\bigg|
&=\bigg|\int_0^tt\partial_te^{t\Delta}\bigg[\int_0^r\partial_\rho e^{\rho\Delta}u(x)\,d\rho\bigg]\,\frac{dr}{r^{1+s}}\bigg| \\
&= t\bigg|\int_0^t\int_0^r\partial_w^2e^{w\Delta}u(x)\big|_{w=t+\rho}\,d\rho\,\frac{dr}{r^{1+s}}\bigg| \\
&\leq C[u]_{\Lambda^\alpha}t^{(\alpha-2s)/2}.
\end{align*}
On the other hand,
\begin{align*}
\bigg|\int_t^\infty t\partial_te^{t\Delta}&\big(e^{r\Delta}u(x)-u(x)\big)\,\frac{dr}{r^{1+s}}\bigg| \\
&\leq t\int_t^\infty|\partial_we^{w\Delta}u(x)\big|_{w=t+r}|\,\frac{dr}{r^{1+s}}
+|t\partial_te^{t\Delta}u(x)|\int_t^\infty\,\frac{dr}{r^{1+s}} \\
&\leq C[u]_{\Lambda^\alpha}t^{(\alpha-2s)/2}.
\end{align*}
If $k\geq1$ then Theorem \ref{thmFractionalLaplacian:LambdaAlpha}
shows that $(ii)$
is a consequence of the fact that $(-\Delta)^s:\Lambda^{k+\alpha}\to\Lambda^{k+\alpha-2s}$.
The latter can be accomplished with parallel arguments to those used to prove $(i)$.

The semigroup formula from Theorem \ref{thmFractionalLaplacian:distributionalsolvability}
and the characterization in Theorem \ref{thmFractionalLaplacian:LambdaAlpha}
permit us to prove the Schauder estimates $(-\Delta)^{-s}:C^\alpha\to C^{\alpha+2s}$
in a rather simple way. This same result, only for the case when $\alpha+2s$ is not an integer,
was obtained using pointwise formulas and the Schauder estimates for the
Laplacian in \cite{Silvestre Thesis, Silvestre CPAM}. In our case
the semigroup method permits us to include the case when $\alpha+2s\in\N$.
See \cite{Caffarelli-Stinga,Marta-Torrea,Gatto-Urbina, Ma-Stinga-Torrea-Zhang, Roncal-Stinga, Stinga-Torrea-SIAM}
for similar proofs in different contexts.

\begin{thm}[Schauder--Zygmund estimates]\label{thmFractionalLaplacian:Schaudercompactsupport}
Let $f\in C^{0,\alpha}(\R^n)$ with compact support, for some $0<\alpha\leq 1$, and define
$u$ as in Theorem \ref{thmFractionalLaplacian:distributionalsolvability}.
\begin{enumerate}[$(i)$]
\item If $\alpha+2s<1$ then $u\in C^{0,\alpha+2s}(\R^n)$ and 
$$\|u\|_{C^{0,\alpha+2s}(\R^n)}\leq C\big(\|u\|_{L^\infty(\R^n)}+\|f\|_{C^{0,\alpha}(\R^n)}\big).$$
\item If $1<\alpha+2s<2$ then $u\in C^{1,\alpha+2s-1}(\R^n)$ and 
$$\|u\|_{C^{1,\alpha+2s-1}(\R^n)}\leq C\big(\|u\|_{L^\infty(\R^n)}+\|f\|_{C^{0,\alpha}(\R^n)}\big).$$
\item If $2<\alpha+2s<3$ then $u\in C^{2,\alpha+2s-2}(\R^n)$ and 
$$\|u\|_{C^{2,\alpha+2s-2}(\R^n)}\leq C\big(\|u\|_{L^\infty(\R^n)}+\|f\|_{C^{0,\alpha}(\R^n)}\big).$$
\item If $\alpha+2s=k$, $k=1,2$, then $u\in\Lambda_\ast^k$ and
$$\|u\|_{\Lambda_\ast^k}\leq C\big(\|u\|_{L^\infty(\R^n)}+\|f\|_{C^{0,\alpha}(\R^n)}\big).$$
\end{enumerate}
The constants $C>0$ above depend only on $n$, $s$ and $\alpha$.

As a direct consequence of the solvability result in
Theorem \ref{thmFractionalLaplacian:distributionalsolvability}, $u$ is the unique
bounded classical solution to $(-\Delta)^su=f$ that vanishes at infinity.
\end{thm}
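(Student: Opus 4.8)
The plan is to reduce all four cases to the single mapping property
$$(-\Delta)^{-s}\colon\Lambda^\alpha\longrightarrow\Lambda^{\alpha+2s},$$
and then read off $(i)$--$(iv)$ from Theorem \ref{thmFractionalLaplacian:LambdaAlpha}, which identifies $\Lambda^{\alpha+2s}$ with the relevant classical H\"older space (when $\alpha+2s\notin\N$) or Zygmund space (when $\alpha+2s=1,2$). First I would record that $f\in C^{0,\alpha}(\R^n)$ embeds continuously into $\Lambda^\alpha$ — with $C^{0,\alpha}(\R^n)=\Lambda^\alpha$ for $0<\alpha<1$ and $C^{0,1}(\R^n)\subsetneq\Lambda_\ast=\Lambda^1$ — so that $[f]_{\Lambda^\alpha}\le C\|f\|_{C^{0,\alpha}}$. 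The compact support of $f$ enters only through Theorem \ref{thmFractionalLaplacian:distributionalsolvability}, which already supplies $u=(-\Delta)^{-s}f\in L^\infty(\R^n)$ with $\|u\|_{L^\infty}\le C_{n,s}\|f\|_{L^\infty}$, the decay $|u(x)|\to0$ as $|x|\to\infty$, and the identity $(-\Delta)^su=f$ in the sense of distributions. Since $0<\alpha+2s<3$, the exponent $\alpha+2s$ falls into exactly one of the alternatives of Theorem \ref{thmFractionalLaplacian:LambdaAlpha}, and that alternative is precisely $(i)$, $(ii)$, $(iii)$ or $(iv)$.

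For the core estimate I would fix an integer $m\ge\lfloor(\alpha+2s)/2\rfloor+1$, large enough that the seminorm $[\,\cdot\,]_{\Lambda^{\alpha+2s}}$ (and $\|\cdot\|_{\Lambda^k_\ast}$ in the endpoint cases) is computed with the $m$-th time derivative of the heat semigroup. Starting from the semigroup formula $(-\Delta)^{-s}f=\frac{1}{\Gamma(s)}\int_0^\infty e^{r\Delta}f\,\frac{dr}{r^{1-s}}$ of Theorem \ref{thmFractionalLaplacian:distributionalsolvability} and using $e^{t\Delta}e^{r\Delta}=e^{(t+r)\Delta}$, differentiating under the integral sign gives
$$\partial_t^m e^{t\Delta}\big[(-\Delta)^{-s}f\big](x)=\frac{1}{\Gamma(s)}\int_0^\infty\big(\partial_w^m e^{w\Delta}f\big)(x)\big|_{w=t+r}\,\frac{dr}{r^{1-s}}.$$
Because the $\Lambda^\alpha$ seminorm does not depend on the chosen order of differentiation and $m\ge1>\alpha/2$, we have $|\partial_w^m e^{w\Delta}f(x)|\le C[f]_{\Lambda^\alpha}\,w^{\alpha/2-m}$ for all $w>0$. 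Inserting this bound and substituting $r=t\rho$ produces
$$\big|\partial_t^m e^{t\Delta}[(-\Delta)^{-s}f](x)\big|\le C[f]_{\Lambda^\alpha}\,t^{(\alpha+2s)/2-m}\int_0^\infty(1+\rho)^{\alpha/2-m}\rho^{s-1}\,d\rho.$$
The remaining $\rho$-integral is a Beta integral, finite because $s>0$ gives convergence at $\rho=0$ and $m>s+\alpha/2$ — which is exactly the choice $m\ge\lfloor(\alpha+2s)/2\rfloor+1$ — gives convergence at $\rho=\infty$. Hence $[(-\Delta)^{-s}f]_{\Lambda^{\alpha+2s}}\le C[f]_{\Lambda^\alpha}\le C\|f\|_{C^{0,\alpha}}$, and adding the $L^\infty$ bound for $u$ yields $\|u\|_{\Lambda^{\alpha+2s}}\le C\big(\|u\|_{L^\infty}+\|f\|_{C^{0,\alpha}}\big)$. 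Theorem \ref{thmFractionalLaplacian:LambdaAlpha} then turns this into the asserted estimates $(i)$--$(iv)$.

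Finally, for the classical-solvability statement I would note that in each of $(i)$--$(iv)$ the regularity just proved is more than enough to apply Theorem \ref{thmFractionaLapalcian:pointwiselessregular} (together with $u\in L^\infty(\R^n)\subset L_s$), so that $(-\Delta)^su$ is a continuous function given by the pointwise integro-differential formula. Since this continuous function agrees with $f$ in the distributional sense by Theorem \ref{thmFractionalLaplacian:distributionalsolvability}, it equals $f$ pointwise, and therefore $u$ is a bounded classical solution of $(-\Delta)^su=f$ with $|u(x)|\to0$. For uniqueness, the difference $w$ of two such solutions is bounded, satisfies $(-\Delta)^sw=0$ pointwise, and vanishes at infinity; if $\sup w=M>0$ it is attained at some point $x_0$, and applying the maximum principle (the Remark following \eqref{eqFractionalLaplacian:integral heat kernel}) to $M-w\ge0$, which vanishes at $x_0$ and is $s$-harmonic, forces $M-w\equiv0$, contradicting the decay; hence $\sup w\le0$, and symmetrically $\inf w\ge0$, so $w\equiv0$.

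The main obstacle will be purely bookkeeping: justifying the differentiation under the integral sign (legitimate precisely because the resulting integral converges absolutely, which uses $0<s<n/2$ and the bound on $\partial_w^m e^{w\Delta}f$) and arranging the exponents so that the Beta integral converges at both endpoints. There is no genuinely hard step — all of the real work is hidden in the semigroup characterization of Theorem \ref{thmFractionalLaplacian:LambdaAlpha}.
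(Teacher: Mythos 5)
Your proposal is correct and follows essentially the same route as the paper: reduce everything, via Theorem~\ref{thmFractionalLaplacian:LambdaAlpha}, to the single mapping property $(-\Delta)^{-s}\colon\Lambda^\alpha\to\Lambda^{\alpha+2s}$, and prove that by differentiating the semigroup formula $\frac{1}{\Gamma(s)}\int_0^\infty e^{r\Delta}f\,\frac{dr}{r^{1-s}}$ under the integral and bounding the resulting Beta-type integral in $r$. The only difference is that you spell out the uniqueness/classical-solution argument (via the maximum-principle Remark), which the paper dispatches as a direct consequence of Theorem~\ref{thmFractionalLaplacian:distributionalsolvability}.
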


From Theorem \ref{thmFractionalLaplacian:LambdaAlpha}, the statement
of Theorem \ref{thmFractionalLaplacian:Schaudercompactsupport} reduces to
show that $(-\Delta)^{-s}:\Lambda^\alpha\to\Lambda^{\alpha+2s}$, and this is
very easy to prove. Indeed,
for any $k\geq\lfloor(\alpha+2s)/2\rfloor+1$,
\begin{align*}
|t^k\partial_t^ke^{t\Delta}[(-\Delta)^{-s}f](x)|
&= C_st^k\bigg|\int_0^\infty\partial_w^ke^{w\Delta}f(x)\big|_{w=t+r}\,\frac{dr}{r^{1-s}}\bigg| \\
& \leq C[f]_{\Lambda^\alpha}t^{(\alpha+2s)/2}.
\end{align*}

With the semigroup method we can prove the Schauder estimates in H\"older--Zygmund spaces
for the case when the right hand side is just bounded, see the details in \cite{Caffarelli-Stinga}.

\begin{thm}[Schauder--H\"older--Zygmund estimates]\label{thmFractionalLaplacian:SchauderBounded}
Let $f\in L^\infty(\R^n)$ with compact support and
define $u$ as in Theorem \ref{thmFractionalLaplacian:distributionalsolvability}.
\begin{enumerate}[$(i)$]
\item If $2s<1$ then $u\in C^{0,2s}(\R^n)$ and 
$$\|u\|_{C^{0,2s}(\R^n)}\leq C\big(\|u\|_{L^\infty(\R^n)}+\|f\|_{L^\infty(\R^n)}\big).$$
\item If $2s=1$ then $u\in\Lambda_\ast$ and 
$$\|u\|_{\Lambda_\ast}\leq C\big(\|u\|_{L^\infty(\R^n)}+\|f\|_{L^\infty(\R^n)}\big).$$
\item If $2s>1$ then $u\in C^{1,2s-1}(\R^n)$ and 
$$\|u\|_{C^{1,2s-1}(\R^n)}\leq C\big(\|u\|_{L^\infty(\R^n)}+\|f\|_{L^{\infty}(\R^n)}\big).$$
\end{enumerate}
The constants $C>0$ above depend only on $n$ and $s$.

As a direct consequence of the solvability result in
Theorem \ref{thmFractionalLaplacian:distributionalsolvability}, $u$ is the unique
bounded solution to $(-\Delta)^su(x)=f(x)$, for a.e.~$x\in\R^n$, that vanishes at infinity.
\end{thm}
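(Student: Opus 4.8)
The plan is to deduce all parts at once from the heat-semigroup characterization of H\"older--Zygmund spaces, Theorem~\ref{thmFractionalLaplacian:LambdaAlpha}, by exactly the scheme used for Theorem~\ref{thmFractionalLaplacian:Schaudercompactsupport}; the only new feature is that the datum $f$ now carries just an $L^\infty$ bound and no H\"older regularity. Since $0<s<1$ we have $0<2s<2$, hence $\lfloor s\rfloor+1=1$, so in the definition of $\Lambda^{2s}$ we may take $k=1$. By Theorem~\ref{thmFractionalLaplacian:LambdaAlpha} it therefore suffices to show
\[
u=(-\Delta)^{-s}f\in\Lambda^{2s}\quad\text{with}\quad\|u\|_{\Lambda^{2s}}\le C\big(\|u\|_{L^\infty(\R^n)}+\|f\|_{L^\infty(\R^n)}\big),
\]
that is, to bound $\sup_{x\in\R^n,\,t>0}t^{1-s}\,|\partial_t e^{t\Delta}u(x)|$ by $C\|f\|_{L^\infty(\R^n)}$.

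First I would record the elementary heat-kernel bound $\|\Delta e^{w\Delta}g\|_{L^\infty(\R^n)}\le C_n w^{-1}\|g\|_{L^\infty(\R^n)}$ for $w>0$, which holds because $\Delta G_w$ has $L^1(\R^n)$-norm of order $w^{-1}$. Starting from the semigroup formula for $u$ in Theorem~\ref{thmFractionalLaplacian:distributionalsolvability}, the semigroup property, and differentiation under the integral sign (legitimate by dominated convergence, thanks to the bound just recorded), one obtains
\[
t\,\partial_t e^{t\Delta}u(x)=\frac{t}{\Gamma(s)}\int_0^\infty \Delta e^{(t+r)\Delta}f(x)\,\frac{dr}{r^{1-s}}.
\]
By the heat-kernel bound with $w=t+r$ and the substitution $r=t\rho$,
\[
\big|t\,\partial_t e^{t\Delta}u(x)\big|\le \frac{C_n}{\Gamma(s)}\,\|f\|_{L^\infty(\R^n)}\,t\int_0^\infty\frac{dr}{(t+r)\,r^{1-s}}=C_n\,\Gamma(1-s)\,\|f\|_{L^\infty(\R^n)}\,t^{s},
\]
since $\int_0^\infty\frac{dr}{(t+r)r^{1-s}}=t^{s-1}\int_0^\infty\frac{d\rho}{(1+\rho)\rho^{1-s}}=\Gamma(s)\Gamma(1-s)\,t^{s-1}$. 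Hence $t^{1-s}|\partial_t e^{t\Delta}u(x)|\le C_{n,s}\|f\|_{L^\infty(\R^n)}$ uniformly in $x$ and $t$, i.e.\ $[u]_{\Lambda^{2s}}\le C_{n,s}\|f\|_{L^\infty(\R^n)}$; together with $\|u\|_{L^\infty(\R^n)}\le C_{n,s}\|f\|_{L^\infty(\R^n)}$ from Theorem~\ref{thmFractionalLaplacian:distributionalsolvability}, this yields the displayed $\Lambda^{2s}$ estimate. I emphasize that only $\|f\|_{L^\infty(\R^n)}$ enters this computation; the compact support of $f$ is used solely, through Theorem~\ref{thmFractionalLaplacian:distributionalsolvability}, to guarantee that $u$ is well defined and bounded.

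It remains to read off the stated conclusions from Theorem~\ref{thmFractionalLaplacian:LambdaAlpha} with $\alpha=2s$: if $2s<1$ then $\Lambda^{2s}=C^{0,2s}(\R^n)$, which is $(i)$; if $2s=1$ then $\Lambda^{2s}=\Lambda_\ast^1=\Lambda_\ast$, which is $(ii)$; and if $1<2s<2$ then $\lfloor 2s\rfloor=1$ with $2s$ not an integer, so $\Lambda^{2s}=C^{1,2s-1}(\R^n)$, which is $(iii)$. The final assertion, that $u$ is the unique bounded solution of $(-\Delta)^su=f$ almost everywhere that vanishes at infinity, is, as indicated in the statement, a direct consequence of Theorem~\ref{thmFractionalLaplacian:distributionalsolvability}, now combined with the regularity just proved (which upgrades the distributional identity to an a.e.\ pointwise one). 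I expect no serious obstacle inside this argument: the only mildly technical points are the justification of differentiation under the integral and the standard bound on $\|\Delta e^{w\Delta}g\|_{L^\infty(\R^n)}$, both routine. The substantial work lies in the heat-semigroup characterization of Theorem~\ref{thmFractionalLaplacian:LambdaAlpha}, whose proof is nontrivial but which we are free to assume here.
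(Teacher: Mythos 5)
Your argument is correct and coincides with the paper's method: the displayed computation preceding Theorem~\ref{thmFractionalLaplacian:SchauderBounded} is exactly the template you instantiate with $k=1$ and the $L^\infty$ bound on $f$ replacing the $\Lambda^\alpha$ seminorm, and the Beta-integral evaluation $\int_0^\infty (1+\rho)^{-1}\rho^{s-1}\,d\rho=\Gamma(s)\Gamma(1-s)$ gives the claimed decay $t^{1-s}|\partial_te^{t\Delta}u(x)|\le C_{n,s}\|f\|_{L^\infty(\R^n)}$. Combining this with Theorem~\ref{thmFractionalLaplacian:LambdaAlpha} at $\alpha=2s$ and the solvability/uniqueness part of Theorem~\ref{thmFractionalLaplacian:distributionalsolvability} yields all three cases, matching the proof the paper defers to \cite{Caffarelli-Stinga}.
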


For the next result, we do not assume that the right hand side has compact support.
The idea for the proof, see \cite{Silvestre Thesis,Silvestre CPAM}, also \cite{Stinga-Torrea-SIAM} for the parabolic case, is to choose 
$\eta\in C^\infty_c(B_2)$ such that $0\leq\eta\leq1$, $\eta=1$ in $B_1$, $|\nabla\eta|\leq C$ in $\R^n$,
and write $f=\eta f+(1-\eta)f=f_1+f_2$ and $u=u_1+u_2$, where $u_1$ is the solution to
$(-\Delta)^su_1=f_1$ in $\R^n$. Then, since $f_1$ has compact support,
$u_1$ is given as in Theorem \ref{thmFractionalLaplacian:distributionalsolvability}
and, therefore, Theorems \ref{thmFractionalLaplacian:Schaudercompactsupport}
and \ref{thmFractionalLaplacian:SchauderBounded} apply to it. On the other hand,
$(-\Delta)^su_2=0$ in $B_1$,
so by Corollary \ref{corFractionalLaplacian:derivativeestiamtes} we can bound, for any $k$ and $\alpha$,
$$\|u_2\|_{C^{k,\alpha}(B_{1/2})}\leq C\|u-u_1\|_{L^\infty(\R^n)}\leq C\big(\|u\|_{L^\infty(\R^n)}+\|f\|_{L^\infty(\R^n)}\big)$$
where $C>0$ depends only on $n$, $k$ and $\alpha$.
For a proof of part $(a)$ of the following result in the case when $\alpha+2s$ is not an integer
and by using pointwise formulas see \cite{Silvestre Thesis, Silvestre CPAM}.

\begin{thm}[Schauder--Zygmund estimates]
Let $u\in L^\infty(\R^n)$.
\begin{enumerate}[$(a)$]
\item Assume that $(-\Delta)^su=f\in C^{0,\alpha}(\R^n)$ for some $0<\alpha\leq1$.
Then $u$ satisfies the estimates $(i)$--$(iv)$ 
from Theorem \ref{thmFractionalLaplacian:Schaudercompactsupport}.
\item Assume that $(-\Delta)^su=f\in L^\infty(\R^n)$. Then $u$ satisfies the estimates $(i)$--$(iii)$ 
from Theorem \ref{thmFractionalLaplacian:SchauderBounded}.
\end{enumerate}
\end{thm}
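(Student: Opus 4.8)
The plan is to reduce the global estimates to interior ones at an arbitrary point and then to \emph{localize} the equation, so that the two already available pieces of machinery apply directly: the Schauder--Zygmund estimates for compactly supported data (Theorems \ref{thmFractionalLaplacian:Schaudercompactsupport} and \ref{thmFractionalLaplacian:SchauderBounded}) and the interior derivative estimates for $s$-harmonic functions (Corollary \ref{corFractionalLaplacian:derivativeestiamtes}). Since $(-\Delta)^s$ is translation invariant, it suffices to prove the relevant estimate on a fixed small ball $B_{1/4}(x_0)$ with a constant independent of $x_0\in\R^n$; pairs of points with $|x-y|\ge 1/4$ (or increments $|h|\ge 1/4$ in the Zygmund seminorms) are then absorbed by the crude bound $|u(x)-u(y)|\le 2\|u\|_{L^\infty(\R^n)}\le C\|u\|_{L^\infty(\R^n)}|x-y|^{\beta}$, and the analogous estimates after differentiating, which upgrade the local estimates to the asserted global ones. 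Fix $\eta\in C^\infty_c(B_2(x_0))$ with $0\le\eta\le1$, $\eta\equiv 1$ on $B_1(x_0)$ and $|\nabla\eta|\le C$; split $f=f_1+f_2$ with $f_1=\eta f$, $f_2=(1-\eta)f$, and write $u=u_1+u_2$ where $u_1:=(-\Delta)^{-s}f_1$ is the function from Theorem \ref{thmFractionalLaplacian:distributionalsolvability} and $u_2:=u-u_1$.

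For the first piece, $f_1$ is bounded with compact support, so Theorem \ref{thmFractionalLaplacian:distributionalsolvability} gives $u_1\in L^\infty(\R^n)$ with $\|u_1\|_{L^\infty(\R^n)}\le C\|f\|_{L^\infty(\R^n)}$ and $(-\Delta)^su_1=f_1$ distributionally. In case (a) one also has $f_1\in C^{0,\alpha}(\R^n)$ with compact support and $\|f_1\|_{C^{0,\alpha}(\R^n)}\le C\|f\|_{C^{0,\alpha}(\R^n)}$ (product rule for H\"older norms, using $\|\eta\|_{C^{0,1}(\R^n)}\le C$), so Theorem \ref{thmFractionalLaplacian:Schaudercompactsupport} places $u_1$ in the claimed global space $X\in\{C^{0,\alpha+2s}(\R^n),C^{1,\alpha+2s-1}(\R^n),C^{2,\alpha+2s-2}(\R^n),\Lambda_\ast^k\}$ with $\|u_1\|_X\le C(\|u_1\|_{L^\infty(\R^n)}+\|f_1\|_{C^{0,\alpha}(\R^n)})\le C(\|u\|_{L^\infty(\R^n)}+\|f\|_{C^{0,\alpha}(\R^n)})$; in case (b) one invokes Theorem \ref{thmFractionalLaplacian:SchauderBounded} instead, where only $\|f_1\|_{L^\infty(\R^n)}\le\|f\|_{L^\infty(\R^n)}$ is needed. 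For the second piece, $u_2\in L^\infty(\R^n)$ and $(-\Delta)^su_2=f-f_1=f_2$, which vanishes on $B_1(x_0)$; hence $(-\Delta)^su_2=0$ in $B_1(x_0)$. Applying Corollary \ref{corFractionalLaplacian:derivativeestiamtes} in $B_{1/2}(x_0)$ (after translating $x_0$ to the origin and the harmless rescaling that turns the pair $B_2,B_1$ of the corollary into $B_1(x_0),B_{1/2}(x_0)$), the function $u_2$ is $C^\infty$ on $B_{1/2}(x_0)$ and $\sup_{B_{1/2}(x_0)}|D^\gamma u_2|\le C\|u_2\|_{L^\infty(\R^n)}\le C(\|u\|_{L^\infty(\R^n)}+\|f\|_{L^\infty(\R^n)})$ for every $\gamma$; in particular $u_2$ lies in every $C^{k,\beta}(B_{1/4}(x_0))$ with norm controlled by the right-hand side.

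Adding the two pieces, $u=u_1+u_2$ belongs to $X$ on $B_{1/4}(x_0)$ with a bound uniform in $x_0$, and the global conclusion follows by the large-increment argument described above. I expect the only points requiring care to be bookkeeping rather than substance: first, the patching of the $\Lambda_\ast$ and $\Lambda_\ast^k$ seminorms in the endpoint cases $\alpha+2s\in\{1,2\}$ and $2s=1$, which amounts to estimating the second difference $|v(x+h)+v(x-h)-2v(x)|/|h|$ (for $v=u$, respectively $v=D^\gamma u$ with $|\gamma|=k-1$) by the local estimate when $|h|$ is small and by $C\|v\|_{L^\infty(\R^n)}$ when $|h|\ge 1/4$; and second, the (minor) fact that Corollary \ref{corFractionalLaplacian:derivativeestiamtes} is being applied to $u_2$, which is merely $L^\infty$ (equivalently, in $L_s$) with $(-\Delta)^su_2=0$ in $B_1(x_0)$ understood in the distributional sense — this is precisely the setting in which the extension-problem proof of that corollary operates, so no extra work is needed. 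Everything else is a routine combination of Theorems \ref{thmFractionalLaplacian:Schaudercompactsupport} and \ref{thmFractionalLaplacian:SchauderBounded} with Corollary \ref{corFractionalLaplacian:derivativeestiamtes}.
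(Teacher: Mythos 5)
Your proof follows exactly the paper's own argument: the same cutoff $\eta\in C^\infty_c(B_2)$ equal to $1$ on $B_1$, the same splitting $f=\eta f+(1-\eta)f$ and $u=u_1+u_2$ with $u_1=(-\Delta)^{-s}(\eta f)$, the same appeal to Theorems \ref{thmFractionalLaplacian:Schaudercompactsupport}/\ref{thmFractionalLaplacian:SchauderBounded} for $u_1$ and to Corollary \ref{corFractionalLaplacian:derivativeestiamtes} for the $s$-harmonic remainder $u_2$, followed by translation invariance. You merely spell out the bookkeeping (large increments, the H\"older product rule, the endpoint Zygmund seminorms, and the hypothesis mismatch in the corollary) that the paper leaves implicit.
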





\begin{thebibliography}{999}

\bibitem{Abadias} L.~Abadias, M.~De Le\'on-Contreras and J.~L.~Torrea,
{Non-local fractional derivatives.~Discrete and continuous},
\textit{J.~Math.~Anal.~Appl.}
\textbf{449} (2017), 734--755. 

\bibitem{Allen} M.~Allen,
{A fractional free boundary problem related to a plasma problem},
\textit{Comm.~Anal.~Geom.} (to appear).

\bibitem{Applebaum} D.~Applebaum,
\textit{L\'evy Processes and Stochastic Calculus},
Second Edition, Cambridge Studies in Advanced Mathematics \textbf{116},
Cambridge University Press, Cambridge, UK, 2009.

\bibitem{ACM} I.~Athanasopoulos, L.~A.~Caffarelli and E.~Milakis,
{On the regularity of the non-dynamic parabolic fractional obstacle problem},
\textit{J. Differential Equations}
\textbf{265} (2018), 2614--2647.

\bibitem{Auscher} P.~Auscher, S.~Hofmann, M.~Lacey, A.~McIntosh and Ph.~Tchamitchian,
{The solution of the Kato square root problem for second order elliptic operators on $R^n$},
\textit{Ann. of Math. (2)}
\textbf{156} (2002), 633--654.

\bibitem{Balakrishnan} A.~V.~Balakrishnan,
{Fractional powers of closed operators and the semigroups generated by them},
\textit{Pacific J.~Math.}
\textbf{10} (1960), 419--437.

\bibitem{Banica} V.~Banica, M.~d.~M.~Gonz\'alez and M.~S\'aez,
{Some constructions for the fractional Laplacian on noncompact manifolds},
\textit{Rev.~Mat.~Iberoam.}
\textbf{31} (2015), 681--712.

\bibitem{Bass-Levin} R.~F.~Bass and D.~A.~Levin,
{Harnack inequalities for jump processes},
\textit{Potential Anal.}
\textbf{17} (2002), 375--388.

\bibitem{Bere} H.~Berestycki, J.-M.~Roquejoffre and L.~Rossi,
{The influence of a line with fast diffusion on Fisher-KPP propagation},
\textit{J.~Math.~Biol.}
\textbf{66} (2013), 743--766.

\bibitem{Berg} C.~Berg and G.~Forst,
\textit{Potential Theory on Locally Compact Abelian Groups},
in: Ergebnisse der Mathematik und ihrer Grenzgebiete \textbf{87},
Springer-Verlag,
New York-Heidelberg, 1975.

\bibitem{Bernardis} A.~Bernardis, F.~J.~Mart\'in-Reyes, P.~R.~Stinga and J.~L.~Torrea,
{Maximum principles, extension problem and inversion for nonlocal one-sided equations},
\textit{J.~Differential Equations}
\textbf{260} (2016), 6333--6362.

\bibitem{Bertoin} J. Bertoin,
\textit{L\'evy Processes},
Cambridge Tracts in Mathematics \textbf{121},
Cambridge Univ. Press,
Cambridge, 1996.

\bibitem{Betancor} J.~J.~Betancor, A.~J.~Castro and P.~R.~Stinga,
{The fractional Bessel equation in H\"lder spaces},
\textit{J.~Approx.~Theory}
\textbf{184} (2014), 55--99. 

\bibitem{BDS} A.~Biswas, M.~De Le\'on-Contreras and P.~R.~Stinga,
{Harnack inequalities and H\"older estimates for master equations},
arXiv:1806.10072 (2018), 25pp.

\bibitem{Bochner} S.~Bochner,
{Diffusion equation and stochastic processes},
\textit{Proc. Nat. Acad. Sci. U. S. A.}
\textbf{35} (1949), 368--370.

\bibitem{Bochner-book} S.~Bochner,
\textit{Harmonic Analysis and The Theory of Probability},
University of California Press,
Berkeley and Los Angeles, 1955.

\bibitem{BB} K.~Bogdan and T.~Byczkowsi,
{Probabilistic proof of boundary Harnack principle for $\alpha$-harmonic functions},
\textit{Potential Anal.}
\textbf{11} (1999), 135--156.

\bibitem{Bogdan} K.~Bogdan, T.~Byczkowski, T.~Kulczycki, M.~Ryznar, R.~Song and Z.~Vondra\v{c}ek,
\textit{Potential Analysis of Stable Symmetric Processes and its Extensions},
Lecture Notes in Mathematics \textbf{1980},
Springer-Verlag, Berlin, 2009.

\bibitem{Botcher} B.~B\"ottcher, R.~Schilling and J.~Wang,
\textit{L\'evy Matters. III. L\'evy-type Processes: Construction, Approximation and Sample Path Properties},
in: Lecture Notes in Mathematics \textbf{2099},
Springer,
Cham, 2013.

\bibitem{Bueno-Orovio} A.~Bueno-Orovio, D.~Kay, V.~Grau, B.~Rodriguez and K.~Burrage,
{Fractional diffusion models of cardiac electrical propagation: role of structural heterogeneity in dispersion of repolarization},
\textit{Journal of The Royal Society Interface}
\textbf{11}(97):20140352, 2014.

\bibitem{Berens-Butzer} P.~L.~Butzer and H.~Berens,
\textit{Semi-Groups of Operators and Approximation},
Die Grundlehren der mathematischen Wissenschaften, Band 145,
Springer-Verlag New York Inc.,
New York, 1967.

\bibitem{Caffarelli-Cabre} L.~A.~Caffarelli and X.~Cabr\'e,
\textit{Fully Nonlinear Elliptic Equations},
American Mathematical Society Colloquium Publications, \textbf{43},
American Mathematical Society, Providence, RI, 1995. 

\bibitem{Caffarelli-Salsa-Silvestre} L.~A.~Caffarelli, S.~Salsa and L.~Silvestre,
{Regularity estimates for the solution and the free boundary of the obstacle problem for the fractional Laplacian},
\textit{Invent.~Math.}
\textbf{171} (2008), 425--461.

\bibitem{Caffarelli-Silvestre CPDE} L.~A.~Caffarelli and L.~Silvestre,
{An extension problem related to the fractional Laplacian},
\textit{Comm.~Partial Differential Equations}
\textbf{32} (2007), 1245--1260.

\bibitem{Caffarelli-Silvestre-ARMA} L.~A.~Caffarelli and L.~Silvestre,
{Regularity results for nonlocal equations by approximation},
\textit{Arch.~Ration.~Mech.~Anal.}
\textbf{200} (2011), 59--88.

\bibitem{Caffarelli-Silvestre-CPAM} L.~A.~Caffarelli and L.~Silvestre,
{Regularity theory for fully nonlinear integro-differential equations},
\textit{Comm.~Pure Appl.~Math.}
\textbf{62} (2009), 597--638.

\bibitem{Caffarelli-Silvestre-Annals} L.~A.~Caffarelli and L.~Silvestre,
{The Evans-Krylov theorem for nonlocal fully nonlinear equations},
\textit{Ann.~of Math.~(2)}
\textbf{174} (2011), 1163--1187.

\bibitem{Caffarelli-Sire} L.~A.~Caffarelli and Y.~Sire,
{On some pointwise inequalities involving nonlocal operators},
in \textit{Harmonic Analysis, Partial Differential Equations and Applications}, 1--18,
Appl.~Numer.~Harmon.~Anal.,
Birkh\"auser/Springer, Cham, 2017. 

\bibitem{Caffarelli-Stinga} L.~A.~Caffarelli and P.~R.~Stinga,
{Fractional elliptic equations, Caccioppoli estimates and regularity},
\textit{Ann.~Inst.~H.~Poincar\'e Anal.~Non Lin\'eaire}
\textbf{33} (2016), 767--807.

\bibitem{Caffarelli-Vasseur} L.~A.~Caffarelli and A.~Vasseur,
{Drift diffusion equations with fractional diffusion and the quasi-geostrophic equation},
\textit{Ann.~of Math.~(2)}
\textbf{171} (2010), 1903--1930. 

\bibitem{Chamorro} D.~Chamorro and O.~Jarr\'in,
{Fractional Laplacians, extension problems and Lie groups},
\textit{C.~R.~Math.~Acad.~Sci.~Paris}
\textbf{353} (2015), 517--522.

\bibitem{FiveGuys} \'O.~Ciaurri, T.~A.~Gillespie, L.~Roncal, J.~L.~Torrea and J.~L.~Varona,
{Harmonic analysis associated with a discrete Laplacian},
\textit{J. Anal. Math.}
\textbf{132} (2017), 109--131.

\bibitem{CRS} \'O.~Ciaurri, L.~Roncal and P.~R.~Stinga,
{Fractional integrals on compact Riemannian symmetric spaces of rank one},
\textit{Adv.~Math.}
\textbf{235} (2013), 627--647.

\bibitem{CRSTV} \'O.~Ciaurri, L.~Roncal, P.~R.~Stinga, J.~L.~Torrea and J.~L.~Varona,
{Nonlocal discrete diffusion equations and the fractional discrete Laplacian, regularity and applications},
\textit{Adv.~Math.}
\textbf{330} (2018), 688--738.

\bibitem{Constantin} P.~Constantin and M.~Ignatova,
{Critical SQG in bounded domains},
\textit{Ann.~PDE}
\textbf{2} (2016), Art.~8, 42 pp.

\bibitem{Cont-Tankov} R.~Cont and P.~Tankov,
\textit{Financial Modelling with Jump Processes},
Chapman \& Hall/CRC Financial Mathematics Series,
Chapman \& Hall/CRC,
Boca Raton, FL, 2004.

\bibitem{Courrege} P.~Courr\`ege,
{G\'en\'erateur infinit\'esimal d'un semi-groupe de convolution sur $\R^n$, et formule de L\'evy-Khinchine},
\textit{Bull. Sci. Math. (2)}
\textbf{88} (1964), 3--30. 

\bibitem{Marta-Torrea} M.~De Le\'on-Contreras and J.~L.~Torrea,
{Fractional powers of the parabolic Hermite operator.~Regularity properties},
arXiv:1708.02788 (2017), 26pp.

\bibitem{DeNapoli-Stinga} P.~L.~De N\'apoli and P.~R.~Stinga,
{Fractional Laplacians on the sphere, the Minakshisundaram zeta function and semigroups},
\textit{Contemp. Math.} (to appear).

\bibitem{DiBlasio-Volzone} G.~Di Blasio and B.~Volzone,
{Comparison and regularity results for the fractional Laplacian via symmetrization methods}
\textit{J. Differential Equations}
\textbf{253} (2012), 2593--2615.

\bibitem{Duo} J.~Duoandikoetxea,
\textit{Fourier Analysis},
Graduate Studies in Mathematics \textbf{29},
American Mathematical Society,
Providence, RI, 2001.

\bibitem{Duvaut-Lions} G.~Duvaut and J.-L.~Lions,
\textit{Inequalities in Mechanics and Physics},
translated from the French by C. W. John,
Grundlehren der Mathematischen Wissenschaften \textbf{219},
Springer-Verlag,
Berlin-New York, 1976.

\bibitem{Eidelman} S.~Eidelman, S.~Ivasyshen and A.~N.~Kochubei,
\textit{Analytic Methods in the Theory of Differential and Pseudo-differential Equations of Parabolic Type},
in: Operator Theory: Advances and Applications \textbf{152},
Birkh\"auser Verlag,
Basel, 2004.

\bibitem{Fabes} E.~B.~Fabes, C.~E.~Kenig and R.~Serapioni,
{The local regularity of solutions of degenerate elliptic equations},
\textit{Comm.~Partial Differential Equations}
\textbf{7} (1982), 77--116.

\bibitem{Feo-Stinga-Volzone} F.~Feo, P.~R.~Stinga and B.~Volzone,
{The fractional nonlocal Ornstein--Uhlenbeck equation, Gaussian symmetrization and regularity},
\textit{Discrete Contin.~Dyn.~Syst.}
\textbf{38} (2018), 3269--3298.

\bibitem{Ferrari-Franchi} F.~Ferrari and B.~Franchi,
{Harnack inequality for fractional sub-Laplacians in Carnot groups}
\textit{Math.~Z.}
\textbf{279} (2015), 435--458.

\bibitem{Gale-Miana-Stinga} J.~E.~Gal\'e, P.~J.~Miana and P.~R.~Stinga,
{Extension problem and fractional operators: semigroups and wave equations},
\textit{J.~Evol.~Equ.}
\textbf{13} (2013), 343--368.

\bibitem{Garcia} N.~Garc\'ia Trillos and D.~Sanz-Alonso,
{The Bayesian formulation and well-posedness of fractional elliptic inverse problems},
\textit{Inverse Problems}
\textbf{33} (2017), 065006, 23 pp.

\bibitem{Gatto-Segovia-Vagi} A.~E.~Gatto, C.~Segovia and S.~V\'agi,
{On fractional differentiation and integration on spaces of homogeneous type},
\textit{Rev.~Mat.~Iberoamericana}
\textbf{12} (1996), 111--145.

\bibitem{Gatto-Urbina}  A.~E.~Gatto and W.~Urbina,
{On Gaussian Lipschitz spaces and the boundedness of fractional integrals and fractional derivatives on them},
\textit{Quaest.~Math.}
\textbf{38} (2015), 1--25.

\bibitem{Ghosh} T.~Ghosh, Y.-H.~Lin and J.~Xiao,
{The Calder\'on problem for variable coefficients nonlocal elliptic operators},
\textit{Comm. Partial Differential Equations}
\textbf{42} (2017), 1923--1961.

\bibitem{Grubb-Hormander} G. Grubb,
{Fractional Laplacians on domains, a development of H\"ormander's theory of $\mu$-transmission pseudodifferential operators},
\textit{Adv. Math.}
\textbf{268} (2015), 478--528.

\bibitem{Grubb-regularity} G. Grubb,
{Regularity of spectral fractional Dirichlet and Neumann problems},
\textit{Math. Nachr.}
\textbf{289} (2016), 831--844.

\bibitem{Jacob1} N.~Jacob,
\textit{Pseudo Differential Operators and Markov Processes. Vol. I. Fourier Analysis and Semigroups},
Imperial College Press,
London, 2001.

\bibitem{Jacob2} N.~Jacob,
\textit{Pseudo Differential Operators and Markov Processes. Vol. II. Generators and Their Potential Theory},
Imperial College Press,
London, 2002.

\bibitem{Jacob3} N.~Jacob,
\textit{Pseudo Differential Operators and Markov Processes. Vol. III. Markov Processes and Applications},
Imperial College Press,
London, 2005.

\bibitem{JS} N.~Jacob and R.~Schilling,
{L\'evy-type processes and pseudodifferential operators},
in: \textit{L\'evy Processes}, 139--168,
Birkh\"auser Boston,
Boston, MA, 2001.

\bibitem{Jacob-Schilling} N.~Jacob and R.~Schilling,
{Subordination in the sense of Bochner--an approach through pseudo differential operators},
\textit{Math. Nachr.}
\textbf{178} (1996), 199--231.

\bibitem{Kassmann} M.~Kassmann,
{A new formulation of Harnack's inequality for nonlocal operators},
\textit{C. R. Math. Acad. Sci. Paris}
\textbf{349} (2011), 637--640.

\bibitem{Kato} T.~Kato,
{Fractional powers of dissipative operators},
\textit{J. Math. Soc. Japan}
\textbf{13} (1961), 246--274.

\bibitem{Kemppainen-Sjogren-Torrea} M.~Kemppainen, P.~Sj\"ogren and J.~L.~Torrea,
{Wave extension problem for the fractional Laplacian},
\textit{Discrete Contin.~Dyn.~Syst.}
\textbf{35} (2015), 4905--4929. 

\bibitem{Komatsu} H.~Komatsu,
{Fractional powers of operators},
\textit{Pacific J. Math.}
\textbf{19} (1966), 285--346.

\bibitem{Landkof} N.~S.~Landkof,
\textit{Foundations of Modern Potential Theory},
translated from the Russian by A.~P.~Doohovskoy,
Die Grundlehren der mathematischen Wissenschaften, Band 180,
Springer-Verlag, New York-Heidelberg, 1972.

\bibitem{Laskin 1} N. Laskin,
{Fractional quantum mechanics},
\textit{Phys. Rev. E}
\textbf{62} (2000), 3135--3145.

\bibitem{Laskin 2} N. Laskin,
{Fractional Schr\"odinger equation},
\textit{Phys. Rev. E}
\textbf{66} (2002), 056108-1--7.

\bibitem{Lebedev} N.~N.~Lebedev,
\textit{Special Functions and Their Applications},
Prentice-Hall, Inc.,
Englewood Cliffs, N. J., 1965.

\bibitem{Lions-Magenes} J.-L.~Lions and E.~Magenes,
\textit{Probl\`emes aux Limites Non Homog\`enes et Applications. Vol. 1},
Travaux et Recherches Math\'ematiques \textbf{17}
Dunod,
Paris, 1968.

\bibitem{Liu-Sjogren} L.~Liu and P.~Sj\"ogren,
{A characterization of the Gaussian Lipschitz space and sharp estimates for the Ornstein-Uhlenbeck Poisson kernel}
\textit{Rev. Mat. Iberoam.}
\textbf{32} (2016), 1189--1210.

\bibitem{Ma-Stinga-Torrea-Zhang} T.~Ma, P.~R.~Stinga, J.~L.~Torrea and C.~Zhang,
{Regularity properties of Schr\"odinger operators},
\textit{J.~Math.~Anal.~Appl.}
\textbf{388} (2012), 817--837. 

\bibitem{Maldonado-Stinga} D.~Maldonado and P.~R.~Stinga,
{Harnack inequality for the fractional nonlocal linearized Monge--Amp\`ere equation},
\textit{Calc.~Var.~Partial Differential Equations}
\textbf{56} (2017), Art.~103, 45 pp.

\bibitem{Metzler-Klafter} R.~Metzler and J.~Klafter,
{The random walk's guide to anomalous diffusion: a fractional dynamics approach},
\textit{Phys. Rep.}
\textbf{339} (2000), 77pp.

\bibitem{Molchanov-Ostrovskii} S.~A.~Mol\v{c}anov and E.~Ostrovski\u{\i},
{Symmetric stable processes as traces of degenerate diffusion processes} (Russian),
\textit{Teor.~Verojatnost.~i Primenen.}
\textbf{14} (1969), 127--130.

\bibitem{Nochetto} R.~H.~Nochetto, E.~Ot\'arola and A.~Salgado,
{A PDE approach to fractional diffusion in general domains: a priori error analysis},
\textit{Found.~Comput.~Math.}
\textbf{15} (2015), 733--791. 

\bibitem{Novaga} M.~Novaga, D.~Pallara and Y.~Sire,
{A symmetry result for degenerate elliptic equations on the Wiener space with nonlinear boundary conditions and applications},
\textit{Discrete Contin.~Dyn.~Syst.~Ser.~S}
\textbf{9} (2016), 815--831.

\bibitem{Petroni-Pusterla} N. C. Petroni and M. Pusterla,
{L\'evy processes and Schr\"odinger equation},
\textit{Physica A}
\textbf{388} (2009), 824--836.

\bibitem{Roncal-Stinga} L.~Roncal and P.~R.~Stinga,
{Fractional Laplacian on the torus},
\textit{Commun.~Contemp.~Math.}
\textbf{18} (2016), 1550033, 26 pp.

\bibitem{Samko-hyper} S.~G.~Samko,
\textit{Hypersingular Integrals and Their Applications},
in: Analytical Methods and Special Functions \textbf{5},
Taylor \& Francis, Ltd.,
London, 2002.

\bibitem{Samko} S.~G.~Samko, A.~A.~Kilbas and O.~I.~Marichev,
\textit{Fractional Integrals and Derivatives: Theory and Applications},
translated from the 1987 Russian original,
Gordon and Breach Sci.~Pub., Yverdon, 1993.

\bibitem{Schilling} R.~Schilling,
{Subordination in the sense of Bochner and a related functional calculus},
\textit{J. Austral. Math. Soc. Ser. A}
\textbf{64} (1998), 368--396.

\bibitem{SSV} R.~Schilling, R.~Song and Z.~Vondra\v{c}ek,
\textit{Bernstein Functions. Theory and Applications. Second Edition},
De Gruyter Studies in Mathematics \textbf{37},
Walter de Gruyter \& Co.,
Berlin, 2012.

\bibitem{Seeley} R.~T.~Seeley,
{Complex powers of an elliptic operator},
in: \textit{Singular Integrals (Proc. Sympos. Pure Math., Chicago, Ill., 1966)}, 288--307,
Amer. Math. Soc.,
Providence, R.I, 1967.

\bibitem{Shlesinger-Zaslavsky-Klafter} M. F. Shlesinger, G. M. Zaslavsky and J. Klafter,
{Strange kinetics},
\textit{Nature}
\textbf{363} (1993), 31--37.

\bibitem{Signorini} A.~Signorini,
{Questioni di elasticit\`a non linearizzata e semilinearizzata},
\textit{Rendiconti di Matematica e delle sue applicazioni}
\textbf{18} (1959), 95--139.

\bibitem{Silvestre Thesis} L.~Silvestre,
{PhD thesis}, \textit{The University of Texas at Austin}, 2005.

\bibitem{Silvestre CPAM} L.~Silvestre,
{Regularity of the obstacle problem for a fractional power of the Laplace operator},
\textit{Comm.~Pure Appl.~Math.}
\textbf{60} (2007), 67--112.

\bibitem{Sokolov-Klafter-Blumen} I. M. Sokolov, J. Klafter and A. Blumen,
{Fractional kinetics},
\textit{Physics Today}
November 2002, 48--54.

\bibitem{Song-Vondracek} R.~Song and Z.~Vondra\v{c}ek,
{Potential theory of subordinate killed Brownian motion in a domain},
\textit{Probab. Theory Relat. Fields}
\textbf{125} (2003), 578--592.

\bibitem{Stein-rojo} E.~M.~Stein,
\textit{Topics in Harmonic Analysis Related to the Littlewood-Paley Theory},
Annals of Mathematics Studies \textbf{63},
Princeton University Press,
Princeton, N.J.; University of Tokyo Press, Tokyo 1970.

\bibitem{Stein} E.~M.~Stein,
\textit{Singular Integrals and Differentiability Properties of Functions},
{Princeton Mathematical Series} \textbf{30},
Princeton Univ.~Press,
Princeton, NJ, 1970.

\bibitem{Stinga} P.~R.~Stinga,
{PhD thesis}, \textit{Universidad Aut\'onoma de Madrid}, 2010.

\bibitem{Stinga-Torrea CPDE} P.~R.~Stinga and J.~L.~Torrea,
{Extension problem and Harnack's inequality for some fractional operators},
\textit{Comm.~Partial Differential Equations}
\textbf{35} (2010), 2092--2122.

\bibitem{Stinga-Torrea-SIAM} P.~R.~Stinga and J.~L.~Torrea,
{Regularity theory and extension problem for fractional nonlocal parabolic equations and the master equation},
\textit{SIAM J.~Math.~Anal.}
\textbf{49} (2017), 3893--3924. 

\bibitem{Stinga-Torrea JFA} P.~R.~Stinga and J.~L.~Torrea,
{Regularity theory for the fractional harmonic oscillator},
\textit{J.~Funct.~Anal.}
\textbf{260} (2011), 3097--3131.

\bibitem{Stinga-Volzone} P.~R.~Stinga and B.~Volzone,
{Fractional semilinear Neumann problems arising from a fractional Keller--Segel model},
\textit{Calc.~Var.~Partial Differential Equations}
\textbf{54} (2015), 1009--1042.

\bibitem{Stinga-Zhang} P.~R.~Stinga and C.~Zhang,
{Harnack's inequality for fractional nonlocal equations},
\textit{Discrete Contin.~Dyn.~Syst.}
\textbf{33} (2013), 3153--3170.

\bibitem{Taibleson} M.~H.~Taibleson,
{On the theory of Lipschitz spaces of distributions on Euclidean $n$-space.~I.~Principal properties},
\textit{J.~Math.~Mech.}
\textbf{13} (1964), 407--479.

\bibitem{Turesson} B.~O.~Turesson,
\textit{Nonlinear Potential Theory and Weighted Sobolev Spaces},
Lecture Notes in Mathematics \textbf{1736},
Springer-Verlag, Berlin, 2000.

\bibitem{Yang} R.~Yang,
{On higher order extensions for the fractional Laplacian},
arXiv:1302.4413 (2013), 14pp.

\bibitem{Yosida} K.~Yosida,
\textit{Functional Analysis},
Reprint of the sixth (1980) edition,
Classics in Mathematics, Springer-Verlag, Berlin, 1995.

\bibitem{Yu} H.~Yu,
{Unique continuation for fractional orders of elliptic equations},
\textit{Ann.~PDE}
\textbf{3} (2017), Art.~16, 21 pp.

\bibitem{Zaslavsky} G. M. Zaslavsky,
{Chaos, fractional kinetics, and anomalous transport},
\textit{Phys. Rep.}
\textbf{371} (2002), 461--580.

\bibitem{Zygmund} A.~Zygmund,
{Smooth functions},
\textit{Duke Math. J.}
\textbf{12} (1945), 47--76.


\end{thebibliography}
\end{document}